\documentclass{amsart}
\usepackage{mathrsfs}
\usepackage{amssymb,latexsym,amsmath,amsthm,enumitem,mathrsfs}
\usepackage{lmodern}
\usepackage{hyperref}
\usepackage{color}
\usepackage{stmaryrd}
\usepackage{mathrsfs}
\usepackage{lineno}
\usepackage{bbm}
\usepackage{scalerel,stackengine}
\usepackage{tikz}
\usepackage{hyperref}
\usepackage{enumitem}
\usepackage{esint}% See geometry.pdf to learn the layout options. There are lots.
\usepackage{graphicx}
\usepackage{amssymb}
\newtheorem{ingredient}{Ingredient}
\newtheorem{theorem}{Theorem}[section]

\newtheorem{lemma}[theorem]{Lemma}

\newtheorem{remark}[theorem]{Remark}
\newtheorem{proposition}[theorem]{Proposition}

\newtheorem{definition}[theorem]{Definition}
\newtheorem{claim}{Claim}

\newtheorem{corollary}[theorem]{Corollary}

\theoremstyle{definition}

\newcommand{\cD}{\mathcal{D}}

\newcommand{\bF}{\mathbf{F}}

\newcommand{\Q}{\mathbb Q}
\newcommand{\Z}{\mathbb Z}
\newcommand{\C}{\mathbb C}
\newcommand{\N}{\mathbb N}
\newcommand{\T}{\mathbb T}

\newcommand{\e}{\varepsilon}

\def\set4{\mathcal I}
\def\tup14{(1,2,3,4)}

\newcommand{\supp}{\mathrm{supp}}

\newcommand{\R}{\mathbb{R}}

\newcommand{\de}{\delta} %%%%%\delta
\newcommand{\De}{\Delta} %%%%%\delta
\newcommand{\ga}{\gamma}

\newcommand{\wh}{\widehat}
\newcommand{\wt}{\widetilde}
\newcommand{\si}{\sigma}
\newcommand{\Si}{\Sigma}

\newcommand{\I}{\mathbb I}
\newcommand{\dist}{\textup{dist}}

\newcommand{\Id}{\boldsymbol 1}

\newcommand{\al}{\alpha}

\newcommand{\gd}{\textup{good}}
\newcommand{\bd}{\textup{bad}}
\newcommand{\om}{\omega}

\newcommand{\RD}{\textup{RapDec}}
\newcommand{\dimH}{\dim_{\textup{H}}}
\newcommand{\dimP}{\dim_{\textup{P}}}
\newcommand{\dimB}{\overline{\dim_{\textup{B}}}}
\newcommand{\io}{\iota}
\newcommand{\err}{\textup{error}}
\newcommand{\BR}{{\rm{Br}}}

\begin{document}

 \author{Shengwen Gan}
 \address{Department of Mathematics\\
 Sun Yat-sen University, Guangzhou, 510275, China}
  \email{shengwengan2018@gmail.com}

\author{Bochen Liu}
\address{Department of Mathematics \& International Center for Mathematics, Southern University of Science and Technology, Shenzhen 518055, China}
\email{Bochen.Liu1989@gmail.com}

  \author{Shukun Wu}
 \address{Department of Mathematics, Indiana University Bloomington, USA}
  \email{shukwu@iu.edu}

\keywords{Falconer's distance problem, Hausdorff dimension, packing dimension}
\subjclass[2010]{51E30, 28A75}

\date{}
\thanks{Bochen Liu was partially supported by the National Key R\&D Program of China 2024YFA1015400. The draft of this paper was completed in March 2026. The authors acknowledge the use of generative AI tools for proof checking and typo correction.}

\title[Falconer's distance set problem]{Lebesgue measure of distance sets and $L^2$ ball inflation}

\begin{abstract}
We give an alternative proof of the $5/4$ result of Guth-Iosevich-Ou-Wang on the Falconer distance conjecture: if a compact set $E$ in the plane has Hausdorff dimension $>5/4$, then there exists $x\in E$ such that the pinned distance set
$$\Delta_x(E):=\{|x-y|: y\in E\} $$
has positive Lebesgue measure. Moreover, we are able to beat the $5/4$ barrier if the packing dimension of $E$ is $<2$. More generally, we obtain sufficient conditions involving both the Hausdorff and packing dimensions of $E$. These conditions improve the $5/4$ threshold when the packing dimension of $E$ is $<2$, and recover the second author's result for $\dimH E=\dimP E>1$. The proof is based on $L^2$ ball inflation and does not use decoupling inequalities.
\end{abstract}

\maketitle

\section{Introduction}
For a set $E\subset \R^2$, define its distance set to be
\[ \De(E):=\{|x-x'|:x,x'\in E\}. \]
We also define the pinned distance set:
For $p\in \R^2$, define
\[ \De_p(E):=\{|x-p|:x\in E\}. \]
Let $\dimH$ denote the Hausdorff dimension, and $\dimP$ denote the packing dimension.

The goal of this paper is to prove the following result.

\begin{theorem}\label{maintheorem}
    Let $E\subset\R^2$ be a compact set and let $\al=\dimH(E)>1, \beta=\dimP(E)$. If 
    \begin{equation}\label{cond1}
        \al>\frac32-\frac{1}{2\beta}
    \end{equation} and 
    \begin{equation}\label{cond2}
        P(\al,\beta):=\al^3-(2\beta-1)\al^2-2\al+(3\beta-1)<0,
    \end{equation}
    then there exists a point $x\in E$ such that the pinned distance set $\{ |x-y| \}_{y\in E}$ has positive Lebesgue measure.
\end{theorem}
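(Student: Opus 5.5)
We follow the scheme of Guth--Iosevich--Ou--Wang: reduce to an $L^2$ estimate for a good part of the pinned distance measure, and prove it with an $L^2$ ball inflation inequality in place of decoupling.

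\textbf{Setup.} Since $\dimH E=\al>1$, Frostman's lemma gives two separated pieces $E_1,E_2\subset E$ carrying $\al'$-dimensional measures $\mu_1,\mu_2$ with $\al'<\al$ close to $\al$. Because $\dimP E=\beta$, after passing to subsets we may also assume an upper box-counting bound: at every scale $\delta$, $E_j$ is covered by $\lesssim\delta^{-\beta'}$ balls of radius $\delta$, with $\beta'>\beta$ close to $\beta$. This uses the standard fact that packing dimension is the regularized upper box dimension, together with a Baire category argument to localize; the Frostman measures are then restricted to the localized pieces. The upper box-counting bound is what lets us exploit $\beta<2$.

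\textbf{Good/bad decomposition.} Decompose $\mu_1$ dyadically in frequency, $|\xi|\sim R_j=2^j$. At each scale cover the annulus by $R_j^{1/2+\e}\times R_j$ tubes through $E_2$. Call a tube bad if $\mu_2(T)\ge R_j^{-\gamma}$, where the threshold $\gamma$ will be optimized later. Set $\mu_{1,g}=\sum_j\sum_{T\text{ good}}M_T\mu_1$. Using $\al'>1$ and the radial projection estimates for bad tubes, as in GIOW, show that $\|\mu_1-\mu_{1,g}\|$ restricted to most pins is small. Then the pushforwards satisfy $d^x_*\mu_{1,g}\approx d^x_*\mu_1$ in $L^1$ for $x$ in a set of positive $\mu_2$-measure. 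Positivity of Lebesgue measure then follows from
\[\int\|d^x_*\mu_{1,g}\|_{L^2}^2\,d\mu_2(x)<\infty.\]

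\textbf{Main $L^2$ estimate.} By the standard Liu/GIOW identity, $\|d^x_*\mu_{1,g}\|_2^2$ at frequency $R$ is controlled by averages of $|\widehat{\mu_{1,g}}|^2$ over circles of radius $R$ centred on $x$. These average over $x\in E_2$ to
\[\int |\mu_{1,g,R}|^2\,d\mu_2\]
plus a tail term. We bound this using the wave packet decomposition of $\mu_{1,g,R}$ into good tubes. Each $R^{-1/2}$-ball meets wave packets in many directions, and $L^2$ ball inflation passes from $R^{-1/2}$-balls to $R^{-1/2+\e'}$-balls in an intermediate scale. It bounds the square function on balls by the $L^2$ norm times a multiplicity factor. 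That factor is controlled by two inputs: the good-tube condition $\mu_2(T)<R^{-\gamma}$, and the covering number of $E_2$ at the intermediate scale, given by $\beta'$. An induction on scales or a two-scale argument gives
\[\int|\mu_{1,g,R}|^2\,d\mu_2\lesssim R^{-\eta(\al,\beta,\gamma)}\,\|\mu_{1,R}\|_2^2,\qquad \|\mu_{1,R}\|_2^2\lesssim R^{2-\al'}.\]
Summing over $R$ then requires an exponent inequality.

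\textbf{Main obstacle.} The hard step is the ball inflation bound with its two-scale bookkeeping. One must choose the intermediate scale $R^{\kappa}$ and the threshold $\gamma$ so that the bad-tube part is negligible, which needs roughly $\gamma$ small relative to $\al-1$, while the good-part exponent $\eta$ beats $2-\al$. Optimizing over $\kappa$ and $\gamma$ should yield exactly the two constraints. The linear one, $\al>\tfrac32-\tfrac1{2\beta}$, comes from the endpoint of the scale range. The cubic $P(\al,\beta)<0$ comes from the interior optimum, where eliminating $\kappa$ gives a quadratic-in-$\al$ relation multiplied through by $\al$. I would first run the argument with $\beta=2$ and check that it reproduces $5/4$ as a sanity check. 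Then I would track $\beta$ through the covering bound, verifying in particular that $\al=\beta>1$ satisfies both conditions.
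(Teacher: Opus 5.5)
Your outer framework matches the paper's, but the step that carries the whole theorem, the good-part estimate, is only asserted, and the mechanism you describe would not supply it. The framework pieces are the separated Frostman pieces, localization via the modified upper box dimension, a good/bad split of the wave packets of $\mu_1$ with the bad part controlled by Orponen's radial projection theorem, Liu's $L^2$ identity, and the energy bound $\int_{|\xi|\sim R}|\wh\mu_1|^2\lesssim R^{2-\al}$. The gap is the sentence ``an induction on scales or a two-scale argument gives $R^{-\eta(\al,\beta,\gamma)}$,'' which has no inequality behind it.

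A single-scale good-tube condition on $R^{-1/2}\times 1$ tubes does not control the conditional quantities $\nu(T')/\nu(\wt Q)$. Here $T'$ is an $r\times\rho$ tube inside a $\rho$-cube $\wt Q$ of the pin set, at intermediate scales. That conditional control is exactly what an $L^2$ inflation from $r$-cubes to $\rho$-cubes needs. (Your threshold is also mis-normalized: GIOW use $R^{-1/2+\gamma}$, and $R^{-\gamma}$ with small $\gamma$ is nearly vacuous.) In the paper, inflation runs across the whole range $[R^{-1},1]$ of pin scales, not a small step near $R^{-1/2}$.

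Nor can one intermediate scale chosen from $(\al,\beta)$ alone be expected to work. An inflation step is lossless only on an interval of scales where the pin measure is at least $1$-Frostman and the interval is admissible. Where such intervals sit depends on the set, whose branching profile is arbitrary subject to $\al x\le f(x)\le \beta x$.

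The missing ideas, in the paper's route, are these.
\begin{enumerate}
\item Pigeonhole $\nu$ into uniform pieces $\nu^*$, each with a branching function $f$.
\item Prove a structure lemma giving adaptive scales $R^{-1}<r_k<\rho_k<\dots<r_1<\rho_1\le 1$. On each $[a_j,b_j]$, $f$ is $(1+\e)$-Frostman and the interval is admissible, i.e.\ $R\rho_j\le (Rr_j)^2$. The gaps between intervals are negligible. The terminal point lies on the line $f(x)=(2\beta-1)x-\beta+1$; this is exactly where $f(x)\le\beta x$, i.e.\ the packing dimension, enters.
\item Define bad caps at every pair $(r_j,\rho_j)$ via the conditional measures $\nu^*_{\wt Q}$ on $r_j\times\rho_j$ tubes. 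Showing the bad set is small needs the $(1+\e)$-Frostman property, through a Cauchy--Schwarz/energy count, in addition to Orponen.
\item Inflate losslessly from $r_j$- to $\rho_j$-cubes using admissibility: local orthogonality over $r_j/\rho_j$-caps, and local constancy on $R^{-1/2}\rho_j^{1/2}\times\rho_j$ tubes.
\item Handle the range $[R^{-1},r_k]$ with a Katz--Tao weighted estimate $\|Ef\|_{L^2(X)}^2\lessapprox |X|^{\al/(\al+1)}\|f\|_2^2$, proved by a broad--narrow reduction and bilinear restriction. Your plan contains nothing playing this role.
\end{enumerate}

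The resulting exponent $1-x_k+\frac{f(x_k)-\al}{\al+1}$ is linear in the terminal scale $x_k$. Requiring it to be $<\al-1$ at both extreme positions gives exactly the hypotheses. The position $f(x_k)=\al$ yields $\al>\frac32-\frac{1}{2\beta}$. The position $f(x_k)=\al x_k$ yields $P(\al,\beta)<0$, with the factor $\al+1$ coming from the weighted estimate. So your prediction that the cubic comes from an interior optimum over an intermediate scale and a threshold does not match how the constraints actually arise.
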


As a corollary, we recover the result of Guth, Iosevich, Ou and Wang \cite{guth2020falconer}.
\begin{corollary}
    If $E\subset \R^2$ is compact and satisfies 
    \[ \dimH(E)>\frac54, \]
    then there exists a point $x\in E$ such that the pinned distance set $\{ |x-y| \}_{y\in E}$ has positive Lebesgue measure.
\end{corollary}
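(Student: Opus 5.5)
Since $\dimP E\le 2$ always, the plan is to deduce the corollary from Theorem \ref{maintheorem} by checking that $\al>5/4$ forces both \eqref{cond1} and \eqref{cond2} for every admissible $\beta\in[\al,2]$. Recall $\dimH E\le\dimP E$, so $\beta\ge\al$. Both conditions are monotone in $\beta$, so it suffices to verify them at $\beta=2$.

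For \eqref{cond1}, the right side $\frac32-\frac1{2\beta}$ is increasing in $\beta$. At $\beta=2$ it equals $\frac32-\frac14=\frac54$, so $\al>5/4$ gives $\al>\frac32-\frac1{2\beta}$ for all $\beta\le 2$.

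For \eqref{cond2}, write
\[ P(\al,\beta)=\al^3+\al^2-2\al-1+\beta(3-2\al^2). \]
The coefficient of $\beta$ is $3-2\al^2$.
\begin{itemize}
\item If $\al^2<3/2$, this coefficient is positive, so $P$ is increasing in $\beta$ and $P(\al,\beta)\le P(\al,2)$.
\item If $\al^2\ge 3/2$, then $P$ is nonincreasing in $\beta$, and since $\beta\ge\al$ we get $P(\al,\beta)\le P(\al,\al)$.
\end{itemize}

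Case $\al^2<3/2$. Here
\[ P(\al,2)=\al^3-3\al^2-2\al+5. \]
At $\al=5/4$ this equals $\frac{125}{64}-\frac{300}{64}-\frac{160}{64}+\frac{320}{64}=-\frac{15}{64}<0$. Its derivative $3\al^2-6\al-2$ is negative on $[1,2]$, so $P(\al,2)<0$ for all $\al\in(5/4,2]$.

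Case $\al^2\ge 3/2$. Here
\[ P(\al,\al)=-\al^3+\al^2+\al-1=-(\al-1)^2(\al+1), \]
which is $<0$ for $\al>1$.

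So in both cases $P(\al,\beta)<0$ whenever $5/4<\al\le\beta\le2$, and Theorem \ref{maintheorem} gives the conclusion. The argument is purely algebraic and involves no real obstacle; the only point needing care is the sign split on $3-2\al^2$.
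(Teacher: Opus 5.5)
Your proof is correct and follows the paper's route: you get \eqref{cond1} from $\beta\le 2$, and \eqref{cond2} by monotonicity in $\beta$, reducing to $P(\al,\al)=-(\al-1)^2(\al+1)<0$. Your case $\al^2<3/2$ is vacuous, because $\al>5/4$ gives $\al^2>25/16>3/2$, so the coefficient $3-2\al^2$ is always negative. For the same reason, your opening remark that it suffices to check $\beta=2$ is inaccurate for \eqref{cond2}: the extremal case there is $\beta=\al$, which your second case correctly handles.
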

\begin{proof}
Denote $\al:=\dimH(E)>\frac54, \beta:=\dimP(E)$.
    Since $\beta\le 2$, we have
    \[ \al>\frac54\ge \frac32-\frac{1}{2\beta}. \]
Noting that the coefficient of $\beta$ in $P(\al,\beta)$ is $3-2\al^2<0$ and $\beta\in [\al,2]$, we have
\[ P(\al,\beta)\le P(\al,\al)=-\al^3+\al^2+\al-1=-(\al-1)^2(\al+1)<0. \] 
By Theorem \ref{maintheorem}, the corollary holds.
\end{proof}
\begin{remark}
    {\rm
    It is surprising to us that our proof does not use decoupling, but nevertheless reaches the ``$\frac54$-threshold". 
    }
\end{remark}

We also recover the equal Hausdorff/packing dimension case proved by the second author \cite{liu2026lebesgue}.

\begin{corollary}
    If $E\subset \R^2$ is compact and satisfies \[\dimH(E)=\dimP(E)>1,\] then there exists a point $x\in E$ such that the pinned distance set $\{ |x-y| \}_{y\in E}$ has positive Lebesgue measure.
\end{corollary}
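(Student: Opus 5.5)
Set $\al=\beta=\dimH(E)=\dimP(E)>1$. The plan is to check that this pair satisfies both hypotheses \eqref{cond1} and \eqref{cond2} of Theorem \ref{maintheorem}, and then apply the theorem directly, exactly as in the proof of the previous corollary.

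\textbf{Condition \eqref{cond1}.} With $\beta=\al$ we need
\[ \al>\frac32-\frac1{2\al}. \]
Multiplying by $2\al>0$, this is equivalent to $2\al^2-3\al+1>0$, that is, $(2\al-1)(\al-1)>0$. Both factors are positive because $\al>1$, so the inequality holds.

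\textbf{Condition \eqref{cond2}.} The previous proof already computed
\[ P(\al,\al)=-\al^3+\al^2+\al-1=-(\al-1)^2(\al+1). \]
Since $\al>1$, we have $(\al-1)^2>0$ and $\al+1>0$, hence $P(\al,\al)<0$.

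Both hypotheses therefore hold, and Theorem \ref{maintheorem} gives a point $x\in E$ whose pinned distance set has positive Lebesgue measure. Nothing here is a real obstacle; the only thing to keep in view is that \eqref{cond1} must be checked on its own, because the bound $\al>5/4$ used in the previous corollary is no longer available.
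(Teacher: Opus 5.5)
Your proof is correct and is essentially the paper's argument: you verify \eqref{cond1} through $(2\al-1)(\al-1)>0$ and \eqref{cond2} through $P(\al,\al)=-(\al-1)^2(\al+1)<0$, then apply Theorem \ref{maintheorem}. The only difference is that you write out the factorization behind \eqref{cond1}, which the paper simply asserts.
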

\begin{proof}
    Denote $\al:=\dimH(E)=\dimP(E)>1$. Then $\al>\frac32-\frac{1}{2\al}$, and $P(\al,\al)=-(\al-1)^2(\al+1)<0$. By Theorem \ref{maintheorem}, the corollary holds.
\end{proof}

If one wants to improve the $\frac54$-threshold, one observes a specific structure in the set $E$:
\begin{corollary}
    If $E\subset \R^2$ is a compact set satisfying $\dimH(E)=\frac54 $
    and $|\De_x(E)|=0$ for all $x\in E$, then $\dimP(E)=2$.
\end{corollary}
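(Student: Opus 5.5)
We argue by contraposition using Theorem \ref{maintheorem}. Suppose $E\subset\R^2$ is compact with $\al:=\dimH(E)=\frac54$ and $\beta:=\dimP(E)<2$. We will check that both hypotheses \eqref{cond1} and \eqref{cond2} hold at $\al=\frac54$. Theorem \ref{maintheorem} then gives a point $x\in E$ with $|\De_x(E)|>0$, which contradicts the assumption that $|\De_x(E)|=0$ for all $x\in E$. Hence $\dimP(E)=2$.

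\emph{Condition \eqref{cond1}.} At $\al=\frac54$ it reads $\frac54>\frac32-\frac1{2\beta}$, which is equivalent to $\frac{1}{2\beta}>\frac14$, that is, $\beta<2$. This is exactly our assumption.

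\emph{Condition \eqref{cond2}.} We expand $P(\frac54,\beta)$ as a linear function of $\beta$.
\begin{itemize}
\item The coefficient of $\beta$ is $-2\al^2+3=-\frac{50}{16}+3=-\frac18$.
\item The constant term is $\al^3+\al^2-2\al-1=\frac{125}{64}+\frac{100}{64}-\frac{160}{64}-\frac{64}{64}=\frac1{64}$.
\end{itemize}
Thus $P(\frac54,\beta)=\frac1{64}-\frac{\beta}{8}$, which is negative exactly when $\beta>\frac18$. Since packing dimension always dominates Hausdorff dimension, $\beta\ge\al=\frac54>\frac18$, so \eqref{cond2} holds.

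Both conditions are verified, so the contradiction described above completes the argument. There is no real obstacle here: the computation is elementary, and the only point needing care is that \eqref{cond1} becomes an equivalence with $\beta<2$ precisely at $\al=\frac54$. This is why $\dimP(E)=2$ is forced.
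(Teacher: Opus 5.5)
Your argument is correct and is essentially the paper's proof: argue by contradiction with Theorem~\ref{maintheorem}, observe that at $\al=\frac54$ condition \eqref{cond1} is exactly $\beta<2$, and verify \eqref{cond2} using $\beta\ge\al$. The only cosmetic difference is that the paper bounds $P(\frac54,\beta)\le P(\frac54,\frac54)<0$ via the negative $\beta$-coefficient, whereas you compute $P(\frac54,\beta)=\frac1{64}-\frac{\beta}{8}$ explicitly; your arithmetic checks out.
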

\begin{proof}
    Suppose $\beta:=\dimP(E)<2$, then $\al:=\dimH(E)=\frac54>\frac32-\frac{1}{2\beta}$. By Theorem \ref{maintheorem}, we must have
    \[ P(\frac54,\beta)\ge 0. \]
    However, $P(\frac54,\beta)\le P(\frac54,\frac54)<0$, which is a contradiction.
\end{proof}

\subsection{Historical results}

Falconer \cite{falconer1985hausdorff} proved that if $E\subset\mathbb R^d$ is compact and $\dim_{\mathrm
H}(E)>(d+1)/2$, then $|\Delta(E)|>0$. He conjectured that the conclusion should hold under the hypothesis
$\dim_{\mathrm H}(E)>d/2$. In the plane, Wolff \cite{wolff1999decay} proved the result for $\dim_{\mathrm H}(E)>4/3$.
In higher dimensions, Erdogan \cite{erdogan2005bilinear} proved it for $\dim_{\mathrm H}(E)>d/2+1/3$. Later, Du, Guth,
Ou, Wang, Wilson, and Zhang \cite{du2021weighted}, and Du and Zhang \cite{MR3961084}, developed weighted $L^2$
restriction estimates and used them to obtain further progress on Falconer's distance problem.

In 2019, the second author \cite{liu20192} proposed an $L^2$ approach to the pinned distance set problem. A
breakthrough was subsequently obtained by Guth, Iosevich, Ou, and Wang \cite{guth2020falconer}, who proved that if
$E\subset\mathbb R^2$ is compact and $\dim_{\mathrm H}(E)>5/4$, then $|\Delta_x(E)|>0$ for some $x\in E$. Their proof
introduced the good-bad decomposition and applied decoupling inequalities to the distance set problem. This remains
the best known threshold in dimension two.

The other direction on Falconer's distance problem is to give a lower bound on the Hausdorff dimension of $\De(E), \De_x(E)$ assuming $\dim_H (E)>\frac{d}{2}$. 
In the plane, the best record was obtained by Stull \cite{stull2022pinned}, who used computability theory to show that
$\dimH(E)>1$ implies $\dimH(\De_x(E))\ge \frac34$ for some $x\in E$. Also, see the work of Katz and Tao \cite{katz2001some}, Bourgain \cite{bourgain2003erdHos}, Wang and Zheng \cite{wang2022improvement}, Shmerkin \cite{shmerkin2023non}, Keleti and Shmerkin \cite{keleti2019new}, Shmerkin and Wang \cite{shmerkin2025distance}, Fiedler and Stull \cite{fiedler2023dimension} \cite{fiedler2024pinned}. In \cite{fiedler2023dimension}, Fiedler and Stull studied the distance problem taking into account both Hausdorff dimension and packing dimension. They proved that for $E\subset \R^2$ with $\al=\dimH(E)>1, \beta=\dimP(E)$, if $\beta<\frac{(3+\sqrt{5})\al-1-\sqrt{5}}{2}$, then $\dim(\De_x(E))=1$ for some $x\in E$. Comparing with Theorem \ref{maintheorem}, on the one hand, our conclusion is $|\De_x(E)|>0$; on the other hand, we obtain some new range for the pair $(\al,\beta)$ (for example, $(\al,\beta)=(\frac54,2-\e)$ is in our range but not in their range).

The distance set problem is also studied under some regularity assumption. See the work of Orponen \cite{orponen2012distance} \cite{orponen2017distance}, Shmerkin \cite{shmerkin2019hausdorff} \cite{shmerkin2021improved}.
Very recently, the second author \cite{liu2026lebesgue} proved that  $\dimH(E)=\dimP(E)>1$ implies $|\De_x(E)|>0$ for some $x\in E$. He introduced a multiscale good-bad decomposition and a multiscale Mizohata-Takeuchi-type estimate.

\subsection{Structure of the paper}
In Section~\ref{sec2}, we sketch the proof of the main theorem. In Section~\ref{sec3}, we introduce the terminology
used throughout the paper. In Section~\ref{sec4}, we prove a lemma for Lipschitz functions. In Section~\ref{sec5}, we
reduce Theorem~\ref{maintheorem} to Proposition~\ref{reducedprop}. In Section~\ref{sec6}, we perform the multiscale
good-bad decomposition and estimate the bad part. In Section~\ref{sec7}, we estimate the good part. In
Section~\ref{sec8}, we prove Theorem~\ref{maintheorem}. Finally, in Section~\ref{secweightedL2}, we prove the weighted
$L^2$ estimate.

\subsection{Notation} We use $A\lesssim B$ to denote $A\le CB$, where $C$ is some universal constant. If $A\lesssim B$ and $B\lesssim A$, then we denote it by $A\sim B$. For a fixed scale $R\ge 1$, we use $A\lessapprox B$ to denote that $A\le C_\ga R^\ga B$ for any $\ga>0$. Similarly, we use $A\approx B$ to denote $A\lessapprox B$ and $B\lessapprox A$. For $R\ge 1$, we use $\RD(R)$ to denote a quantity that $\lesssim C_N R^{-N}$ for any $N\ge 1$.

% \subsection{Acknowledgement.}

\section{An overview of the proof}\label{sec2}
\subsection{Weighted \texorpdfstring{$L^2$}{} estimate}
The key task in the paper is to prove a weighted $L^2$ estimate, which we formulate as follows. 

Fix $u\in [1,2]$. Let $X\subset B(0,1)$ be a union of disjoint $R^{-1}$-balls. We assume $X$ satisfies:
\begin{equation}\label{moraldimH}
\begin{split}
    &|X\cap B(x,r)|\lesssim R^{-2}(rR)^u \ \ \ \textup{for~any~}x\in B(0,1)\textup{~and~}R^{-1}\le r\le 1,\\
    &|X|= R^{-2}R^u.
\end{split}
\end{equation}
(Here, $|E|$ is the Lebesgue measure of the set $E$.)
Suppose $F:\R^2\rightarrow \mathbb C$ is any function with $\supp \wh F\subset S^1_R$, where $S^1_R$ is the circle of radius $R$. We are interested in finding the smallest number $D(u)$ such that
\begin{equation}\label{keyest}
    \frac{1}{|X|}\|F\|_{L^2(X)}^2\lessapprox R^{D(u)} \|F\|_{L^2(B(0,1))}^2
\end{equation}
holds for all $R\ge 1$ and $X, F$ as above. We use $\bF(u)$ to denote the following statement.
\medskip

\fbox{$\bF(u)$}: \textit{If $E\subset \R^2$ is a compact set of Hausdorff dimension greater than $u$, then there exists $x\in E$ such that $\De_x(E)$ has positive Lebesgue measure.}
\medskip

To help understand the ideas, we will make several informal claims in this section without proofs. The rigorous proofs will be given in later sections.

We have the following implication from \eqref{keyest} to Falconer's distance problem.
\begin{claim}
    Suppose \eqref{keyest} holds for $D(u)$ and $D(u)\le u-1$, then $\bF(u)$ holds.
\end{claim}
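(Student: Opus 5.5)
Assume $\dim_H E>u$. I would follow the $L^2$ framework of Liu and Guth--Iosevich--Ou--Wang. By Frostman's lemma, fix two measures $\mu_1,\mu_2$ supported on disjoint separated pieces $E_1,E_2\subset E$, each with $\mu_i(B(x,r))\lesssim r^{\alpha}$ for some $\alpha>u$. For $x\in E_2$ let $d^x(\mu_1)$ be the pushforward of $\mu_1$ under $y\mapsto |x-y|$. It suffices to show that $d^x(\mu_1)$ is close in $L^1$ to an $L^2$ function for $\mu_2$-typical $x$, because then $|\Delta_x(E)|>0$. To set this up, decompose $\mu_1=\sum_j \mu_1*\phi_j$ into Littlewood--Paley pieces at frequency $R_j=2^j$. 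Then perform a good/bad decomposition: the wave packets at scale $R_j$ that pass through heavy $R_j^{-1/2+\delta}$-tubes (heavy meaning $\mu_2$-mass $\gtrsim R_j^{-1/2+\delta'}$) are called bad, and removing them changes $d^x(\mu_1)$ only by a small $L^1$ error for most $x$. This step I would take essentially verbatim from the GIOW argument, since it uses only the Frostman condition with $\alpha>1$. The good part is then controlled by bounding $\int \|d^x(\mu_{1,\mathrm{good}})\|_{L^2}^2\,d\mu_2(x)$.

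The main step is to estimate each dyadic scale $R=R_j$. Stationary phase shows that $\|d^x(\mu_{1,j,\mathrm{good}})\|_2^2$ is comparable to $R^{-1}$ times the integral of $|\widehat{\mu_{1,j,\mathrm{good}}}|^2$ over a circle, and by the Liu identity this reduces to quantities of the form $\int |F|^2\,d\mu_2$ with $\widehat F$ supported near $S^1_R$. I would localize this to a unit ball and replace $\mu_2$ at scale $R^{-1}$ by a union $X$ of $R^{-1}$-balls. After pigeonholing the density of $\mu_2$, $X$ satisfies \eqref{moraldimH} with exponent $u$, and $\mu_2|_X\approx |X|^{-1}\mathbf 1_X\,dx$ up to the normalization $\mu_2(X)\lesssim 1$. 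Applying \eqref{keyest} then gives
\[
\int |F|^2\,d\mu_2\lessapprox R^{D(u)}\|F\|_{L^2(B(0,1))}^2 .
\]
Finally I would bound $\|F\|_{L^2}^2\approx\int_{|\xi|\sim R}|\widehat{\mu_1}|^2\lesssim R^{2-\alpha}$ using the energy $I_\alpha(\mu_1)<\infty$. Combining with the $R^{-1}$ factor, the scale-$R$ contribution is $\lessapprox R^{D(u)-1+2-\alpha}\cdot R^{-?}$. I would then track the normalization carefully so that the exponent becomes $D(u)-(\alpha-1)\le u-\alpha<0$. This makes the contributions summable over $j$, and the sum converges since $\alpha>u$ strictly absorbs the $R^\epsilon$ losses.

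The main obstacle is the bookkeeping in the reduction from $\mu_2$ to the set $X$. The weighted estimate \eqref{keyest} is stated for sets with the exact normalization $|X|=R^{u-2}$ and an upper density bound with exponent $u$. A general Frostman measure, however, has a varying density at scale $R^{-1}$. I would dyadically pigeonhole the $\mu_2$-mass of $R^{-1}$-balls into $O(\log R)$ classes. For each class I would either check \eqref{moraldimH} after rescaling, or use a smaller effective exponent $u'\le u$ together with the monotonicity of the resulting bound, making sure the losses are only $R^{\epsilon}$. The exponents should then match exactly so that $D(u)\le u-1$ is precisely the summability threshold.
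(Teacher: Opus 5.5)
Your overall scheme is the right one, and it is the scheme the paper carries out rigorously in Sections~\ref{sec5}--\ref{sec8}: Liu's identity at each dyadic scale, reduction to $\int |F|^2\,d\mu_2$ with $\wh F$ on a circle of radius $r\sim R$, the weighted estimate, then the energy of $\mu_1$. For this claim the good/bad split is superfluous, since \eqref{keyest} holds for every $F$; keeping an $x$-independent GIOW good part is harmless.

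The genuine gap is the step you flag yourself, and neither proposed fix works. It is false that a density class $X_w=\bigcup\{Q\in\cD_{R^{-1}}:\mu_2(Q)\sim w\}$ satisfies \eqref{moraldimH}. The Frostman condition only gives $|X_w\cap B(x,r)|\lesssim (R^{-u}/w)\,R^{-2}(rR)^u$. Since $w\lesssim R^{-\alpha}$, the constant $R^{-u}/w$ is at least $R^{\alpha-u}$ in every class; for instance, for an $\alpha$-AD-regular $\mu_2$ the single class has $\sim R^{\alpha}\gg R^u$ balls. No rescaling removes this constant. A smaller exponent $u'$ is also unavailable: the hypothesis gives \eqref{keyest} only at the single value $u$, with no monotonicity in $u$, and such a class is larger than a $u$-set, not lower-dimensional.

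What is needed is to split rather than rescale. Partition each $X_w$ into $\lessapprox R^{-u}/w$ pieces $X'$ with $|X'\cap B(x,r)|\lesssim R^{-2}(rR)^u$. You can do this by a random coloring, or deterministically by uniformizing as in Lemma~\ref{decomplemma} and splitting as in Lemma~\ref{partitionlem}, which is how the paper obtains \eqref{alkt}. Then pad each $X'$ with part of a fixed full-size $u$-Katz--Tao set, so that $\|F\|_{L^2(X')}^2\lessapprox R^{D(u)}R^{u-2}\|F\|_{L^2(B(0,1))}^2$.

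The weights then cancel exactly, $wR^2\cdot(R^{-u}/w)\cdot R^{u-2}=1$, so $\int|F|^2\,d\mu_2\lessapprox R^{D(u)}\|F\|_{L^2(B(0,1))}^2$. Combine this with $\int_{r\sim R} r\,\|F_r\|_{L^2(B(0,1))}^2\,dr\lesssim R^{-1}\int_{|\xi|\sim R}|\wh{\mu_1}|^2\lesssim R^{1-\alpha}$, where the factor $R^{-1}$ comes from normalizing $\sigma_r$. The scale-$R$ contribution is then $\lessapprox R^{D(u)+1-\alpha}$. So your undetermined factor $R^{-?}$ is $R^0$, and summability follows from $D(u)\le u-1<\alpha-1$.
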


\begin{remark}
{\rm 
A classical weighted $L^2$ estimate shows that \eqref{keyest} holds for $D(u)=1-\frac{u}{2}$ (see, for example, \cite[Corollary 1.7]{MR3961084}). Plugging into $D(u)\le u-1$, we see that $\bF(\frac{4}{3})$ holds. This corresponds to the result of Wolff \cite{wolff1999decay}.}
\end{remark}

\medskip

It was observed by Guth-Iosevich-Ou-Wang \cite{guth2020falconer} that one does not need the full strength of \eqref{keyest} in order to show $\bF(u)$. Indeed, one may throw away the ``bad" part of $F$ which can be handled by Orponen's radial projection theorem \cite{orponen2018dimension}, and prove an estimate for the remaining ``good" part. Such an estimate suffices to derive $\bF(u)$. The trick nowadays is known as the good-bad decomposition. Recently, the second author \cite{liu2026lebesgue} introduced a multiscale good-bad decomposition, which we will adopt in this paper. Since it is too complicated to state here, we postpone the definition later. At present, we just use it as a black box and assume for any function $F$ as above, we have a multiscale good-bad decomposition
\[ F=F_\gd+F_\bd. \]
Here, $F_\gd$ is obtained from $F$ by throwing away some ``heavy" wave packets. So, we can still assume $\supp\wh F_\gd\subset S^1_R$ and $\|F_\gd\|_2\lesssim \|F\|_2$.

Similar to \eqref{keyest}, we are now interested in
\begin{equation}\label{keyest1}
    \frac{1}{|X|}\|F_\gd\|_{L^2(X)}^2\lessapprox R^{D_\gd(u)}  \|F_\gd\|_{L^2(B(0,1))}^2.
\end{equation}
We have a similar implication:
\begin{claim}\label{prop2.3}
    Suppose \eqref{keyest1} holds for $D_\gd(u)$ and $D_\gd(u)\le u-1$, then $\bF(u)$ holds.
\end{claim}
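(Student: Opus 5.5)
We argue by contradiction, following the Liu/Guth--Iosevich--Ou--Wang $L^2$ framework. Suppose $\dimH(E)>u$ but $|\De_x(E)|=0$ for every $x\in E$.

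\textbf{Setup.} Choose two disjoint compact pieces $E_1,E_2\subset E$, at distance $\gtrsim 1$ from each other, each of Hausdorff dimension $>u$. Equip them with Frostman measures $\mu_1,\mu_2$ of exponent $u+\epsilon$. The aim is to find a set $E_1'\subset E_1$ with $\mu_1(E_1')>0$ such that
\[
\int\|d^x_*\mu_2\|_{L^2}^2\,d\mu_1(x)<\infty \quad\text{on } E_1',
\]
where $d^x(y)=|x-y|$. Finiteness of the integral gives $d^x_*\mu_2\in L^2$ for $\mu_1$-a.e.\ $x\in E_1'$, hence $|\De_x(E)|>0$, the desired contradiction.

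\textbf{Decomposition.} Perform a Littlewood--Paley decomposition $\mu_2=\sum_j\mu_{2,j}$, with $\wh{\mu_{2,j}}$ supported on $|\xi|\sim R_j=2^j$. Decompose each piece into wave packets: $R_j^{-1/2}\times 1$ tubes pointing in direction $\theta$. A wave packet is called \emph{bad} if its tube carries $\mu_1$-mass larger than roughly $R_j^{-1/2+\delta}$, and \emph{good} otherwise. Define $\mu_2^{\gd}$ as the sum of the good packets, using the multiscale version of this decomposition. Orponen's radial projection theorem, available since $u>1$, shows that the bad part is concentrated on few directions from most points. Pruning $\mu_1$ to a set $E_1'$ of large measure then gives
\[
\|d^x_*\mu_2-d^x_*\mu_2^{\gd}\|_{L^1}\ \text{small for } x\in E_1'.
\]
It therefore suffices to bound $\int\|d^x_*\mu_2^{\gd}\|_2^2\,d\mu_1(x)$ and to pass from $L^1$ to $L^2$ in the usual way.

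\textbf{Main estimate.} Fix a scale $R=R_j$. Using the Fourier representation of the spherical averages, as in Liu's $L^2$ identity, one has
\[
\|d^x_*\mu_{2,j}^{\gd}\|_2^2\lesssim R^{-1}\int |F_\gd(x)|^2\,d\mu_1\text{-averaged},
\]
where $F$ is built from $\wh{\mu_{2,j}^{\gd}}$ restricted to the circle $S^1_R$. Discretize $\mu_1$ at scale $R^{-1}$: cover it by $R^{-1}$-balls, sorted dyadically by mass. This produces sets $X$ satisfying \eqref{moraldimH}, with $\mu_1$ comparable to $R^{-u}|X|^{-1}R^{-2}\cdots$ normalized measure on $X$. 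Applying \eqref{keyest1} then gives
\[
\int|F_\gd|^2\,d\mu_1\lessapprox R^{D_\gd(u)}\|F_\gd\|_{L^2(B(0,1))}^2 .
\]
By the energy bound for $\mu_2$ one has
\[
\|F_\gd\|_{L^2(B)}^2\lesssim \int_{|\xi|\sim R}|\wh\mu_2|^2\lesssim R^{2-u-\epsilon}.
\]
Combining these, the contribution of scale $R_j$ is
\[
\lessapprox R^{-1}\cdot R^{D_\gd(u)}\cdot R^{2-u-\epsilon}=R^{D_\gd(u)+1-u-\epsilon}.
\]
If $D_\gd(u)\le u-1$, this is $\lessapprox R^{-\epsilon}$, and summing over $j$ (the $R^\gamma$ losses being absorbed by choosing $\gamma<\epsilon/2$) gives a convergent series.

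\textbf{Main obstacle.} The main difficulty is the exact normalization in the middle step: matching the $R^{-1}$ factor in the spherical-average identity and the $\frac1{|X|}$ normalization in \eqref{keyest1} so that the exponents combine exactly to $D_\gd+1-u$. A secondary issue is that \eqref{keyest1} is stated for the good part $F_\gd$ of a single function, whereas the goodness here is defined relative to $\mu_1$ (the tubes charged by $\mu_1$). One must check that the good-bad decomposition used in \eqref{keyest1} is exactly the one compatible with Orponen's theorem, applied with the roles $(\mu_1,\mu_2)$ fixed and with $X$ the discretization of $\mu_1$.
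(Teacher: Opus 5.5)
Your overall architecture is the one the paper carries out rigorously in Sections \ref{sec5}--\ref{sec8}; note that the paper gives no separate proof of this informal claim. Your exponent count $R^{-1}\cdot R^{D_\gd}\cdot R^{2-u-\epsilon}$ matches the paper's \eqref{finalterm}, where it appears as $R^{1-\al}R^{D}$. The paper's pin measure $\nu$ plays the role of your $\mu_1$. One smaller point: your stated aim is not what the method proves. The method never shows $d^x_*\mu_2\in L^2$. It shows that $d^x_*\mu_2$ is $L^1$-close to $d^x_*\mu_2^{\gd}$, and then uses $1\le\|d^x_*\mu_2-d^x_*\mu_2^{\gd}\|_{L^1}+|\De_x(E)|^{1/2}\|d^x_*\mu_2^{\gd}\|_{L^2}$ integrated over the pins (terms $I$, $II$, $III$ in Section \ref{sec8}).

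The genuine gap is the step you defer as a ``secondary issue''; it is the actual content of the claim.

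(a) Sorting $R^{-1}$-balls by $\mu_1$-mass does not produce sets obeying \eqref{moraldimH}. A dyadic class carrying mass $\gtrsim R^{-\de}$ of a $(u+\epsilon)$-Frostman measure contains $\gtrsim R^{u+\epsilon-\de}$ balls. It therefore violates both $|X|=R^{-2}R^u$ and the $u$-Katz--Tao bound at large $r$. The paper instead uniformizes at all $T$ scales (Lemma \ref{decomplemma}). It then splits each uniform piece into pieces with $\sim R^u$ balls that are simultaneously Frostman and Katz--Tao (Lemma \ref{partitionlem}, which yields \eqref{alkt}).

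(b) In \eqref{keyest1}, $F_\gd$ is the good part relative to $X$, whereas your bad tubes are those heavy for all of $\mu_1$. The normalized masses $\nu^*(T)/\nu^*(\wt Q)$ and $\nu(T)/\nu(\wt Q)$ are not comparable, so in general neither good family contains the other. Hence the hypothesis does not apply to your $\mu_2^{\gd}$. Moreover, the multiscale scales $\rho_j,r_j$ come from the branching function of a uniform piece, so they are not defined for $\mu_1$ itself.

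The paper resolves this in four steps:
\begin{enumerate}
\item It runs the decomposition separately for each piece $\nu^*$, with thresholds normalized by $\nu^*_{\wt Q_j}$ as in \eqref{def-bad-Q-r-j}.
\item It proves $\mu\times\nu^*(Bad)\lesssim R^{-\de}\nu^*([0,1)^2)$ piece by piece (Lemma \ref{badestimatelem}).
\item It discards the remainder, of mass $\le R^{-\de^3}$.
\item It recombines the pieces by Cauchy--Schwarz.
\end{enumerate}
Also, for the multiscale decomposition Orponen's theorem alone does not give the bad bound. It only controls $\mu$ on the long $r_j/\rho_j\times 1$ tubes (Step 1 of Proposition \ref{prop-good-bad}). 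The $\nu^*$-heavy $r_j\times\rho_j$ tubes require the Cauchy--Schwarz count of Step 2, which uses that $\nu^*$ is $(1+\e)$-Frostman on each $[r_j,\rho_j]$.
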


\begin{remark}
{\rm Guth-Iosevich-Ou-Wang \cite{guth2020falconer} proved that \eqref{keyest1} holds for $D_\gd(u)=\frac{2-u}{3}$. Plugging into $D_\gd(u)\le u-1$, we see that $\bF(\frac{5}{4})$ holds. 
Recently, the second author \cite{liu2026lebesgue} showed that $D_\gd(u)=0$ under the assumption that $u>1$ and $X$ is AD-regular. (Here, AD-regular means $|X\cap B(x,r)|\sim R^{-2}(rR)^u$ for all $x\in X$ and $R^{-1}\le r\le 1$.)}
\end{remark}

\subsection{Packing dimensions}
To prove Theorem \ref{maintheorem}, we also need to incorporate the packing dimension. Recall that \eqref{moraldimH} is a discretized interpretation that $X$ has Hausdorff dimension $\ge u$. Now we introduce a discretized interpretation about the packing dimension.

Let $|X|_r$ denote the smallest number of $r$-balls needed to cover $X$, also called the $r$-covering number of $X$. Let $\kappa\in [u,2]$. In addition to \eqref{moraldimH}, we also require $X$ to satisfy:
\begin{equation}\label{moraldimP}
    |X|_r\lesssim r^{-\kappa} \textup{~for~all~}R^{-1}\le r\le 1.
\end{equation}
We are interested in the following estimate:
\begin{equation}\label{keyest2}
    \frac{1}{|X|}\|F_\gd\|_{L^2(X)}^2\lessapprox R^{D_\gd(u,\kappa)} \|F_\gd\|_{L^2(B(0,1))}^2,
\end{equation}
where $X$ satisfies \eqref{moraldimH} and \eqref{moraldimP}. Denote the following statement:

\fbox{$\bF(u,\kappa)$}: \textit{If $E\subset \R^2$ is a compact set of Hausdorff dimension greater than $u$ and packing dimension smaller than $\kappa$, then there exists $x\in E$ such that $\De_x(E)$ has positive Lebesgue measure.}
\medskip

We have the following informal results.
\begin{claim}
    Suppose \eqref{keyest2} holds for $D_\gd(u,\kappa)$ and $D_\gd(u,\kappa)\le u-1$, then $\bF(u,\kappa)$ holds.
\end{claim}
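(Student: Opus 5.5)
We follow the standard $L^2$ scheme of Liu and Guth--Iosevich--Ou--Wang, adapted so that the packing hypothesis is propagated into the discretized set $X$. The proof has four steps.

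\textbf{Step 1: choosing the measures.} Let $E$ be compact with $\dimH E>u$ and $\dimP E<\kappa$. First we pass to a compact subset $E'\subset E$ that keeps both properties and also satisfies a uniform upper box-counting bound $|E'|_r\lesssim r^{-\kappa'}$ for some $\kappa'<\kappa$. This is possible because packing dimension is the modified upper box dimension. Writing $E=\bigcup_j E_j$ with $\overline{\dim}_B E_j<\kappa$, some $E_j$ has positive $\mathcal H^{u'}$-measure, and we take a compact piece of it. By Frostman's lemma we then obtain two $u'$-Frostman probability measures $\mu_1,\mu_2$ on disjoint subsets of $E'$ that are separated by a distance $\gtrsim 1$. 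Both inherit the covering bound \eqref{moraldimP}.

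\textbf{Step 2: the pushforward measure and its $L^2$ norm.} We consider the pinned pushforward measures $d^x_*(\mu_1)$ for $x\in\supp\mu_2$. Following the multiscale good-bad decomposition, we split $\mu_1=\mu_{1,\gd}+\mu_{1,\bd}$ at dyadic scales $R_j=2^j$. The bad part is a sum of heavy wave packets, and Orponen's radial projection theorem (in Liu's multiscale form) controls it: for $x$ outside a set of small $\mu_2$-measure, $\|d^x_*(\mu_1)-d^x_*(\mu_{1,\gd})\|_{L^1}$ is small. It therefore suffices to show
\[
\int\|d^x_*(\mu_{1,\gd})\|_{L^2}^2\,d\mu_2(x)<\infty .
\]
Positivity of $|\De_x(E)|$ then follows from Cauchy--Schwarz, since $d^x_*(\mu_1)$ has mass close to $1$ while its good part has finite $L^2$ norm.

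\textbf{Step 3: reduction to the weighted estimate.} We localize in frequency to annuli $|\xi|\sim R$ and use the standard identity from Liu's $L^2$ approach. This bounds the $R$-piece of $\int\|d^x_*\mu_{1,\gd}\|_2^2\,d\mu_2$ by roughly
\[
R^{-1}\int |F_{\gd}|^2\,d\mu_2 ,
\]
where $F$ is the average over $r\sim R$ of $\widehat{\mu_1}\,d\sigma_r$, so that $\wh F$ is supported near $S^1_r$. We have $\|F\|_{L^2(B(0,1))}^2\lesssim R^{2-u'}$ by the $u'$-dimensional energy of $\mu_1$. We then discretize $\mu_2$ at scale $R^{-1}$ and pigeonhole dyadically in density. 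This yields a union of $R^{-1}$-balls $X$ carrying $\mu_2$-mass at most $R^{-u'}$ per ball, and it satisfies \eqref{moraldimH} and \eqref{moraldimP} up to $R^{\e}$ losses, with possibly smaller mass, which only helps. This gives
\[
\int|F_\gd|^2d\mu_2\lessapprox R^{2-u}\frac{1}{|X|}\|F_\gd\|^2_{L^2(X)},
\]
and applying \eqref{keyest2} bounds the $R$-piece by $R^{-1}R^{2-u'}R^{D_\gd(u,\kappa)}R^{\e}$.

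\textbf{Step 4: summation and the main obstacle.} The exponent above is $1-u'+D_\gd+\e$. If $D_\gd(u,\kappa)\le u-1$ and $u'>u$, it is negative, so the sum over dyadic $R$ converges. The main obstacle is the book-keeping in Step 3. The decomposition of $\mu_2$ must preserve both Frostman-type conditions simultaneously, and the good part must be defined relative to $\mu_2$ so that the bad part is still controlled by radial projections. The strict margins $u'-u>0$ and $\kappa-\kappa'>0$ are what absorb the $R^{\e}$ losses from pigeonholing.
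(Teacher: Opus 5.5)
Your architecture matches the paper's: separated Frostman measures with the packing bound placed on the pin set, wave packets, a good--bad decomposition with Orponen's theorem for the bad part, Liu's identity, the energy bound, and the exponent $1-u'+D_\gd<0$. However, Step 3 asserts something false, and it is exactly the point that Section~\ref{sec3} of the paper exists to handle. Pigeonholing $\mu_2$ by density at the single scale $R^{-1}$ gives level sets $X_\lambda$ with $\mu_2(Q)\sim\lambda\le R^{-u'}$. For $\lambda=R^{-s}$ with $s>u'$, you only know $\#\{Q\subset X_\lambda\cap B(x,r)\}\lesssim r^{u'}\lambda^{-1}$ and $\#X_\lambda\lesssim\min\{R^{s},R^{\kappa}\}$. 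So \eqref{moraldimH} fails by a polynomial factor (up to $R^{\kappa-u}$ at $r=1$), not by $R^{\e}$. Smaller mass per ball does not help; it means more balls. This already happens for the natural measure on a self-similar set of dimension $s\in(u,\kappa)$: the only level is $\lambda\approx R^{-s}$, and $|X|\approx R^{s-2}\gg R^{u-2}$.

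The missing step is to cut the pin measure into pieces with $\approx R^{u}$ balls each, every piece being both $u$-Katz--Tao and, after normalization, $u$-Frostman. The paper does this in three moves. It decomposes $\nu$ into $(w;T)$-uniform measures (Lemma~\ref{decomplemma}) and discards mass $\le R_\io^{-\de^3}$. It then splits each piece by Lemma~\ref{partitionlem} so that $f^*(1)=\al-O(T^{-1})$ and $f^*(x)\ge \al x-O(T^{-1})$, which yields \eqref{alkt}. Finally it applies the weighted estimate piece by piece, using $\int|F|^2\,d\nu^*\approx \frac{\nu^*([0,1)^2)}{|X|}\|F\|_{L^2(X)}^2$, and sums using $\sum\nu^*([0,1)^2)\le 1$.

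This also settles the issue you defer to the end. The good part, and hence $F_\gd$, has to be defined separately for each piece $\nu^*$. The bad caps come from $r_j\times\rho_j$ tubes that are heavy for $\nu^*$, at scales produced by the structure lemma from the branching function of $\nu^*$. The bad-part bound then applies Orponen's theorem to the normalized $\nu^*$, which is $\al$-Frostman only because of the multiscale uniformity and the lower bound $\nu^i_\io([0,1)^2)\ge R_\io^{-\de^2}$. Single-scale pigeonholing does not supply this structure.

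There are also smaller slips in Step 3:
\begin{itemize}
\item Liu's identity gives $\int_{r\sim R}|\mu_{\gd}*\wh{\si_r}(x)|^2\,r\,dr$, an integral of squares. So the weighted estimate must be applied to $(\wh{\mu_1}\,d\si_r)^\vee$ for each fixed $r$ and then integrated, as in \eqref{L2-good-formulation}. It should not be applied to an average over $r$, whose Fourier support is an annulus and whose square does not dominate the integral.
\item The factor $R^{2-u}$ in your display should be $\lesssim 1$. Your final bound $R^{-1}R^{2-u'}R^{D_\gd}R^{\e}$ is the correct one.
\item The scales must start at a large $R_0$, with $M_0\mu$ bounded crudely, for the bad parts to sum to something small.
\end{itemize}
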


\begin{claim}\label{claim4}
    \eqref{keyest2} holds for $D_\gd(u,\kappa)=\max\{\frac{\kappa-u}{2\kappa-1}, \frac{\kappa-u}{(u+1)(2\kappa-u-1)}\}$. Therefore, by the previous Claim, $\bF(u,\kappa)$ holds if $u>\frac32-\frac{1}{2\kappa}$ and $P(u,\kappa)<0$. 
\end{claim}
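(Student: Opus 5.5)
The claim has two parts: the weighted estimate \eqref{keyest2} with the stated exponent, and the deduction of $\bF(u,\kappa)$ from it. I would do the deduction first, since it is pure algebra on top of the third Claim. We need $D_\gd(u,\kappa)\le u-1$, i.e. both entries of the maximum are $\le u-1$.

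For the first entry, $\frac{\kappa-u}{2\kappa-1}\le u-1$ rearranges to $\kappa-u\le (u-1)(2\kappa-1)$, i.e. $0\le 2\kappa u-3\kappa+1$, i.e. $u\ge \frac32-\frac1{2\kappa}$.

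For the second entry, $\frac{\kappa-u}{(u+1)(2\kappa-u-1)}\le u-1$ becomes
\[ \kappa-u\le (u^2-1)(2\kappa-u-1). \]
Expanding, this is
\[ 0\le -u^3+(2\kappa-1)u^2+2u-(3\kappa-1)=-P(u,\kappa). \]
Both rearrangements are legitimate because $u>1$ and $2\kappa-u-1>0$.

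Given a set $E$ with $\dimH E>u$ and $\dimP E<\kappa$, both inequalities are strict. By continuity we can shrink $u$ slightly (and enlarge $\kappa$ slightly) to absorb the $R^\gamma$ losses hidden in $\lessapprox$. This yields the ``therefore'' part.

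The main work is the weighted estimate itself, which I would prove by $L^2$ ball inflation along the multiscale good--bad decomposition.
\begin{itemize}
\item Decompose $F$ into wave packets at a chain of scales $R^{-1}=r_0<r_1<\dots<r_m=R^{-1/2}$, with $r_{j+1}\approx r_j^{1-\epsilon}$.
\item At scale $r$, restricted to an $r$-ball, $F$ is essentially a sum of planks of width $r$ and length $\sim Rr^2$. Their directions lie in arcs of length $\sim 1/(Rr)$.
\item The good part $F_\gd$ keeps only those planks $T$ with $|X\cap T|\lessapprox R^{-2}(\text{expected mass})$, i.e. planks not over-concentrated on $X$.
\end{itemize}

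The inflation step goes as follows. Take an $r$-ball $B$ meeting $X$ and inflate it to a $\rho$-ball with $\rho=Rr^2$ (the length of the planks). Locally constant $L^2$ orthogonality bounds $\|F_\gd\|_{L^2(X\cap B)}^2$ by $\frac{|X\cap B|}{|B|}$ times an average of $|F_\gd|^2$ over the inflated tubes. The good condition bounds the number of $r$-balls of $X$ along each plank, and comparing this with the Frostman bound \eqref{moraldimH} gives a gain at each scale. Iterating over scales gives one bound. Separately, the packing condition \eqref{moraldimP}, $|X|_r\lesssim r^{-\kappa}$, controls how many $r$-balls $X$ can occupy, which gives a complementary bound.

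I would then optimize the intermediate scale $r=R^{-s}$ by balancing these two bounds. I expect $s\approx \frac{1}{2\kappa-1}$ to give the exponent $\frac{\kappa-u}{2\kappa-1}$. The second exponent $\frac{\kappa-u}{(u+1)(2\kappa-u-1)}$ should arise from a two-step inflation in which the Frostman exponent $u$ enters the ball-counting a second time.

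The main obstacle is this step: getting the multiscale good condition to interact correctly with both the Frostman and the packing counts at every scale simultaneously, with only $R^\epsilon$ losses. Concretely, the number of scales must stay $O(\epsilon^{-1})$, and the bad part discarded at each scale must remain controllable by Orponen's radial projection theorem as in Claim~\ref{prop2.3}. My first attempt would be to prove a single-scale inflation inequality, namely the estimate $(\ast)$ of the gain from scale $r$ to $Rr^2$ under the plank-density condition, and then induct on scales. The exponents should come out of the resulting linear program in $s$.
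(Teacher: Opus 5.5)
Your algebra for the ``therefore'' part is correct. $\frac{\kappa-u}{2\kappa-1}\le u-1$ is equivalent to $u\ge\frac32-\frac1{2\kappa}$, and $\frac{\kappa-u}{(u+1)(2\kappa-u-1)}\le u-1$ is equivalent to $P(u,\kappa)\le 0$.

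The gap is the weighted estimate itself. Your outline only ``expects'' the exponents, and the scheme you describe cannot produce them.

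The first missing idea is that the inflation scales must be chosen adaptively from the branching function $f$ of $X$, not as a fixed chain. An inflation step from $r=R^{-b}$ to $\rho=R^{-a}$ with $\rho\le Rr^2$ is at best \emph{lossless} for the normalized averages $\frac{1}{|N_r(X)|}\|F_\gd\|_{L^2(N_r(X))}^2$; there is no per-scale gain to iterate. The step is also only usable if the discarded bad part is small. That requires $f$ to be $(1+\e)$-Frostman on $[a,b]$: the Cauchy--Schwarz count of heavy $r\times\rho$ planks in Step 2 of the proof of Proposition~\ref{prop-good-bad} converges only because the local exponent exceeds $1$.

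The hypotheses allow stretches where $f$ is flat. For example, take $\kappa=2$, $u=\frac54$, and $f=2x$ on $[0,\frac12]$, $f\equiv1$ on $[\frac12,\frac45]$, $f=ux$ on $[\frac45,1]$. Every admissible interval $[a,\frac45]$ has $a\ge\frac35$, so it lies in the flat stretch. At $r=R^{-4/5}$, each $\rho$-cube therefore meets essentially one $r$-cube, every direction is bad for it, and the bad part is not small. A fixed chain through all scales breaks down here.

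Ingredient~\ref{lem3} (Lemma~\ref{preprestructurelem}) resolves this. With $h(x)=f(x)-(1+\e)x$, it starts at the first point $q$ where the graph of $f$ meets the line $y\approx(2\kappa-1)x-\kappa+1$. It then walks backward, taking $a$ to be the leftmost minimizer of $h$ on $[\max\{0,2b-1\},b]$. The packing bound $f(x)\le\kappa x$ is used precisely to guarantee $h(a_0)<h(b)$, i.e.\ that this walk never stalls. That is how $\kappa$ enters, not as a separate ``complementary'' counting bound to be balanced.

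The second missing idea concerns the scales between the last inflated scale $r_k=R^{-x_k}$ and $R^{-1}$. There no lossless chain is available: the walk is only guaranteed from $q=x_k$, and admissible steps degenerate as $r\to R^{-1}$, since $Rr^2\to R^{-1}$. On these scales you need the Katz--Tao weighted estimate $\|Ef\|_{L^2(X)}^2\le C_\e R^\e|X|^{\frac{u}{u+1}}\|f\|_2^2$ (Ingredient~\ref{lem2}, Lemma~\ref{lemwtedL2}). It is proved by a broad--narrow reduction: parabolic rescaling and induction on scales in the narrow case, and bilinear restriction plus Cauchy--Schwarz in the broad case. This estimate, not a second Frostman count, is the source of the factor $u+1$.

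Combining the two ingredients, the total loss is $R^{1-x_k+\frac{f(x_k)-u}{u+1}}$ with $f(x_k)=(2\kappa-1)x_k-\kappa+1$. This is linear in $x_k$, and the constraint $ux_k\le f(x_k)\le u$ confines $x_k$ to $[\frac{\kappa-1}{2\kappa-u-1},\frac{u+\kappa-1}{2\kappa-1}]$. The two endpoints give exactly the two entries of the maximum. So the maximum is a worst case over the possible branching of $X$ (you do not get to choose $x_k$), not the outcome of optimizing a free intermediate scale $r=R^{-s}$.
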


\medskip

\subsection{Ball inflation process}\label{subsecballinflation}
We discuss the strategy to prove Claim \ref{claim4}. It is through a ball inflation process. Recall that $X$ is a union of $R^{-1}$-balls satisfying \eqref{moraldimH} and \eqref{moraldimP}. To pass from the left-hand side to the right-hand side in \eqref{keyest2}, we choose a sequence of scales $1=r_0>r_1>\dots>r_{k+1}=R^{-1}$. Let $N_{r_i}(X)$ be the $r_i$-neighborhood of $X$. We may morally view $N_{r_i}(X)$ as a union of $r_i$-balls. Essentially, $N_{r_{k+1}}(X)$ is $X$ and $N_{r_0}(X)$ is $B(0,1)$. For each $1\le i\le k$, we are going to prove an estimate
\begin{equation}\label{onestep}
    \frac{1}{|N_{r_i}(X)|}\|F_\gd\|_{L^2(N_{r_i}(X))}^2\lessapprox R^{D_i}\frac{1}{|N_{r_{i-1}}(X)|}\|F_\gd\|_{L^2(N_{r_{i-1}}(X))}^2. 
\end{equation} 
Each time, the balls are inflated from radius $r_i$ to $r_{i-1}$. Combining all the estimates gives us
\[ \frac{1}{|X|}\|F_\gd\|_{L^2(X)}^2\lessapprox R^{D_1+\dots+D_{k+1}}  \|F_\gd\|_{L^2(B(0,1))}^2. \]
Hence, our goal becomes to carefully choose a sequence $\{r_i\}$ so as to minimize $\sum_{i=1}^{k+1} D_i$.

Next, we will focus on the one-step ball inflation estimate \eqref{onestep}. Before doing that we introduce some notation. First of all, we may assume $X$ satisfies the \textbf{uniform property}: 
For any $R^{-1}\le r\le 1$, the quantities $|B_r\cap X|$ are comparable for all dyadic $r$-cubes $B_r$ such that $B_r\cap X\neq \emptyset$. 

By the uniform property of $X$, one may see that $|X|_r\sim \frac{|X|}{|B_r\cap X|}$.
We define the \textbf{branching function} associated to $X$ to be $f:[0,1]\rightarrow [0,\infty)$ in the following way. For $x\in [0,1]$, let $r=R^{-x}$. Then $f(x)$ is defined so that 
\[|X|_r\sim R^{f(x)}.\] 
(Note that $f$ is defined up to an error about $O(\frac{1}{\log R})$.)

We summarize the properties of $f$.
\begin{enumerate}[label=(\roman*)]
    \item $f$ is increasing; $f(0)=0$ since $|X|_1=1$; $f(1)=u$ since $|X|_{R^{-1}}=R^{2}|X|=R^u$.
    \item $f(x)\ge u x$, since $R^{f(x)}=|X|_r\sim\frac{|X|}{|B_r\cap X|}\gtrsim \frac{R^{-2}R^u}{R^{-2}(rR)^u}=R^{ux}$, where $r=R^{-x}$.
    \item $\textup{Lip}(f)\le 2$. Indeed, for $x'>x$, $r=R^{-x},r'=R^{-x'}$, one has $R^{f(x')-f(x)}\sim \frac{|X|_{r'}}{|X|_r}\sim \frac{|B_r\cap X|}{|B_{r'}\cap X|}\lesssim \frac{|B_r|}{|B_{r'}|}\sim R^{2(x'-x)}$.
    \item $f(x)\le \kappa  x$. Indeed, for $r=R^{-x}$, one has $R^{f(x)}=|X|_{r}\lesssim r^{-\kappa}=R^{x\kappa}$. Here, we used \eqref{moraldimP}.
\end{enumerate}

Now we give the following definitions.
\begin{definition}\label{defFKT}
    Let $f$ be the branching function associated to $X$. For $[x_1,x_2]\subset [0,1]$ and $s\in[0,2]$, we say $f$ is $s$-\textbf{Frostman} on $[x_1,x_2]$ if for all $x\in [x_1,x_2]$,
    \[ f(x)\ge f(x_1)+s(x-x_1). \]
    We say $f$ is $s$-\textbf{Katz-Tao} on $[x_1,x_2]$ if for all $x\in [x_1,x_2]$,
    \[ f(x)\ge f(x_2)+s(x-x_2). \]
\end{definition}
We state the three main ingredients that are key steps in the proof of Claim \ref{claim4}. The first one was introduced by the second author in \cite{liu2026lebesgue}.

\begin{ingredient}[Lossless ball inflation (informal)]\label{lem1}
    Let $f$ be the branching function associated to $X$. Let $[x_1,x_2]\subset [0,1]$ and $r_i=R^{-x_i}$ $(i=1,2)$. If $f$ is $1$-Frostman on $[x_1,x_2]$ and $1-x_1\le 2(1-x_2)$, then
    \begin{equation}
         \frac{1}{|N_{r_2}(X)|}\|F_\gd\|_{L^2(N_{r_2}(X))}^2\lessapprox \frac{1}{|N_{r_1}(X)|}\|F_\gd\|_{L^2(N_{r_1}(X))}^2.
    \end{equation}
\end{ingredient}

\begin{ingredient}\label{lem2}
    Let $f$ be the branching function associated to $X$. Let $x\in [0,1]$ and $r=R^{-x}$. If $f$ is $u$-Katz-Tao on $[x,1]$, then
    \begin{equation}
        \|F_\gd\|_{L^2(X)}^2\lessapprox (\frac{|X|_{R^{-1}}}{|X|_r})^{\frac{u}{u+1}} \frac{1}{rR}\|F_\gd\|_{L^2(N_{r}(X))}^2.
    \end{equation}
\end{ingredient}
\begin{remark}
    {\rm
    Our weighted $L^2$ estimate is different from that proved by Du and Zhang \cite{MR3961084}. Comparing with \cite[(1.11)]{MR3961084}, our weighted $L^2$ inequality contains a factor $|X|_{R^{-1}}$ on the right hand side. Therefore, we are able to obtain some gain when the size of $X$ is small.}
\end{remark}

\begin{ingredient}[A structure lemma for Lipschitz functions]\label{lem3}
    Suppose {$1<u<\kappa\leq 2$}. Let $f:[0,1]\rightarrow [0,\infty)$ be a piecewise linear Lipschitz function satisfying: $f$ is increasing, $f(0)=0,f(1)=u$, $ux\le f(x)\le \kappa x$, $\textup{Lip}(f)\le 2$. Then there exists a sequence $0=x_0<x_1<\dots<x_{k}<x_{k+1}=1$ such that $f$ is $1$-Frostman on $[x_{i-1},x_i]$ and $1-x_{i-1}\le 2(1-x_i)$ for $1\le i\le k$ and $f(x_k)=(2\kappa-1)x_{k}-\kappa+1$. (Geometrically, $(x_{k},f(x_{k}))$ lies on the line segment $\overline{IF}$ in Figure \ref{Fig1}.)
\end{ingredient}

\begin{remark}
{\rm 
    We imposed a technical assumption: $f$ is piecewise linear. Indeed, without loss of generality, one may always assume $f$ is piecewise linear by perturbing the original $f$. The piecewise linear assumption is for the following use: In the proof of Ingredient \ref{lem3}, an algorithm is carried out to extract a sequence $\{x_i\}$. This assumption guarantees that the algorithm will stop in finite steps.}
\end{remark}

We are ready to give the proof of Claim \ref{claim4}.
\begin{proof}[Proof of Claim \ref{claim4}]
    {Assume first that $u<\kappa$.} Using Ingredient \ref{lem3}, we obtain a sequence $0=x_0<x_1<\dots<x_{k}<x_{k+1}=1$. By Ingredient \ref{lem1} and Ingredient \ref{lem2}, we have
    \begin{equation}
        \frac{1}{|X|}\|F_\gd\|_{L^2(X)}^2\lessapprox \frac{|N_{r_{k}}(X)|}{|X|}(\frac{|X|_{R^{-1}}}{|X|_{r_{k}}})^{\frac{u}{u+1}}\frac{1}{r_{k}R} \|F_\gd\|_{L^2(B(0,1))}^2, 
    \end{equation} 
where $r_{k}=R^{-x_{k}}$.
It remains to check 
\begin{equation}\label{finalestimate}
    \frac{|N_{r_{k}}(X)|}{|X|}(\frac{|X|_{R^{-1}}}{|X|_{r_{k}}})^{\frac{u}{u+1}}\frac{1}{r_{k}R}\lesssim R^{\max\{\frac{\kappa-u}{2\kappa-1}, \frac{\kappa-u}{(u+1)(2\kappa-u-1)} \}}. 
\end{equation} 
Note that $|N_{r_{k}}(X)|\sim r_{k}^2|X|_{r_{k}}$, $|X|_{r_{k}}\sim R^{f(x_{k})}$, $|X|\sim R^{-2}|X|_{R^{-1}}\sim R^{-2}R^u$. The left-hand side in the above inequality is
\[ Rr_{k}(\frac{|X|_{r_{k}}}{|X|_{R^{-1}}})^{\frac{1}{u+1}}\sim R^{1-x_{k}+\frac{f(x_{k})-u}{u+1}}. \]
Using $f(x_{k})=(2\kappa-1)x_{k}-\kappa+1$, we see that the above quantity is 
\[R^{1-x_k+\frac{(2\kappa-1)x_k-\kappa+1-u}{u+1}}.\] 
It suffices to check
\begin{equation}\label{tocheck}
    1-x_k+\frac{(2\kappa-1)x_k-\kappa+1-u}{u+1}\le \max\{\frac{\kappa-u}{2\kappa-1}, \frac{\kappa-u}{(u+1)(2\kappa-u-1)}\}. 
\end{equation} 
This inequality is linear in $x_k$. Noting that $ux_k\le f(x_k)\le u$, we obtain
\[ \frac{\kappa-1}{2\kappa-u-1}\le x_k\le \frac{u+\kappa-1}{2\kappa-1}. \]
Plugging the two endpoints of $x_k$ will finish the proof {when $u<\kappa$}.

{If $1<u=\kappa<2$, apply this argument with any $\kappa'\in(u,2]$: the resulting exponent tends to $0$ as $\kappa'\downarrow u$, giving the claim in the $\lessapprox$ sense. If $u=\kappa=2$, the claim follows directly from $X\subset B(0,1)$ and $|X|=1$.}
\end{proof}

\subsection{Parameters} In the paper, we will use parameters $\e,\eta, T, \de$ to handle some technicalities. The relation between them is 
\begin{equation}\label{choiceofpar}
    \de\ll T^{-1}\ll \eta\ll \e \ll 1.
\end{equation}
A precise choice will be given later.

\section{Uniform measures and branching functions}\label{sec3}

In this section, we show that for a given measure, we can decompose it as a sum of uniform measures plus a negligible term. We also define the branching function for a uniform measure.

For a number $0<\rho\le 1$, we use $\cD_\rho=\{Q_\rho\}$ to denote the set of dyadic $\rho$-cubes. In other words, each cube in $\cD_\rho$ is of form $[n\rho,(n+1)\rho)\times [m\rho,(m+1)\rho)$ for some $n,m\in \Z$. Throughout the paper, we fix a large integer $T$, which will be determined later.

\begin{definition}
    Given a sequence $w=(w_1,\dots,w_T)\in [0,1]^T$ and $R\gg 1$, we say a measure $\nu$ is $(w;T)$-\textbf{uniform} at scale $R$ if for any $1\le i\le T$ and any $Q_{R^{-\frac{i}{T}}}\in \cD_{R^{-\frac{i}{T}}}$ with $Q_{R^{-\frac{i}{T}}}\cap \supp\nu\neq\emptyset$, we have
    \[ \nu(Q_{R^{-\frac{i}{T}}})\in [w_i,2w_i). \]
\end{definition}

\begin{lemma}\label{decomplemma}
Let $0<\de<1$.
    Let $\nu$ be a probability measure supported in $[0,1)^2$. Let $R\gg 1$. Then there exist $N\lesssim (\log R)^{2T}$ measures $\nu^1,\dots,\nu^{N+1}$ such that: 
    \begin{enumerate}
        \item $\sum_{i=1}^{N+1}\nu^i = \nu$;
        \item $\nu^i([0,1)^2)\ge R^{-\de^2}$ for $1\le i\le N$, and  $\nu^{N+1}([0,1)^2)\le R^{-\de^3}$;
        \item For $1\le i\le N$, $\nu^i$ is $(w^i;T)$-uniform for some $w^i\in [0,1]^T$.
    \end{enumerate} 
\end{lemma}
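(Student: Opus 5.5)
We prove Lemma \ref{decomplemma} by the standard dyadic pigeonholing on a tree of cubes, working from the finest scale $R^{-1}$ upward. For each $R^{-1}$-cube $Q\in\cD_{R^{-1}}$ with $\nu(Q)>0$, record its \emph{type}
\[ t(Q)=(j_1,\dots,j_T)\in \Z_{\ge0}^T,\qquad \nu(Q^{(i)})\in [2^{-j_i-1},2^{-j_i}), \]
where $Q^{(i)}$ is the unique cube in $\cD_{R^{-i/T}}$ containing $Q$. Cubes with some $\nu(Q^{(i)})<R^{-C}$ are discarded into the remainder $\nu^{N+1}$. Here $C$ is chosen large, say $C=10$, so that the total discarded mass is at most $T\cdot R^{2}\cdot R^{-C}\ll R^{-\de^3}$, since there are at most $R^2$ cubes at each scale in $[0,1)^2$. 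After this step each $j_i\lesssim \log R$, so the number of types is $\lesssim (\log R)^T\le(\log R)^{2T}$.

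The type alone does not give uniformity, because the restriction $\nu|_{\bigcup\{Q:t(Q)=t\}}$ changes the masses of the parent cubes. To fix this we proceed inductively from coarse to fine scales, which is the usual way of producing uniform sets.

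\textbf{Step $i$.} We have a current collection of pieces, each of the form $\nu|_{A}$ where $A$ is a union of $R^{-1}$-cubes. For each piece, classify each $R^{-i/T}$-cube $Q'$ meeting the piece by the dyadic value of $\nu|_A(Q')$, and split the piece accordingly into $\lesssim \log R$ sub-pieces. Since the restriction at scale $i$ is to a union of whole $R^{-i/T}$-cubes, the masses of cubes at scale $i$ within each sub-piece are preserved.

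The difficulty is that this step destroys the uniformity already obtained at the coarser scales $i'<i$. To avoid it we reverse the order and go from fine to coarse. At step $i$, processed for $i=T,T-1,\dots,1$, the split is into unions of $R^{-i/T}$-cubes. This does not change $\nu|_A(Q'')$ for any finer cube $Q''$ at scale $i''>i$, because each such $Q''$ lies entirely inside or entirely outside the union. Hence the uniformity achieved at the finer scales survives, and we get the uniformity at scale $i$. This ordering is the key point of the argument. After $T$ steps we have at most $(C\log R)^{T}\lesssim(\log R)^{2T}$ pieces, each $(w;T)$-uniform with $w_i$ equal to the dyadic value in the last relevant split. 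We adjust the convention $[w_i,2w_i)$ by taking $w_i=2^{-j-1}$.

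\textbf{Final step.} Any piece with total mass $<R^{-\de^2}$ is moved into $\nu^{N+1}$. The total mass moved is at most $(\log R)^{2T}R^{-\de^2}$, plus the earlier discarded mass. This is $\le R^{-\de^3}$ for $R$ large, since $\de^3<\de^2$ and $(\log R)^{2T}=R^{o(1)}$. This bound, fixed $T,\de$ with $R\to\infty$, is the only place where care is needed. Properties (1)–(3) then follow.
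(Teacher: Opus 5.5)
Your argument is correct. Its core is the same as the paper's: dyadic pigeonholing of cube masses, processed from the finest scale $R^{-1}$ up to the coarsest. Each restriction is then to a union of whole $R^{-i/T}$-cubes, so it leaves the masses of all finer cubes unchanged, and with them the uniformity already obtained.

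What differs is the global bookkeeping. The paper packages the pigeonholing into Lemma \ref{onesteplem}. That lemma keeps only one popular dyadic class per scale, so it extracts a single uniform piece carrying a $(C\log R)^{-T}$ fraction of the current mass. The paper then exhausts $\nu$ greedily, and the geometric decay $(1-(C\log R)^{-T})^N$ of the remainder costs an extra factor of $\log R$ in $N$. You instead keep every dyadic class at every scale. This partitions the surviving mass into at most $(C\log R)^T$ uniform pieces in a single pass, after which you move the light pieces into $\nu^{N+1}$.

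Your version gives a sharper count, needs no iteration, and makes the two thresholds easy to track. In the greedy scheme the stopping rule has to be that the remainder drops below $R^{-\de^3}$, since Lemma \ref{onesteplem} needs input mass at least $R^{-\de^3}$ to output a piece of mass at least $R^{-\de^2}$. In your version the only requirement is $(C\log R)^T R^{-\de^2}+TR^{2-C}\le R^{-\de^3}$, which holds for $R$ large in terms of $T$ and $\de$.

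One step deserves a sentence in a write-up. The claim that each split yields only $\lesssim \log R$ sub-pieces needs a lower bound on $\nu|_A(Q')$, not on $\nu(Q')$. It holds because every piece is a union of surviving $R^{-1}$-cubes, each of $\nu$-mass at least $R^{-C}$. Hence any $Q'$ charged by $\nu|_A$ has $\nu|_A(Q')\ge R^{-C}$. In particular, discarding at the finest scale suffices, and the type vectors $t(Q)$ of your first paragraph are never actually used.
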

 
To prove Lemma \ref{decomplemma}, we use the following one-step refinement.

\begin{lemma}\label{onesteplem}
    Let $\nu$ be a measure supported in $[0,1)^2$ and suppose $1\ge \nu([0,1)^2)\ge R^{-\de^3}$. Let $R\gg 1$. Then there exists a measure $\nu^1$ such that: $\nu^1\le \nu$; $\nu^1$ is $(w^1;T)$-uniform for some $w^1=(w^1_1,\dots,w^1_T)$; and 
    \[ \nu^1([0,1)^2)\ge \frac{1}{(C\log R)^T}\nu([0,1)^2).\]
    Here, $C$ is a universal constant. If $R$ is sufficiently large, then $\nu^1([0,1)^2)\ge \frac{1}{(C\log R)^T}R^{-\de^3}\ge R^{-\de^2}$.
\end{lemma}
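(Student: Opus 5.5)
The plan is the standard iterated pigeonholing over scales, carried out from the coarsest scale to the finest. Write $\rho_i=R^{-i/T}$ for $1\le i\le T$. We construct measures $\nu=\mu_0\ge \mu_1\ge\dots\ge\mu_T$, where each $\mu_i$ is the restriction of $\mu_{i-1}$ to a union of cubes in $\cD_{\rho_i}$. We arrange that all of these cubes carry comparable $\mu_{i-1}$-mass, with values in a single dyadic interval $[w_i,2w_i)$. We then take $\nu^1:=\mu_T$.

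At step $i$, we sort the cubes $Q\in\cD_{\rho_i}$ with $\mu_{i-1}(Q)>0$ according to the dyadic value of $\mu_{i-1}(Q)$. Cubes with $\mu_{i-1}(Q)\le R^{-3}\mu_{i-1}([0,1)^2)$ are discarded. There are at most $R^{2}$ cubes of side $\ge R^{-1}$ meeting $[0,1)^2$, so together they carry at most $R^{-1}\mu_{i-1}([0,1)^2)$, which is at most half the mass. The remaining cubes fall into $O(\log R)$ dyadic classes. By pigeonholing, one class $[w_i,2w_i)$ carries at least $\frac{1}{C\log R}\mu_{i-1}([0,1)^2)$, and we restrict $\mu_{i-1}$ to the union of the cubes in that class to obtain $\mu_i$. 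After $T$ steps we get $\mu_T([0,1)^2)\ge (C\log R)^{-T}\nu([0,1)^2)$. Note that the lower cutoff uses $\nu([0,1)^2)\ge R^{-\de^3}$ only mildly; a cutoff relative to the current mass suffices.

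The main obstacle is checking uniformity of the final measure. Restricting at a finer scale $j>i$ changes the masses of the scale-$\rho_i$ cubes, so $\mu_T(Q)$ need not lie in $[w_i,2w_i)$. To handle this, we reverse the order and refine from the finest scale $i=T$ up to the coarsest $i=1$.

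Restriction at a coarser scale removes whole coarser cubes. Each finer cube lies either entirely inside a removed cube or entirely inside a kept one, because the dyadic grids are nested. So once the masses of the scale-$\rho_j$ cubes are fixed, they are preserved under all later restrictions at coarser scales $i<j$, for every surviving cube.

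When we then pigeonhole at a coarser scale $\rho_i$, the masses $\mu(Q_{\rho_i})$ are those of the current measure. After we restrict to the chosen class, the surviving cubes have masses in $[w_i,2w_i)$, and further coarser restrictions keep or delete them whole. Hence the final measure is $(w;T)$-uniform. The mass bound is the same product of $T$ factors $\frac{1}{C\log R}$.

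The last claim in the statement is arithmetic. We have $(C\log R)^{-T}R^{-\de^3}\ge R^{-\de^2}$ as soon as $(C\log R)^T\le R^{\de^2-\de^3}$, which holds for $R$ large depending on $T$ and $\de$.
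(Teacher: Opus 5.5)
Your argument is correct and is essentially the paper's proof: dyadic pigeonholing of cube masses one scale at a time, starting at the finest scale $\cD_{R^{-1}}$ and moving to coarser scales. Uniformity at finer scales survives because each coarser restriction keeps or removes whole nested cubes; the paper leaves this implicit and you make it explicit. Your cutoff relative to the current mass, in place of the paper's absolute threshold $R^{-3}$ (which is where the paper uses $\nu([0,1)^2)\ge R^{-\de^3}$), is a harmless variation.
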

\begin{proof}[Proof of Lemma \ref{onesteplem}]
    The proof follows from a standard pigeonhole argument. We first sort $Q\in \cD_{R^{-1}}$ according to $\nu(Q)$. Let 
    \[\cD_{R^{-1},w}:=\{ Q\in\cD_{R^{-1}}: \nu(Q)\in [w,2w) \},\]
    \[ \cD_{R^{-1},\textup{tiny}}:=\{Q\in\cD_{R^{-1}}: \nu(Q)<R^{-3}\}. \]
    Then 
    \[ \cD_{R^{-1}}=\cD_{R^{-1},\textup{tiny}}\sqcup\bigsqcup_{w\textup{dyadic},w\in [R^{-3},1]}\cD_{R^{-1},w}.  \]
Note that 
\[ \sum_{Q\in \cD_{R^{-1},\textup{tiny}}}\nu(Q)\le R^{-3}R^2\le\frac{1}{2}\nu([0,1)^2). \]
By pigeonholing, there exists $w_1\in [R^{-3},1]$, such that 
\[ \sum_{Q\in \cD_{R^{-1},w_1}}\nu(Q)\gtrsim \frac{1}{\log R}\nu([0,1)^2). \]
We let $\nu_1=\nu|_X$, where $X={\cup_{Q\in\cD_{R^{-1},w_1}}Q}$. Then
\[ \nu_1([0,1)^2)\gtrsim \frac{1}{\log R}\nu([0,1)^2). \]
Iterating this argument $T$ steps, we obtain $w^1=(w_1,\dots,w_T)$ and a $w^1$-uniform measure $\nu^1$, such that
\[ \nu^1([0,1)^2)\ge \frac{1}{(C\log R)^T}\nu([0,1)^2). \]

\end{proof}

\begin{proof}[Proof of Lemma \ref{decomplemma}]
    Applying Lemma \ref{onesteplem}, we obtain a $(w^1;T)$-uniform measure $\nu^1$ with $\nu^1([0,1)^2)\ge \frac{1}{(C\log R)^T}\nu([0,1)^2).$ Equivalently,
    \[ (\nu-\nu^1)([0,1)^2)\le (1-\frac{1}{(C\log R)^T})\nu([0,1)^2). \]
Applying Lemma \ref{onesteplem} to $\nu-\nu^1$, we obtain a $(w^2;T)$-uniform measure $\nu^2$ satisfying
\[ (\nu-\nu^1-\nu^2)([0,1)^2)\le (1-\frac{1}{(C\log R)^T})^2\nu([0,1)^2). \]
Iterating this process $N$ steps, we obtain $\nu^1,\dots,\nu^{N}$. Here, $N$ is chosen so that $\nu^{N+1}:=\nu-\nu^1-\dots-\nu^{N}$ satisfies
\[ \nu^{N+1}([0,1)^2)\le R^{-\de^2} \]
for the first time.

Since $\nu^{N+1}([0,1)^2)\le (1-\frac{1}{(C\log R)^T})^N\nu([0,1)^2)=(1-\frac{1}{(C\log R)^T})^N$ and noting that $(1-\frac{1}{(C\log R)^T})^{C_1(\log R)^{2T}}<R^{-\de^2}$ for large $C_1$, we have
\[ N\lesssim (\log R)^{2T}. \]
\end{proof}

\medskip

Next, we define the branching function of a uniform measure. Recall that $\nu$ is $(w;T)$-uniform at scale $R$ if for each $0\le i\le T$, $\nu$ is supported in a union of dyadic $R^{-\frac{i}{T}}$-cubes, such that the mass of $\nu$ in each such cube is $\sim w_i$. Recall that $|X|_\rho$ is the $\rho$-covering number of $X$. We may allow the measure to have total mass $<1$.

\begin{definition}[Branching function]
    Let $\nu$ be a measure that is $(w;T)$-uniform at scale $R$, and is supported in $[0,1)^2$. Define $f:[0,1]\rightarrow [0,\infty)$ as follows.

    For $i\in \{0,1,\dots,T\}$, define $f(\frac{i}{T})$ so that
    \[ |\supp\nu|_{R^{-\frac{i}{T}}}=R^{f(\frac{i}{T})}. \]
Then define $f(x)$ by linearly interpolating the values $\{f(\frac{i}{T})\}$.
    
\end{definition}

\begin{remark}
    {\rm
    To be more flexible, we may define $f$ in the way that
    \[ |\supp\nu|_{R^{-\frac{i}{T}}}\sim R^{f(\frac{i}{T})}. \]
    In this sense, the branching function is determined up to an error $O(\frac{1}{\log R})$, which is allowable for our use if $R$ is big enough.
    
    }
\end{remark}

It is easy to see $\nu([0,1)^2)\sim R^{f(\frac{i}{T})}w_i\sim R^{f(\frac{i}{T})}\nu (Q_{R^{-\frac iT}})$ for each $i$. For each $x\in [0,1]$, since $x$ is $\frac 1T$-close to some $\frac iT$ and $f$ is $2$-Lipschitz, we have
\begin{equation}\label{verifybranching}
    \frac{\nu(Q_{R^{-x}})}{\nu([0,1)^2)}\in [R^{-\frac 2T-f(x)}, R^{\frac 2T-f(x)}]
\end{equation}
whenever $Q_{R^{-x}}\cap \supp\nu\neq\emptyset$. If $T$ is large enough, then one may essentially view 
\[\nu(Q_{R^{-x}})\sim \nu([0,1)^2)R^{-f(x)}.\]

We recall the definition of Frostman/Katz-Tao conditions.
\begin{definition}\label{FKTdef}
    Let $f:[0,1]\rightarrow[0,\infty)$ be a Lipschitz function. For $[x_1,x_2]\subset [0,1]$ and $s\in[0,2]$, we say $f$ is $s$-\textbf{Frostman} on $[x_1,x_2]$ if for all $x\in [x_1,x_2]$,
    \[ f(x)\ge f(x_1)+s(x-x_1). \]
    We say $f$ is $s$-\textbf{Katz-Tao} on $[x_1,x_2]$ if for all $x\in [x_1,x_2]$,
    \[ f(x)\ge f(x_2)+s(x-x_2). \]
    
    For $0<\ga\ll s$,
we say $f$ is $s$-\textbf{Frostman} with error $\ga$ on $[x_1,x_2]$ if for all $x\in [x_1,x_2]$,
    \[ f(x)\ge f(x_1)+s(x-x_1)-\ga. \]
    We say $f$ is $s$-\textbf{Katz-Tao} with error $\ga$ on $[x_1,x_2]$ if for all $x\in [x_1,x_2]$,
    \[ f(x)\ge f(x_2)+s(x-x_2)-\ga. \]
    
If $f$ is $s$-Frostman (resp. $s$-Katz-Tao) with error $\ga$ on $[0,1]$, we simply say $f$ is $s$-Frostman (resp. $s$-Katz-Tao) with error $\ga$.
    
    For a measure $\nu$ that is $(w;T)$-uniform at scale $R$, we say $\nu$ is $s$-Frostman (resp. $s$-Katz-Tao) if its branching function is $s$-Frostman (resp. $s$-Katz-Tao).    
\end{definition}

The next lemma tells us that we can partition an $s$-Frostman measure into several measures that are both $s$-Frostman and $s$-Katz-Tao, up to small errors.

\begin{lemma}\label{partitionlem}
Suppose $0<\ga\ll s$.
    Let $\nu$ be a measure that is $(w;T)$-uniform at scale $R$ and is $s$-Frostman with error $\ga$. Then there are $M\sim \frac{|\supp\nu|_{R^{-1}}}{R^{s-C\ga}} $ measures $\nu^{(1)},\dots,\nu^{(M)}$ such that each $\nu^{(j)}$ is $s$-Frostman up to an exponent error $C\ga+O(1/\log R)$ with uniform comparability constant, and satisfies $|\supp \nu^{(j)}|_{R^{-1}}\sim R^{s-C\ga}$. Also, $\nu=\sum_{j=1}^M \nu^{(j)}$. Moreover, in the comparability convention of the preceding remark, all $\nu^{(j)}$ admit a common branching profile $\wt f$ with $\wt f(\frac{i}{T})=\min\{f(\frac{i}{T}),s-C\ga\}$ and $|\supp\nu^{(j)}|_{R^{-i/T}}\sim R^{\wt f(i/T)}$. Here, $C$ is a universal constant.
\end{lemma}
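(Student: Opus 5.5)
The plan is to cut $\supp\nu$ at a single scale. Let $x^*\in[0,1]$ be the first point at which $f(x^*)\ge s-C\ga$; up to rounding, take $x^*=i^*/T$. Since $f$ is $s$-Frostman with error $\ga$,
\[
f(1)\ge s-\ga,
\]
so $x^*$ exists once $C\ge 1$, with an adjustment of $O(1/T)$ that can be absorbed into $C\ga$ because $T^{-1}\ll\ga$. Here $f(1)=\log_R|\supp\nu|_{R^{-1}}$.

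The measures $\nu^{(j)}$ will be restrictions of $\nu$ to unions of $R^{-x^*}$-cubes. Each such cube $Q$ carries $|\supp\nu\cap Q|_{R^{-1}}\sim R^{f(1)-f(x^*)}$ many $R^{-1}$-cubes, by uniformity.

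If $f(x^*)$ were exactly $s-C\ga$, we would take $M=|\supp\nu|_{R^{-x^*}}=R^{f(x^*)}$ pieces, one per cube. Each piece would then have $R^{-1}$-covering number $R^{f(1)-f(x^*)}$. That is the reverse of what we want: the statement asks for pieces of size $R^{s-C\ga}$ and count $R^{f(1)-(s-C\ga)}$. So the correct move is the opposite one. Each piece should take \emph{one} $R^{-x}$-subcube from many different $R^{-x^*}$ parents. More precisely, we build each $\nu^{(j)}$ as a union over a transversal.

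\textbf{Construction.} Go down the dyadic tree of $\supp\nu$. At level $i\le i^*$, the tree branches by $R^{f(i/T)-f((i-1)/T)}$. We keep the full branching at all levels $i\le i^*$, so each piece covers $R^{f(x^*)}$ cubes of side $R^{-x^*}$. Below $x^*$, each piece chooses exactly one child at every level. To partition the whole measure, we note that inside each $R^{-x^*}$-cube $Q$ the $R^{-1}$-cubes number $K=R^{f(1)-f(x^*)}$ (comparable across $Q$ by uniformity). We enumerate them in each $Q$ by $1,\dots,K$, in a tree-compatible way. Then we let $\nu^{(j)}$ be $\nu$ restricted to the union over all $Q$ of the $j$-th $R^{-1}$-cube of $Q$.

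This gives
\[
M\sim K=R^{f(1)-f(x^*)}
\qquad\text{and}\qquad
|\supp\nu^{(j)}|_{R^{-1}}\sim R^{f(x^*)}\sim R^{s-C\ga}.
\]
The branching profile of each $\nu^{(j)}$ is $\min\{f,f(x^*)\}$, with comparable constants; uniformity of $\nu$ gives this uniformity. Because the branching numbers are only comparable and not equal, the counts $K$ vary across $Q$ by constant factors. Pieces with missing indices are handled by padding or merging, which only costs constants.

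\textbf{Frostman check.} The profile $\wt f=\min\{f,s-C\ga\}$ satisfies $\wt f(x)\ge sx-C\ga$ whenever $x\le x^*$, because $f(x)\ge sx-\ga$ there. For $x\ge x^*$, $\wt f(x)\approx s-C\ga\ge sx-C\ga$ since $x\le1$. So each piece is $s$-Frostman with error $C\ga+O(1/\log R)$.

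The main obstacle is the bookkeeping that makes the pieces genuinely uniform, with a common profile and uniform comparability constants, when the branching is only known up to factors of $2$. One resolution is a further pigeonhole or uniformization within each parent cube. Another is to allow the $\sim$ convention from the Remark, which the statement explicitly permits.
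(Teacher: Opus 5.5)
\textbf{Verdict: genuine gap.} Your transversal construction has the right overall shape: full branching down to a cut level, then one descendant per cube below it. That is also what the paper's iterated splitting produces. As written, though, it does not reach the exponent $s-C\ga$.

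\textbf{The overshoot.} You cut at the first grid level $i^*$ with $f(i^*/T)\ge s-C\ga$ and keep \emph{all} children at that level. So your pieces satisfy $|\supp\nu^{(j)}|_{R^{-1}}\sim R^{f(i^*/T)}$, have profile $\min\{f,f(i^*/T)\}$, and number $M\sim R^{f(1)-f(i^*/T)}$. Since $f$ can rise by as much as $2/T$ between consecutive grid points, $f(i^*/T)$ may exceed $s-C\ga$ by up to $2/T$. You are therefore off by a factor as large as $R^{2/T}$. For fixed $T$ and $R\to\infty$ this is neither a constant factor nor an $O(1/\log R)$ exponent error. Hence the step $R^{f(x^*)}\sim R^{s-C\ga}$ fails, and with it the claims on $M$, on $|\supp\nu^{(j)}|_{R^{-1}}$, and $\wt f(i/T)=\min\{f(i/T),s-C\ga\}$ for $i\ge i^*$.

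\textbf{Why your fix does not work.} You propose absorbing the $O(1/T)$ into $C\ga$ because $T^{-1}\ll\ga$. This is not available. The lemma assumes no relation between $\ga$ and $T$, and in the paper's application $\ga=O(T^{-1})$. In any case the overshoot depends on $f$, so it cannot be absorbed into a universal $C$ while keeping the exponent pinned to $s-C\ga$. The constant-factor variation of $K$ that you flag as the main obstacle is harmless by comparison.

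\textbf{The missing idea.} You need a \emph{partial} split at the crossing level, which is what the paper does.
\begin{itemize}
\item Take $i^*$ minimal with $f(i^*/T)>s-C\ga$.
\item Inside each $R^{-(i^*-1)/T}$-cube $Q'$ meeting $\supp\nu$, there are $\sim R^{f(i^*/T)-f((i^*-1)/T)}$ children at scale $R^{-i^*/T}$, uniformly in $Q'$. Divide them into $A\sim R^{f(i^*/T)-(s-C\ga)}$ groups of $\sim R^{s-C\ga-f((i^*-1)/T)}$ children each. This group size is $\ge 1$ precisely because $f((i^*-1)/T)\le s-C\ga$ by minimality of $i^*$.
\item Let a piece take one group from every $Q'$. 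Its covering number at scale $R^{-i^*/T}$ is then $\sim R^{f((i^*-1)/T)}\cdot R^{s-C\ga-f((i^*-1)/T)}=R^{s-C\ga}$.
\item Only after this pass to one descendant per cube at the finer levels, via your transversal or the paper's repeated application of the same split.
\end{itemize}
This yields exactly the common profile $\min\{f,s-C\ga\}$ and $M\sim|\supp\nu|_{R^{-1}}/R^{s-C\ga}$.

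Your weaker version, with exponent within $O(1/T)$ of $s-C\ga$, would in fact suffice where the lemma is used in Section \ref{sec5}. There only $\al-O(T^{-1})\le f^*(1)\le\al$ matters, provided $C\ga$ is a large multiple of $T^{-1}$. But it is not the lemma as stated.
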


\begin{proof}
    Let $X$ be the union of cubes in $\cD_{R^{-1}}$ that intersect $\supp\nu$. Let $f$ be the branching function of $\nu$. Then 
    \[ |X|_{R^{-x_i}}\sim R^{f(x_i)} \]
    for $0\le i\le T$. Here we let $x_i=\frac{i}{T}$. That $f$ is $s$-Frostman with error $\ga$ means $f(x)\ge s x-\ga$. We need to modify $f$ so that its value at $x=1$ is $s-C\ga$.

    More precisely, our goal is to find a partition $X=\bigsqcup_{j=1}^M X^{(j)}$ such that for each $j$,
    \[ |X^{(j)}|_{R^{-x_i}}\sim R^{\wt f(x_i)} \]
    for $0\le i\le T$. Then we just set $\nu^{(j)}=\nu|_{X^{(j)}}$ and finish the proof.

    Next, we construct these $X^{(j)}$. First, let $i$ be the minimal index such that $|X|_{R^{-x_i}}>R^{s-C\ga}$. For each $Q'\in \cD_{R^{-x_{i-1}}}$ such that $Q'\cap X\neq \emptyset$, we equally divide $X\cap Q'$ into $A\sim R^{f(x_i)-s+C\ga}$ pieces 
    \[X\cap Q'= \bigsqcup_{i=1}^A X_i(Q') ,\]
    each of which is a union of about $\frac{|Q'\cap X|_{R^{-x_i}}}{R^{f(x_i)-s+C\ga}}$ dyadic $R^{-x_i}$-cubes.
    This is allowable since 
    \[\frac{|Q'\cap X|_{R^{-x_i}}}{R^{f(x_i)-s+C\ga}}\sim \frac{|X|_{R^{-x_i}}/|X|_{R^{-x_{i-1}}}}{R^{f(x_i)-s+C\ga}}\sim \frac{R^{s-C\ga}}{|X|_{R^{-x_{i-1}}}}\ge 1.\]
    The last inequality is because of the minimality of $i$.
    
    Now let $X_i=\bigcup_{Q'}X_i(Q')$. One can check the branching function of each $X_i$ is 
    \[ g(x)=\begin{cases}
        f(x) & x\le x_{i-1}\\
        f(x)-f(x_i)+s-C\ga & x\ge x_i,
    \end{cases} \]
    and the value of $g$ in $(x_{i-1},x_i)$ is by linear interpolation at the two endpoints.

Then we repeat the process for each $X_i$ whose branching function is $g$. After finitely many steps, we obtain $\{X^{(j)}\}$ with the common branching profile $\wt f$ in the above comparability sense.
    
\end{proof}

\section{A structure lemma for Lipschitz functions}\label{sec4}

In this section, we prove a structure lemma for Lipschitz functions. 
For convenience, we make the following definitions.
\begin{definition}[Admissible interval]\label{defadmissible}
    For $[a,b]\subset[0,1]$, we say $[a,b]$ is admissible if $1-a\le 2(1-b)$.
\end{definition}

\begin{definition}
    Let $f:[0,1]\rightarrow \R$. If there exists $0=a_0<a_1<\dots<a_M=1$ such that $f|_[a_{i-1},a_i]$ is linear for all $1\le i\le M$, then we say $f$ is piecewise linear with $M$ pieces. Each $[a_{i-1},a_i]$ is called an $f$-piece.
\end{definition}

The main result is as follows.

\begin{lemma}\label{preprestructurelem}
    Let $1<u\le \kappa\le 2$. Then there exists $\e(u,\kappa)>0$ such that for any $0<\e<\e(u,\kappa)$, the following holds. Let $f:[0,1]\rightarrow [0,\infty)$ be a piecewise linear function, satisfying: $f$ is increasing, $f(0)=0$, $f(1)=u$, $ux\le f(x)\le \kappa x$, $\textup{Lip}(f)\le 2$. 
    Then there exists a sequence $0=x_0<x_1<\dots<x_{k}<x_{k+1}=1$ such that  
    \begin{enumerate}
        \item For $1\le i\le {k}$, $f$ is $(1+\e)$-Frostman on $[x_{i-1},x_i]$ and $[x_{i-1},x_i]$ is admissible.
        \item $f(x_{k})=(2\kappa-1)x_{k}-\kappa+1+O(\e)$.
        \item At most $C(\e,u,\kappa)$ of the selected endpoints $x_0,\dots,x_k$ are not endpoints of the given $f$-pieces. In particular, $k\le \#\{f\textup{-pieces}\}+C(\e,u,\kappa)$ for some $C(\e,u,\kappa)$.
    \end{enumerate}
\end{lemma}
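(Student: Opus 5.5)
The plan is a greedy algorithm on the auxiliary function
\[ h(x):=f(x)-(1+\e)x. \]
By Definition \ref{FKTdef}, $f$ is $(1+\e)$-Frostman on $[a,b]$ exactly when $a$ minimizes $h$ on $[a,b]$. Since $f(x)\ge ux$ and $\e<u-1$, we have $h(x)\ge (u-1-\e)x\ge 0=h(0)$, so $x_0=0$ is a global minimizer of $h$ on $[0,1]$.

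Call $c\in[0,1]$ a \emph{right record} if $h(c)\le h(x)$ for all $x\in[c,1]$. The set of right records is closed and contains $0$ and $1$. Because $h$ is piecewise linear, every right record other than $1$ is either an endpoint of an $f$-piece or lies in an interval on which $h$ is constant. In the latter case I will always pick the left or right endpoint of that interval, which is again a breakpoint.

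I will maintain the invariant that each $x_{i-1}$ is a right record. Given such an $a=x_{i-1}$, I set
\[ x_i:=\sup\{\,c\le \tfrac{1+a}{2}: c \text{ a right record}\,\}. \]
This gives Frostman on $[a,x_i]$ for free, since $a$ minimizes $h$ on $[a,1]$. It also gives admissibility, since $x_i\le\frac{1+a}{2}$ means $1-a\le 2(1-x_i)$. Each new point is either a breakpoint of $f$ or exactly the halving point $\frac{1+a}{2}$.

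I stop at the first index $k$ such that $(x_k,f(x_k))$ lies within $O(\e)$ of, or below, the line $\overline{IF}$, that is $y=(2\kappa-1)x-\kappa+1$. Initially $f(0)=0>1-\kappa$, so the start is strictly above the line. At $x=1$ we have $f(1)=u<\kappa$, so the endpoint is strictly below it. The line meets $y=ux$ at $x=\frac{\kappa-1}{2\kappa-u-1}$ and meets $y=u$ at $x=\frac{u+\kappa-1}{2\kappa-1}<1$. Hence any stopping point satisfies $1-x_k\gtrsim_{u,\kappa}1$.

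The count in (3) then follows. A halving step cuts $1-x$ in half, and $1-x$ stays $\gtrsim 1$ before stopping, so there are at most $C(u,\kappa)$ halving steps. Every other selected point is a breakpoint of $f$. Together with the non-piece points in the final-adjustment step below, this gives $C(\e,u,\kappa)$.

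The main obstacle is showing that the algorithm makes progress, and overshoots the line by only $O(\e)$, as long as $(a,f(a))$ lies above $\overline{IF}$. The bad case is when no right record lies in $(a,\frac{1+a}{2}]$ beyond $a$ itself. Then the next right record is some $c>\frac{1+a}{2}$ with $h(c)<h(x)$ for all $x\in(a,c)$, which forces $f(c)-f(a)\le(1+\e)(c-a)$. I will derive a contradiction from the following constraints:
\begin{itemize}
\item[(a)] the lower bound $f(c)\ge u-2(1-c)$, from $\textup{Lip}(f)\le 2$ and $f(1)=u$;
\item[(b)] the upper bound $f(a)\le\kappa a$;
\item[(c)] the fact that $c$ is a right record, so $f$ has average slope $\ge 1+\e$ on $[c,1]$;
\item[(d)] the bound $c-a\ge\frac{1-a}{2}$.
\end{itemize}
My first attempt is to combine these and check that the contradiction fails precisely once $f(a)\le(2\kappa-1)a-\kappa+1+O(\e)$. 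The slope $2\kappa-1$ and the pass-through point $(1,\kappa)$ should emerge from balancing (b) against (a) and (d). If the raw greedy step jumps strictly below the line, I will replace $x_k$ by the crossing point of $f$ with the line inside $[x_{k-1},x_k]$. Frostman and admissibility are inherited on subintervals that start at $x_{k-1}$. This adds only one non-breakpoint, and the $O(\e)$ in (2) absorbs the error. If the four constraints above do not close this step, I would refine the invariant, for example by also tracking the maximal slope of $f$ on $[a,1]$ or by allowing the weaker Frostman condition with error $\e$.
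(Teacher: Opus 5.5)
Your proof has a genuine gap at the progress step, and combining (a)--(d) differently will not close it. The invariant that every $x_{i-1}$ is a right record is too strong, and with it the algorithm can stall far above the line.

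Take $u=\frac54$, $\kappa=2$, so $\overline{IF}$ is $y=3x-1$. Let $f(x)=2x$ on $[0,\frac12]$, $f\equiv1$ on $[\frac12,\frac7{10}]$, and $f$ linear from $(\frac7{10},1)$ to $(1,\frac54)$; all hypotheses hold. Then $h=(1-\e)x$ on $[0,\frac12]$, and $h$ is strictly decreasing on $[\frac12,1]$ with $h(1)=\frac14-\e$. So the right records are exactly $[0,c_*]\cup\{1\}$, where $c_*=\frac{1/4-\e}{1-\e}<\frac14$.

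From any right record $a\le c_*$, admissibility forces the next point into $[0,\frac{1+a}2]\subset[0,\frac58]$. On that interval $f(x)-(3x-1)\ge\frac18$. Hence your algorithm stalls at $x_1=c_*$, with $(c_*,2c_*)$ far above the line, and no sequence obeying your invariant can ever satisfy (2). Constraints (a)--(d) all hold with $a=c_*$, $c=1$ (indeed $f(c_*)=\kappa c_*$ and $h(c_*)=h(1)$), so the contradiction you are looking for does not exist.

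The lemma is nevertheless true for this $f$: the sequence $0<2q-1<q<1$ with $q=\frac{2-2\e}{3-\e}$ works. Its middle point $2q-1\approx\frac13$ fails to be a right record by about $\frac1{12}$, not $O(\e)$, so allowing a Frostman error $\e$ does not rescue the invariant. The example also shows that right records need not be breakpoints or lie on flat pieces of $h$: $c_*$ is interior to a piece where $h$ strictly increases. So your count for (3) is also unjustified.

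The missing idea is to fix the terminal point first and build the chain backward, asking each left endpoint to minimize $h$ only on its own interval. Let $q$ be the first intersection of the graph of $f$ with $L(x)=(2\kappa-1-\e)x-\kappa+1+2\e$, a line just above $\overline{IF}$. Then:
\begin{itemize}
\item $f(q)=(2\kappa-1)q-\kappa+1+\e(2-q)$, which gives (2);
\item $f\ge L$ on $[0,q]$;
\item $f(q)\le u$ gives $1-q\ge\e/3$. This is only $\gtrsim\e$ when $u=\kappa$, which also breaks your claim $1-x_k\gtrsim1$, but it is enough.
\end{itemize}
Starting from $b=q$, put $a_0=\max\{0,2b-1\}$ and let $a$ be the leftmost minimizer of $h$ on $[a_0,b]$, so $[a,b]$ is Frostman and admissible. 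One has $a<b$. If $a_0=0$, then $h(b)\ge(u-1-\e)b>h(0)$. If $a_0>0$, then
\[
h(a_0)\le(\kappa-1-\e)(2b-1)<(\kappa-1-\e)(2b-1)+\e\le h(b),
\]
using $f(a_0)\le\kappa a_0$ and $f(b)\ge L(b)$.

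So the ``above the line'' information is used at the right endpoint $b$, together with the upper bound at $2b-1$. In your bad case this inequality gives a contradiction only if the next right record $c$ itself lies on or above $L$: apply it at $b=c$, noting $2c-1\in(a,c)$. That is exactly what fails in the example. Iterating with $b=a$ down to $0$, each new left endpoint is either a breakpoint or equals $2b-1$. Intervals of the latter kind have length $1-b\ge1-q\ge\e/3$, which gives termination and (3).
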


\begin{figure}[ht]
\centering
\begin{minipage}[b]{0.85\linewidth}
\includegraphics[width=9cm]{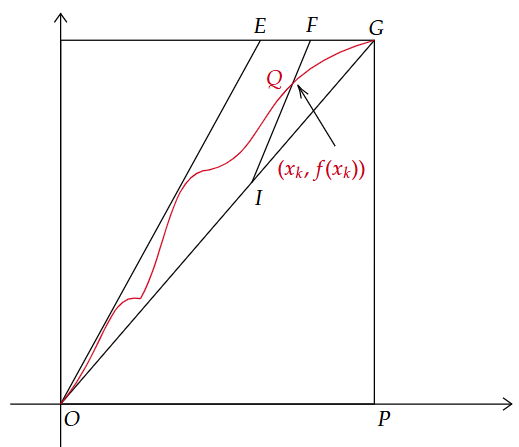}
\caption{}
\label{Fig1}
\end{minipage}
\end{figure}

\begin{proof}

We first draw a graph as in Figure \ref{Fig1}. The coordinates of marked points are: 
\begin{enumerate}
    \item $O=(0,0)$;
    \item $P=(1,0)$;
    \item $E=(\frac{u}{\kappa},u)$;
    \item $F'=(\frac{u+\kappa-1}{2\kappa-1},u)$;
    \item $G=(1,u)$;
    \item $I'=(\frac{\kappa-1}{2\kappa-u-1},\frac{u(\kappa-1)}{2\kappa-u-1})$;
    
\end{enumerate}

One can also check the following property. Given any point $Y$ on the line segment $\overline{F'I'}$ (here $\overline{AB}$ means the line segment with endpoints $A,B$), draw a line through $Y$ with slope $1$. Suppose this line intersects lines $\overline{OE}$ and $\overline{GP}$ at $Y_1$ and $Y_2$, then $Y$ is the midpoint of $\overline{Y_1Y_2}$. Indeed, one only needs to check for $Y=F',I'$. One can also check that the line through $F',I'$ is
\[ y=(2\kappa-1)x-\kappa+1. \]

Take $0<\e<(u-1)/4$ and put $s=1+\e$, 
$$h(x):=f(x)-sx.$$
Let $q$ be the first intersection of the graph of $f$ with the line
\[
L(x):=(2\kappa-s)x-\kappa+s+\e.
\]
Since $L(0)<f(0)$ and $L(1)>f(1)$, we have $0<q<1$. Moreover,
\begin{equation}
\label{displayed-id-f}
	f(q)=(2\kappa-1)q-\kappa+1+\e(2-q)
\end{equation}
and
$$1-q\ge\frac{\kappa-u+\e}{2\kappa-s}\ge\frac{\e}{3}.$$

Starting with $b=q$, put $a_0=\max\{0,2b-1\}$ and let $a$ be the leftmost minimizer of $h$ on $[a_0,b]$. 
\begin{itemize}
	\item If $a_0=0$, then $h(b)\ge(u-s)b>h(0)$;
	\item  If $a_0>0$, the upper bound $f(x)\leq\kappa x$ and the choice of $q$ give $$h(a_0)\le(\kappa-s)a_0<h(b).$$

\end{itemize}
In particular, $a\neq b$, thus $a<b$. Then the minimum property on the choice of $a$ shows that $f$ is $s$-Frostman on $[a,b]$, and $a\ge a_0\geq 2b-1$ makes this interval admissible. Repeat with $b=a$ until reaching $0$, then list the endpoints in increasing order, with $x_k=q$.

Since $h$ is piecewise linear, each new left endpoint is either an endpoint of a given $f$-piece or $a=a_0=2b-1$. In the latter case the interval has length $1-b\ge1-q\ge\e/3$. These intervals have disjoint interiors, so there are at most $3/\e+1$. This proves finite termination and (3). 

The displayed identity \eqref{displayed-id-f} for $f(q)$ proves (2), that completes the proof.
\end{proof}

As a remark, the preceding construction only uses the upper bound $f(x)\le\kappa x$ at $a_0$. Choose $\e_0=(u-1)/(4(3-u))$. If $0<a_0<\e_0$, then $b=(1+a_0)/2$, and the Lipschitz and lower bounds give instead
\[
h(a_0)\le(2-s)a_0<\frac{u-s}{2}(1+a_0)\le h(b).
\]
Thus the same construction gives the following stronger version.

\begin{lemma}\label{prestructurelem}
    Let $1<u\le \kappa\le 2$. Then there exists $\e_0=\e_0(u,\kappa)>0$ and $\e(u,\kappa)>0$ such that for any $0<\e<\e(u,\kappa)$, the following holds. Let $f:[0,1]\rightarrow [0,\infty)$ be a piecewise linear function, satisfying: $f$ is increasing, $f(0)=0$, $f(1)=u$, $f(x)\ge ux$ for $x\in[0,1]$, $f(x)\le \kappa x$ for $x\in [\e_0,1]$, $\textup{Lip}(f)\le 2$. Then there exists a sequence $0=x_0<x_1<\dots<x_{k}<x_{k+1}=1$ such that 
    \begin{enumerate}
        \item For $1\le i\le {k}$, $f$ is $(1+\e)$-Frostman on $[x_{i-1},x_i]$ and $[x_{i-1},x_i]$ is admissible.
        \item $f(x_{k})=(2\kappa-1)x_{k}-\kappa+1+O(\e)$.
        \item At most $C(\e,u,\kappa)$ of the selected endpoints $x_0,\dots,x_k$ are not endpoints of the given $f$-pieces. In particular, $k\le \#\{f\textup{-pieces}\}+C(\e,u,\kappa)$ for some $C(\e,u,\kappa)$.
    \end{enumerate}
\end{lemma}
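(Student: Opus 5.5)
We will rerun the backward construction from the proof of Lemma \ref{preprestructurelem} essentially verbatim, with $s=1+\e$, $h(x)=f(x)-sx$, and $q$ the first point where the graph of $f$ meets the line $L(x)=(2\kappa-s)x-\kappa+s+\e$. The only change is in the step justifying $a\neq b$: there we compared $h(a_0)$ with $h(b)$ using $f(a_0)\le\kappa a_0$, and now that bound is available only when $a_0\ge\e_0$. So we split into three cases: $a_0=0$, $a_0\ge\e_0$, and $0<a_0<\e_0$. Everything else carries over unchanged: the Frostman property from choosing $a$ as the leftmost minimizer, admissibility from $a\ge 2b-1$, termination and the count in (3) from piecewise linearity together with $1-q\ge\e/3$, and (2) from the identity \eqref{displayed-id-f}.

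\emph{Locating $q$.} We have $L(0)=-\kappa+s+\e<0=f(0)$ because $\kappa\ge u>1+2\e$. We also have $L(1)=\kappa+\e>u=f(1)$. Hence $q\in(0,1)$ exists without any upper bound on $f$. The lower estimate on $1-q$ comes from $f(q)\le u$ and $L(q)=f(q)$, which give $(2\kappa-s)(1-q)\ge \kappa-u+\e$. This does not use $f\le\kappa x$ either.

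\emph{Case $a_0=0$.} Here $h(b)\ge(u-s)b>0=h(0)$, as before.

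\emph{Case $a_0\ge\e_0$.} We use the original argument. Since $b\le q$ and $q$ is the first crossing, $f(b)\ge L(b)$. Then
\[h(b)\ge (2\kappa-2s)b-\kappa+s+\e.\]
With $a_0=2b-1$, this bound equals $(\kappa-s)a_0+\e$, which is strictly larger than $(\kappa-s)a_0\ge h(a_0)$. The last inequality is the only place where $f(a_0)\le\kappa a_0$ enters, and it is legitimate because $a_0\ge\e_0$.

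\emph{Case $0<a_0<\e_0$.} This is the case requiring care. Now $b=(1+a_0)/2$. The Lipschitz bound and $f(0)=0$ give $h(a_0)\le(2-s)a_0$. The lower bound $f\ge ux$ gives $h(b)\ge(u-s)b=\frac{u-s}{2}(1+a_0)$. So it suffices to show $(2-s)a_0<\frac{u-s}{2}(1+a_0)$, which is implied by
\[a_0\bigl(2(2-s)-(u-s)\bigr)<u-s.\]
With $\e\le(u-1)/4$ we have $u-s\ge\frac34(u-1)$, and the bracket is at most $3-u$. So the inequality holds once $a_0<\frac{3(u-1)}{4(3-u)}$, and this is guaranteed by $a_0<\e_0=(u-1)/(4(3-u))$.

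\emph{Where the difficulty lies.} The main thing to verify is this last numerical comparison and the matching choice of $\e_0$ and $\e(u,\kappa)$. One must also check that nothing else in the construction, such as the choice of $q$ or the identity \eqref{displayed-id-f}, silently used $f\le\kappa x$ near $0$. By the two checks under "Locating $q$", it does not.
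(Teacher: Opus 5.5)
Your proposal is correct and follows the paper's argument: the paper likewise reruns the construction of Lemma \ref{preprestructurelem}, notes that the bound $f(x)\le\kappa x$ is used only at $a_0$, and handles $0<a_0<\e_0$, with the same choice $\e_0=(u-1)/(4(3-u))$, via $h(a_0)\le(2-s)a_0<\frac{u-s}{2}(1+a_0)\le h(b)$. You also check explicitly that this numerical inequality holds and that neither locating $q$ nor the identity \eqref{displayed-id-f} uses the upper bound; the paper leaves both points implicit.
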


We need a more quantitative version of Lemma \ref{prestructurelem}, which takes care of the number of linear pieces for a Lipschitz function. We give the following definition which is the same as \cite[Definition 4.2]{shmerkin2023non}.

\begin{definition}
    We say $f$ is \textbf{linear} with error $\e$ on $[x_1,x_2]$ if for all $x\in [x_1,x_2]$,
     \[ |f(x)-f(x_1)-s_f(x_1,x_2)(x-x_1)|\le \e(x_2-x_1). \]
    Here, $s_f(x_1,x_2)=\frac{f(x_2)-f(x_1)}{x_2-x_1}$.
\end{definition}
We recall \cite[Lemma 4.4]{shmerkin2023non}.
\begin{lemma}\label{linearapproxlem}
    For every $\eta>0$, there is $\tau=\tau(\eta)>0$ such that the following holds: for any $2$-Lipschitz function $f:[0,1]\rightarrow \R$, there exists a family of non-overlapping intervals $\{[c_j,d_j]\}$ such that:
    \begin{enumerate}
        \item $f$ is linear on $[c_j,d_j]$ with error $\eta$.
        \item $d_j-c_j\ge \tau$.
        \item $|[0,1]\setminus \bigcup_j [c_j,d_j]|\le \eta$.
    \end{enumerate}
\end{lemma}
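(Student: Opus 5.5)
The plan is to prove this lemma by a dyadic energy-increment (pigeonhole) argument, as in \cite{shmerkin2023non}. The point of this route is that it gives $\tau$ depending only on $\eta$, uniformly over all $2$-Lipschitz $f$, with no compactness argument. We may assume $f(0)=0$.

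\emph{Setup.} For $m\ge 0$ and a dyadic interval $I\subset[0,1]$ of length $2^{-m}$, let $s(I)=s_f(I)$ be the slope of $f$ across $I$. Define the energy
\[ E_m:=\sum_{|I|=2^{-m}}|I|\,s(I)^2. \]
Since $s(I)$ is the average of the slopes of the two children of $I$, Jensen's inequality shows that $E_m$ is nondecreasing in $m$. Since $|s(I)|\le 2$, we have $E_m\le 4$. Moreover, for $K\ge1$,
\[ E_{m+K}-E_m=\sum_{|I|=2^{-m}}|I|\,\mathrm{Var}_I, \]
where $\mathrm{Var}_I$ is the variance of the slopes of the $2^K$ level-$(m+K)$ descendants of $I$, weighted uniformly. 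This is the Pythagorean identity for the dyadic martingale of slopes.

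\emph{Pigeonholing a scale.} First fix $K=K(\eta)$ with $2^{-K}\le \eta/8$. Then fix $N=N(\eta)$ so large that $4/N\le \eta^4$. Consider the levels $m_j=jK$ for $j=0,\dots,N$. The increments $E_{m_{j+1}}-E_{m_j}$ sum to at most $4$, so some $j\le N-1$ has $E_{m+K}-E_m\le 4/N\le\eta^4$, where $m=m_j$. We take
\[ \tau:=2^{-NK}\le 2^{-m}. \]
By Chebyshev's inequality, the union of the level-$m$ intervals $I$ with $\mathrm{Var}_I>\eta^3$ has total length at most $\eta$. These are the intervals we discard. The remaining intervals $I$ are our $[c_j,d_j]$, and each has length $2^{-m}\ge\tau$. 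This already gives items (2) and (3).

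\emph{Linearity on a good interval.} Write $I=[c,d]$, let $\sigma=s(I)$, and let $J_1,\dots,J_{2^K}$ be the children of $I$ at level $m+K$, listed in order. For a grid point $x$ at level $m+K$ inside $I$, telescoping gives
\[ f(x)-f(c)-\sigma(x-c)=\sum_{J_l\subset[c,x]}|J_l|\,(s(J_l)-\sigma). \]
By Cauchy--Schwarz, the absolute value of this sum is at most $|I|\,\mathrm{Var}_I^{1/2}\le \eta^{3/2}|I|$. For a general $x\in I$, we move to the nearest grid point. Since $f$ is $2$-Lipschitz and $|\sigma|\le 2$, this costs at most $4\cdot 2^{-m-K}\le \eta|I|/2$. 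Since $\eta^{3/2}\le\eta/2$ for small $\eta$, the total error is at most $\eta|I|$. This is item (1).

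The only delicate step is the pigeonholing of the scale $m$. One must use that the total energy is bounded uniformly in $f$, and compare scales $m$ and $m+K$ with a fixed gap $K$ rather than adjacent scales, because the variance over $2^K$ descendants is what controls the deviation from linearity at all intermediate points. Everything else is routine bookkeeping.
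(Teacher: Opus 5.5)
Your proof is correct and complete. The paper gives no proof of this lemma. It quotes it from \cite[Lemma 4.4]{shmerkin2023non} and uses it as a black box (with $\eta^4$ in place of $\eta$) in the proof of Lemma \ref{structlem}. So your argument is a self-contained substitute rather than a reconstruction of the paper's reasoning.

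The steps all check out:
\begin{itemize}
\item The identity $E_{m+K}-E_m=\sum_{|I|=2^{-m}}|I|\,\mathrm{Var}_I$ holds because $s(I)$ is exactly the uniform average of the slopes of its $2^K$ descendants.
\item Pigeonholing over the $N$ blocks of $K$ levels, followed by Chebyshev, gives (2) and (3).
\item Cauchy--Schwarz on the telescoped partial sums, plus rounding to the level-$(m+K)$ grid, gives (1).
\end{itemize}

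Two cosmetic remarks. First, $\eta^{3/2}\le\eta/2$ requires $\eta\le 1/4$, so you should replace $\eta$ by $\min\{\eta,1/4\}$ at the outset. Second, the Jensen monotonicity of $E_m$ is redundant: the variance identity already shows the increments are nonnegative.

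Compared with citing the lemma, or with a qualitative compactness proof (Arzel\`a--Ascoli together with Rademacher's theorem and a Vitali-type covering), your energy-increment argument buys two things. It gives an explicit dependence $\tau(\eta)=2^{-NK}$ with $N\sim\eta^{-4}$ and $K\sim\log(1/\eta)$. It also gives extra structure: all the retained intervals are dyadic of one common length $2^{-m}$. Neither feature is needed for the application in this paper, where only the existence of some $\tau(\eta)$ matters.
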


Now, we state the structure lemma.
\begin{lemma}\label{structlem}
    Suppose $1<u\le\kappa\le 2$. Then there exists $\e_0=\e_0(u,\kappa)>0$, $\e(u,\kappa)>0$. For any $0<\e<\e(u,\kappa)$, there exists $\eta(\e,u,\kappa)>0$. For any $0<\e<\e(u,\kappa)$ and $0<\eta<\eta(\e,u,\kappa)$, there exists $\tau=\tau(\eta)>0$ such that the following holds. Let $f:[0,1]\rightarrow [0,\infty)$ be a Lipschitz function, satisfying: $f$ is increasing, $f(0)=0$, $f(1)=u$, $f(x)\ge ux$ for $x\in[0,1]$, $f(x)\le \kappa x$ for $x\in [\e_0,1]$, $\textup{Lip}(f)\le 2$. Then there exist $k\le 3\tau^{-1}+C(\e,u,\kappa)$ and $k$ disjoint intervals $\{[a_j,b_j]\}_{j=1}^k$ such that 
    \begin{enumerate}
        \item For $1\le j\le {k}$, $[a_j,b_j]$ is admissible and
        \[ f(x)\ge f(a_j)+(1+\e)(x-a_j)-\eta(b_j-a_j)\ \ \ \ x\in[a_j,b_j]. \]
        \item $|[0,b_k]\setminus \bigcup_{j=1}^k [a_j,b_j]|\le \eta$.
        \item $f(b_{k})=(2\kappa-1)b_{k}-\kappa+1+O(\e)$.
    \end{enumerate}
Moreover, the intervals may be chosen with $b_j-a_j\ge\ell_0(\e,\eta,u,\kappa)>0$.
\end{lemma}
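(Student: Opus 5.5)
The natural route is to reduce Lemma \ref{structlem} to the piecewise linear Lemma \ref{prestructurelem}. We replace $f$ by a piecewise linear function $g$ built from Lemma \ref{linearapproxlem}, and then transfer the Frostman intervals back to $f$, paying an $\eta$-error.

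\textbf{Step 1: linear approximation.} Apply Lemma \ref{linearapproxlem} with a parameter $\eta'\ll\eta$ to obtain intervals $\{[c_j,d_j]\}$ on which $f$ is linear with error $\eta'$, each of length $\ge\tau$. There are at most $\tau^{-1}$ of them, and they miss a set of measure $\le\eta'$. Define $g$ by linearly interpolating $f$ at all the points $c_j,d_j$, together with $0$ and $1$. Then:
\begin{itemize}
\item $g$ is increasing and $2$-Lipschitz, with $g(0)=0$ and $g(1)=u$.
\item $g$ has at most $2\tau^{-1}+1$ pieces.
\item On $\bigcup[c_j,d_j]$ we have $|g-f|\le\eta'(d_j-c_j)$.
\end{itemize}
On a gap $G$ we only know $|f-g|\le 4|G|\le 4\eta'$. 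The bounds $g(x)\ge ux$ and $g\le\kappa x$ on $[\e_0,1]$ survive up to $O(\eta')$. Since $u<\kappa$ is fixed, one can restore them exactly by a tiny modification: either replace $u,\kappa$ by $u-C\eta'$, $\kappa+C\eta'$, or mix $g$ with the line $ux$. This changes the conclusions only by $O(\eta')\le O(\e)$.

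\textbf{Step 2: apply Lemma \ref{prestructurelem} to $g$.} This yields points $0=x_0<\dots<x_k=q$ with $g$ being $(1+\e)$-Frostman and admissible on each $[x_{i-1},x_i]$, and $g(q)=(2\kappa-1)q-\kappa+1+O(\e)$. We also get $k\le 2\tau^{-1}+1+C(\e,u,\kappa)\le 3\tau^{-1}+C$. The construction in the proof of Lemma \ref{prestructurelem} gives a little more: each new left endpoint is either a breakpoint of $g$ or satisfies $a=2b-1$, and $1-q\ge\e/3$. This is what yields the lower bound $\ell_0$ on the lengths once short intervals are discarded, as described next.

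\textbf{Step 3: transfer back to $f$.} This is the main obstacle, because on a gap $f$ may deviate from $g$ by an amount comparable to the gap length, not to $\eta(b-a)$. The plan is to shrink each interval $[x_{i-1},x_i]$ to $[a_i,b_i]$ whose endpoints lie in good intervals, moving each endpoint by at most the length of the adjacent gap. The point set $\{x_i\}$ consists of breakpoints $c_j,d_j$ (already good points) plus at most $C(\e,u,\kappa)$ exceptional points. Exceptional points lying in gaps are moved to the nearest good point.
\begin{itemize}
\item Admissibility $1-a\le2(1-b)$ is preserved when $a$ increases and $b$ decreases only if the shift is controlled. The inequality is strict by a margin except when $a=2b-1$; in that case we move $a$ right, which only helps, and we move $b$ left by at most $\eta'$, costing a factor we absorb by slightly increasing $a$.
\item Discard any interval of length $<\ell_0:=\sqrt{\eta'}$. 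The discarded and shrunk portions have total measure at most $\eta'+(k)\ell_0+C\eta'\le\eta$ once $\eta'$ is small in terms of $\tau$ and $\eta$. Here $\tau$ depends on $\eta'$, so the requirement must really be that $\sqrt{\eta'}\,\tau(\eta')^{-1}$ is small. If that fails, instead discard only the exceptional short intervals, of which there are $\le C(\e,u,\kappa)$; every other interval has breakpoint endpoints and length $\ge\tau$.
\end{itemize}
On a kept interval $[a,b]$, Frostman for $f$ follows from Frostman for $g$, provided we have $|f-g|\le C\eta'(b-a)$ at $a$ and on $[a,b]$. Inside good intervals this holds. On the gaps inside $[a,b]$ we have $|f-g|\le 4\eta'$, which is $\le\eta(b-a)$ since $b-a\ge\ell_0\gg\eta'/\eta$.

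Finally, conclusion (3) holds because $f(b_k)=g(b_k)+O(\eta')$ with $b_k$ within $\eta'$ of $q$, and $g$ is Lipschitz.

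The delicate bookkeeping is the order of quantifiers: choose $\eta'$ as a function of $\eta$, and obtain $\tau=\tau(\eta')$. The count $k\le3\tau^{-1}+C$ is what ultimately restricts how the shrinking and discarding may be done.
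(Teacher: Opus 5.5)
Your Steps 1--2 follow the paper's route, but the transfer in Step 3 has a genuine gap. On a kept interval $[a,b]$ with breakpoint endpoints, each gap $G\subset[a,b]$ contributes an error $|f-g|\lesssim|G|$. This error can be as large as $\eta'$, and you absorb it using $b-a\ge\ell_0\gg\eta'/\eta$. The only lower bound you actually have for such intervals, however, is $b-a\ge\tau(\eta')$. Lemma \ref{linearapproxlem} gives no relation between $\tau$ and $\eta'$: shrinking $\eta'$ also shrinks $\tau(\eta')$, and nothing prevents $\tau(\eta')\ll\eta'$. So neither of your two options can be arranged:
\begin{itemize}
\item your first option needs $\sqrt{\eta'}\,\tau(\eta')^{-1}\ll\eta$, since up to $\sim\tau^{-1}$ breakpoint intervals may have length in $[\tau,\sqrt{\eta'})$;
\item your fallback needs $\eta'\ll\eta\,\tau(\eta')$.
\end{itemize}
The order-of-quantifiers problem you flag at the end is therefore not resolved. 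The fallback also wrongly asserts that every non-exceptional interval has length $\ge\tau$. In fact $[x_{i-1},x_i]$ may be a pure gap $[d_s,c_{s+1}]$, arbitrarily short, on which the Frostman inequality for $f$ can fail outright.

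The missing idea is to select the breakpoint-endpoint intervals by \emph{relative gap density} rather than by absolute length. Take $\eta'=\eta^4$, let $I_b$ be the union of gaps, and keep such an $I=[a,b]$ only if $|I\cap I_b|\le\eta^2|I|$.
\begin{itemize}
\item Every gap $[d_{r-1},c_r]\subset I$ then has length $\le\eta^2|I|$. Monotonicity of $f$ gives, on that gap, $f(x)\ge f(d_{r-1})=g(d_{r-1})\ge f(a)+(1+\e)(x-a)-2\eta^2|I|$.
\item On the good pieces, $|f-g|\le\eta^2(d_l-c_l)$, and $f(a)=g(a)$.
\item Rejected intervals satisfy $|I|<\eta^{-2}|I\cap I_b|$, so their total length is $\le\eta^{-2}|I_b|\le\eta^2$.
\item Every kept interval contains a full $[c_l,d_l]$, so it has length $\ge\tau$.
\end{itemize}
An absolute length threshold is needed only for the $\le M(\e,u,\kappa)$ intervals with an endpoint outside $\{0,1,c_l,d_l\}$. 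Keep those of length $\ge\eta/(4M)$ and use $\|f-g\|_\infty\le2\eta^4$. This threshold does not depend on $\tau$, so the circularity disappears, and the discarded length is $\le\eta/4$. Conclusion (3) then follows from $|x_m-b_k|\le\eta\le\e$; note the bound is $\eta$, not $\eta'$ as you wrote.

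A few smaller points:
\begin{itemize}
\item No endpoint needs to be moved. Moving a left endpoint actually loses the Frostman reference point of $g$, up to an additive error of the size of the shift.
\item Shrinking never harms admissibility, since $a\ge 2b-1$ is preserved when $a$ increases or $b$ decreases.
\item The interpolant satisfies $g\ge ux$ exactly.
\item Adding $\e_0$ as a node gives $g\le\kappa x$ on $[\e_0,1]$ exactly. So there is no need to perturb $u$ and $\kappa$, which could push $\kappa$ above $2$.
\end{itemize}
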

\begin{proof}
    First, we apply Lemma \ref{linearapproxlem} to $f$ and with $\eta$ replaced by $\eta^4$, to obtain intervals $\{[c_l,d_l]\}_{l=1}^L$ so that the three properties in Lemma \ref{linearapproxlem} hold:
     \begin{enumerate}
        \item $f$ is linear on $[c_l,d_l]$ with error $\eta^2$. (If it is with error $\eta^4$, then it is with error $\eta^2$.)
        \item $d_l-c_l\ge \tau=\tau(\eta)$.
        \item $|[0,1]\setminus \bigcup_l [c_l,d_l]|\le \eta^4$.
    \end{enumerate}

    We define a new function $g:[0,1]\rightarrow \R$ that is the piecewise-linear interpolation of the data $(0,f(0)), (c_l,f(c_l)),(d_l,f(d_l)) (1\le l\le L), (1,f(1))$, with $(\e_0,f(\e_0))$ also included. Thus $g$ has at most $2\tau^{-1}+2$ linear pieces and satisfies the upper bound on $[\e_0,1]$. Also, since $g$ is defined by linear interpolation, the original $\eta^4$ approximation implies
    \[ |f(x)-g(x)|\le \eta^2(d_l-c_l) \]
when $x\in[c_l,d_l]$. Also, $\|f-g\|_\infty\le2\eta^4$ because the total length of the gaps is at most $\eta^4$.

    Applying Lemma \ref{prestructurelem} to $g$, we obtain a sequence $0=x_0<x_1<\dots<x_m<x_{m+1}=1$ so that 
    \begin{enumerate}
        \item For $1\le i\le m$, $[x_{i-1},x_i]$ is admissible and $g$ is $(1+\e)$-Frostman on $[x_{i-1},x_i]$.
        \item $g(x_m)=(2\kappa-1)x_m-\kappa+1+O(\e)$.
        \item At most $C(\e,u,\kappa)$ endpoints are not among the interpolation nodes. 
    \end{enumerate}  
    Let $\I_1$ be the collection of $[x_{i-1},x_i]$ such that both $x_{i-1}$ and $x_i$ belong to $\{0,1,c_l,d_l:1\le l\le L\}$; let $\I_2:=\{[x_{i-1},x_i]\}_{i=1}^m\setminus \I_1$. Then $\#\I_2\le M$ for some $M=M(\e,u,\kappa)\ge1$, independent of $f$ and $\eta$. 
    
    Now, we are ready to define $\{[a_j,b_j]\}_{j=1}^k$. Denote $I_b:=[0,1]\setminus \bigcup_l[c_l,d_l]$. Then
    \[|I_b|\le \eta^4.\]
    Define $\I'_1$ to be those $[x_{i-1},x_i]\in\I_1$ such that 
    \[|I_b\cap [x_{i-1},x_i]|\le \eta^2(x_{i}-x_{i-1}).\] 
    Define $\I'_2$ to be those $[x_{i-1},x_i]\in \I_2$ such that $x_i-x_{i-1}\ge\eta/(4M)$. Finally, we define $\{[a_j,b_j]\}_{j=1}^k$ to be $\I_1'\bigcup \I_2'$.

    We verify (1) in the lemma. For convenience, we set $d_{-1}=0$. By definition, each $[a_j,b_j]$ is contained in some $[x_{i-1},x_i]$, so $[a_j,b_j]$ is admissible. We need to verify for any $x\in[a_j,b_j]$ that
    \[ f(x)\ge f(a_j)+(1+\e)(x-a_j)-\eta(b_j-a_j). \]

    Consider the case $[a_j,b_j]\in \I'_1$. Then $[a_j,b_j]$ is the union of intervals $[c_l,d_l]$ and $[d_{l-1},c_l]$. For example, if $a_j=c_s,b_j=d_t$ for some $s\le t$, then
    \[ [a_j,b_j]=[c_s,d_s]\bigcup [d_s,c_{s+1}]\bigcup\dots\bigcup[c_t,d_t]. \]
 If $x\in [c_r,d_r]\subset [a_j,b_j]$, then by the assumption that $f$ is linear on $[c_r,d_r]$ with error $\eta^2$, we have
 \[ f(x)\ge g(x)-\eta^2(d_r-c_r). \]
By the assumption that $g$ is $(1+\e)$-Frostman on $[a_j,b_j]$, we have
\[ g(x)\ge g(a_j)+(1+\e)(x-a_j). \]
Combining the two estimates and noting $g(a_j)=f(a_j)$, we obtain
\[ f(x)\ge f(a_j)+(1+\e)(x-a_j)-\eta^2(b_j-a_j). \]

If $x\in [d_{r-1},c_r]\subset [a_j,b_j]$, then by the previous discussion (noting $d_{r-1}\in [c_{r-1},d_{r-1}]$), we have
\[ f(d_{r-1})=g(d_{r-1})\ge f(a_j)+(1+\e)(d_{r-1}-a_j). \]
Since $[a_j,b_j]\in \I'_1$, we have $(c_r-d_{r-1})\le \eta^2(b_j-a_j)$. Therefore using that $f$ is increasing and $1+\e\le2$, we have
\[
f(x)\ge f(d_{r-1})\ge f(a_j)+(1+\e)(x-a_j)-2\eta^2(b_j-a_j).
\]
Assuming $\eta<1/2$, we finish this case.

\medskip

Now, consider the case $[a_j,b_j]\in\I'_2$. Since $b_j-a_j\ge\eta/(4M)$ and $\|f-g\|_\infty\le2\eta^4$, we have, for $x\in[a_j,b_j]$,
\begin{equation}
\begin{aligned}
f(x)&\ge f(a_j)+(1+\e)(x-a_j)-4\eta^4\\
&\ge f(a_j)+(1+\e)(x-a_j)-\eta(b_j-a_j),
\end{aligned}
\end{equation}
provided $16M\eta^2\le1$.

\bigskip
Next, we verify (2) of the lemma. Note that $b_k\le x_m$. We will actually prove a stronger estimate:
\begin{equation}\label{justprove}
    |[0,x_m]\setminus \bigcup_{j=1}^k[a_j,b_j]|\le \eta. 
\end{equation} 
Recall $\{[a_j,b_j]\}_{j=1}^k=\I_1'\bigcup \I'_2$ and $[0,x_m]=\bigcup_{I\in \I_1\cup\I_2} I$. We just need to verify
\[ \sum_{I\in \I_1\setminus \I'_1}|I|+\sum_{I\in \I_2\setminus \I'_2}|I|\le \eta. \]
If $I\in \I_1\setminus \I'_1$, then $|I_b\cap I|>\eta^2|I|$. Hence, we get
\[ \sum_{I\in \I_1\setminus \I'_1}|I|\le \eta^{-2}|I_b|\le \eta^2. \]
By definition of $\I'_2$, we have
\[ \sum_{I\in \I_2\setminus \I'_2}|I|\le M\frac{\eta}{4M}=\frac{\eta}{4}. \]
Combining the two estimates, we finish the proof of (2) in the lemma. Every retained interval in $\I_1$ contains a complete approximation interval, so all retained intervals have length at least $\ell_0=\min\{\tau(\eta),\eta/(4M)\}>0$.

\bigskip
Finally, we show (3) in the lemma. By \eqref{justprove}, we have $|x_m-b_k|\le \eta$. Since \[g(x_m)=(2\kappa-1)x_m-\kappa+1+O(\e),\]  we obtain
\[ g(b_k)=(2\kappa-1)b_k-\kappa+1+O(\e+\eta). \]
The uniform approximation gives
\[
|f(b_k)-g(b_k)|\le2\eta^4.
\]
Letting $\eta<\eta(\e,u,\kappa)\le \e$, we obtain
\[ f(b_k)=(2\kappa-1)b_k-\kappa+1+O(\e). \]

\end{proof}

For some technicalities, we need the following version, in which the condition $f(x)\ge ux$ is modified to $f(x)\ge ux-\de_0$, and the condition $f(1)=u$ is modified to $f(1)\in [u-\de_0,u]$. If $\de_0$ is much smaller than other parameters, one can expect that the structure lemma still holds. 

\begin{lemma}[Structure lemma]\label{structurelem}
    Suppose $1<u\le\kappa\le 2$. Then there exists $\e_0=\e_0(u,\kappa)>0$ and $\e(u,\kappa)>0$. For any $0<\e<\e(u,\kappa)$, there exists $\eta(\e,u,\kappa)>0$. For any $0<\e<\e(u,\kappa)$ and $0<\eta<\eta(\e,u,\kappa)$, there exists $\tau=\tau(\eta)>0$ and $\de_0=\de_0(\eta,\e,u,\kappa)>0$ such that the following holds. Let $f:[0,1]\rightarrow [0,\infty)$ be a Lipschitz function, satisfying: $f$ is increasing, $f(0)=0$, $f(1)\in [u-\de_0,u]$, $f(x)\ge ux-\de_0$ for $x\in[0,1]$, $f(x)\le \kappa x$ for $x\in [\e_0,1]$, $\textup{Lip}(f)\le 2$. Then there exist $k\le 3\tau^{-1}+C(\e,u,\kappa)$ and $k$ disjoint intervals $\{[a_j,b_j]\}_{j=1}^k$ such that 
    \begin{enumerate}
        \item For $1\le j\le {k}$, $[a_j,b_j]$ is admissible and
        \begin{equation}\label{modifiedf}
            f(x)\ge f(a_j)+(1+\e)(x-a_j)-\eta(b_j-a_j)\ \ \ \ x\in[a_j,b_j]. 
        \end{equation} 
        \item $|[0,b_k]\setminus \bigcup_{j=1}^k [a_j,b_j]|\le \eta$.
        \item $f(b_{k})=(2\kappa-1)b_{k}-\kappa+1+O(\e)$.
    \end{enumerate}
\end{lemma}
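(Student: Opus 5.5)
The plan is to reduce Lemma \ref{structurelem} to Lemma \ref{structlem} by perturbing $f$ into a function that satisfies the exact hypotheses $f(1)=u$ and $f(x)\ge ux$. The perturbation should change $f$ by only $O(\de_0)$ in sup norm. Once $\de_0\ll \eta\,\ell_0$, these losses can be absorbed into the $\eta(b_j-a_j)$ error term of \eqref{modifiedf}, thanks to the lower bound $b_j-a_j\ge\ell_0(\e,\eta,u,\kappa)$ from the end of Lemma \ref{structlem}.

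Concretely, I would define
\[ \tilde f(x):=\max\{f(x)+c x,\ ux\},\qquad c:=u-f(1)\in[0,\de_0]. \]
It is increasing, $\tilde f(0)=0$ and $\tilde f(1)=u$. By construction $\tilde f(x)\ge ux$. From $f\ge ux-\de_0$ we get $0\le\tilde f-f\le 2\de_0$. The Lipschitz constant is at most $\max\{2+\de_0,u\}$. To restore $\mathrm{Lip}\le 2$ exactly, I would instead run the argument with $u'=u-\de_0^{1/2}$ in place of $u$ and only add the linear correction on top of the max. Alternatively, I would note that the proofs of Lemmas \ref{prestructurelem} and \ref{structlem} use $\mathrm{Lip}\le 2$ only in the estimate $h(a_0)\le(2-s)a_0$ and in Lemma \ref{linearapproxlem}, so a slightly larger constant $2+\de_0$ is harmless after shrinking $\e_0$. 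The upper bound is also preserved with slack: on $[\e_0,1]$ we have $\tilde f\le\kappa x+2\de_0\le(\kappa+2\de_0/\e_0)x$. I would apply Lemma \ref{structlem} with $\kappa'=\kappa+2\de_0/\e_0$, capping at $2$ if necessary. The conclusion (3) for $\kappa'$ differs from that for $\kappa$ by $O(\de_0/\e_0)=O(\e)$, provided $\de_0\le\e\e_0$.

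Applying Lemma \ref{structlem} to $\tilde f$ with parameters $\e$ and $\eta/2$ yields intervals $[a_j,b_j]$. These are admissible, satisfy (2), have length at least $\ell_0$, and number at most $3\tau^{-1}+C$. For (1) I would write
\[ f(x)\ge\tilde f(x)-2\de_0\ge \tilde f(a_j)+(1+\e)(x-a_j)-\tfrac{\eta}{2}(b_j-a_j)-2\de_0\ge f(a_j)+(1+\e)(x-a_j)-\eta(b_j-a_j). \]
The last inequality holds once $4\de_0\le\eta\ell_0$. For (3), $|f(b_k)-\tilde f(b_k)|\le 2\de_0=O(\e)$. The dependence of $\ell_0$ on $\kappa'$ is uniform in $\kappa'$ near $\kappa$, since the constants $M,\tau$ in Lemma \ref{structlem} come from the proof and can be taken monotone.

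The main obstacle is the bookkeeping when the perturbed function violates a hypothesis of Lemma \ref{structlem} by a tiny amount. This happens with the Lipschitz bound, and with the upper bound if $\kappa=2$, where $\kappa'$ cannot exceed $2$. If this occurs I would bypass the black box and rerun the proof of Lemma \ref{prestructurelem} directly. Its key inequalities $L(0)<f(0)$, $L(1)>f(1)$, $h(a_0)<h(b)$ and $h(0)<h(b)$ all hold with margins of order $\e$ or $u-1$, so they survive $O(\de_0)$ perturbations. The Lemma \ref{structlem} approximation step is likewise stable under $\|f-\tilde f\|_\infty\le2\de_0$.
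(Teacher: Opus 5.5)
Your overall strategy matches the paper's. You replace $f$ by a function $\tilde f$ with $\|\tilde f-f\|_\infty=O(\de_0)$ that satisfies the exact hypotheses of Lemma \ref{structlem}, apply that lemma with $\eta/2$, and absorb the $O(\de_0)$ loss into $\frac{\eta}{2}(b_j-a_j)$ via $b_j-a_j\ge\ell_0$. As written, however, the reduction has a gap, and it is self-inflicted.

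The correction term $cx$ is unnecessary, because $f(1)\le u$ already gives $\max\{f(1),u\}=u$. It is precisely this term that can make $\tilde f$ $(2+c)$-Lipschitz. It also forces you to $\kappa'>\kappa$, which leaves the range $\kappa'\le 2$ when $\kappa=2$. So Lemma \ref{structlem} cannot be applied to your $\tilde f$ as a black box, and your patches do not close this:
\begin{itemize}
\item The $u'=u-\de_0^{1/2}$ variant is not spelled out. In any case it does not touch the Lipschitz defect, which comes from the branch $f+cx$, not from the branch $ux$.
\item The $\kappa'$ route makes $\ell_0$, and the thresholds $\e(u,\kappa')$ and $\eta(\e,u,\kappa')$, depend on $\de_0$. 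But $\de_0$ has to be chosen in terms of $\ell_0$, so the circularity is resolved only by an asserted, unproved uniformity in $\kappa'$.
\item ``Rerun the proof of Lemma \ref{prestructurelem}'' is a plan rather than an argument.
\end{itemize}

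Take instead $\tilde f=\max\{f,ux\}$, as the paper does. Then all hypotheses of Lemma \ref{structlem} hold exactly:
\begin{itemize}
\item $\tilde f$ is increasing, $\tilde f(0)=0$, $\tilde f(1)=u$ and $\tilde f\ge ux$;
\item $\mathrm{Lip}(\tilde f)\le\max\{2,u\}=2$;
\item on $[\e_0,1]$, $\tilde f\le\max\{\kappa x,ux\}=\kappa x$, since $u\le\kappa$.
\end{itemize}
Moreover $0\le\tilde f-f\le\de_0$, because $f\ge ux-\de_0$.

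With this choice the rest of your argument goes through verbatim, with $2\de_0$ replaced by $\de_0$. Your chain for (1) uses $\tilde f(a_j)\ge f(a_j)$ and needs only $2\de_0\le\eta\,\ell_0(\e,\eta/2,u,\kappa)$. Property (3) follows from $|f(b_k)-\tilde f(b_k)|\le\de_0\le\e$. No $\kappa'$, no Lipschitz bookkeeping, and no reopening of earlier proofs is needed.
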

\begin{proof}
We briefly explain how to prove it. Apply Lemma \ref{structlem} with $f$ replaced by $\wt f(x)=\max\{f(x),ux\}$, for which $0\le\wt f-f\le\de_0$ and $\eta$ replaced by $\frac12 \eta$. Instead of \eqref{modifiedf}, we have
\[ f(x)+\de_0 \ge f(a_j)+(1+\e)(x-a_j)-\frac12\eta(b_j-a_j). \]
By Lemma \ref{structlem}, $b_j-a_j\ge\ell_0(\e,\eta/2,u,\kappa)$. Taking $\de_0\le\min\{\e,\frac12\eta\ell_0(\e,\eta/2,u,\kappa)\}$ absorbs the error and preserves the stopping relation.
\end{proof}

\section{A standard reduction}\label{sec5}
We first make a standard reduction to Theorem \ref{maintheorem}. Recall that $\al=\dimH(E), \beta=\dimP(E)$. Without loss of generality, we assume $E\subset [0,1)^2$ and there exist $E_1,E_2\subset E$ so that $\dist(E_1,E_2)\gtrsim 1$ and $\dimH(E_1)=\dimH(E_2)=\al$. We may decrease $\al$ slightly, so that \eqref{cond1} and \eqref{cond2} still hold. By Frostman's lemma, $E_1$ (resp. $E_2$) admits a probability measure $\mu$ (resp. $\nu$) supported on them satisfying
\begin{equation}
    \mu(B(x,r))\lesssim r^\al\ (\textup{resp.~}\nu(B(x,r))\lesssim r^\al).
\end{equation}

By a classical result in geometric measure theory, packing dimension equals the modified upper box dimension.
Since $\overline{\dim}_{\textup{MB}}(E)=\dimP(E_2)\le \beta$, for any $\ga>0$, there exists a covering $E_2=\bigcup_{i=1}^\infty E_2^i$ with 
\[ \dimB(E_2^i)\le \beta+\ga. \]
Here, $\dimB$ is the upper box dimension. Recall the definition
\[ \dimB(E):=\limsup_{\de\rightarrow 0}\frac{\log |E|_\de}{-\log \de}, \]
\[ \overline{\dim}_{\textup{MB}}(E)=\inf_{E\subset \bigcup_{i=1}^\infty E_j}\sup_j\dimB(E_j).   \]

By pigeonholing, there exists $i_0$ such that $\nu(E_2^{i_0})\gtrsim \frac{1}{i_0^2}$. Now, we replace $E_2$ with $E_2^{i_0}$, and replace $\nu$ with $\frac{1}{\nu(E_2^{i_0})}\nu|_{E_2^{i_0}}$. Since $i_0$ is now fixed, we allow all implicit constants below depending on $i_0$. Then we still have the $\al$-Frostman condition $\nu(B(x,r))\lesssim r^\al$.

Since $\dimB(E_2)\le \beta+\ga$, for any $\ga>0$, there exists $R_\circ\gg 1$ such that for any $R\ge R_\circ$, we have
\[ |E_2|_{R^{-1}}\le R^{\beta+2\ga}. \]
We may replace $\beta$ with $\beta+2\ga$ and let $\ga$ be small enough so that \eqref{cond1} and \eqref{cond2} still hold.

From our reduction, we see that in order to prove Theorem \ref{maintheorem}, we just need to show the following.
\begin{proposition}\label{reducedprop}
Suppose $1< \al\le \beta\le 2$ satisfy
\eqref{cond1} and \eqref{cond2}.
    Let $E_1, E_2\subset [0,1)^2$ with $\dist(E_1,E_2)\gtrsim 1$. Assume $E_1$ (resp. $E_2$) admits a probability measure $\mu$ (resp. $\nu$) supported on them satisfying
\begin{equation}\label{frostcondition}
    \mu(B(x,r))\lesssim r^\al\ (\textup{resp.~}\nu(B(x,r))\lesssim r^\al)
\end{equation}
for all $x\in [0,1)^2$ and $r>0$.
Also assume there exists $R_\circ>1$ such that
\begin{equation}\label{packingcondition}
    |E_2|_{R^{-1}}\le R^\beta
\end{equation}
for all $R\ge R_\circ$.
Then there exists $x\in E_2$ such that $|\De_x(E_1)|>0$.

\end{proposition}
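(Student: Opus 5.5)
Proposition~\ref{reducedprop} will be proved by the $L^2$ method of \cite{liu20192,guth2020falconer}. The goal is an energy bound for the pushforward measures $d^x_*\mu$, where $d^x(y)=|x-y|$, that holds for $\nu$-most $x$. We decompose $\mu$ into frequency annuli $\mu_j$ at scales $R_j=2^j$ and perform a multiscale good-bad decomposition, in the spirit of \cite{liu2026lebesgue}, of each $\mu_j$ into wave packets adapted to tubes of width $R_j^{-1/2}$ (or a multiscale family of widths). Heavy wave packets, meaning tubes carrying too much $\nu$-mass, are discarded into $\mu_{j,\bd}$. By Orponen's radial projection theorem together with $\dist(E_1,E_2)\gtrsim1$, for $x$ outside a set of small $\nu$-measure the pushforward $d^x_*(\sum_j\mu_{j,\bd})$ is small in $L^1$. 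It therefore suffices to show that
\[
\int\|d^x_*\mu_\gd\|_{L^2}^2\,d\nu(x)<\infty .
\]
Following the standard computation, this reduces to a weighted $L^2$ bound for each $j$ of the form
\[
\int |\mu_{j,\gd}*\wh{\sigma}_r|^2\,d\nu\lessapprox R_j^{D-(\al-1)}
\]
with $D<\al-1$. Hence it suffices to prove \eqref{keyest2} with $D_\gd(\al,\beta)<\al-1$ for $F=\mu_{j,\gd}*\wh\sigma$-type functions, whose Fourier support lies in an $O(1)$-neighbourhood of $S^1_{R_j}$, with weight given by $\nu$ at scale $R_j$.

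Next we discretize $\nu$ at scale $R=R_j$. By Lemma~\ref{decomplemma}, $\nu=\sum_{i\le N}\nu^i+\nu^{N+1}$, where each $\nu^i$ is $(w^i;T)$-uniform and carries mass $\ge R^{-\de^2}$, and the tail $\nu^{N+1}$ has mass $\le R^{-\de^3}$. Since $N\lesssim(\log R)^{2T}$, the tail is handled by a trivial bound, and it costs only $R^{O(\de)}$ to treat each $\nu^i$ separately. For a fixed uniform piece, its branching function $f$ satisfies the following properties, which follow from \eqref{frostcondition}, \eqref{packingcondition} and \eqref{verifybranching}: $f$ is increasing and $2$-Lipschitz, $f(0)=0$, $f(x)\ge \al x-\de_0$, $f(1)\in[\al-\de_0,\al]$ (possibly after applying Lemma~\ref{partitionlem} to trim the final scale), and $f(x)\le\beta x$ on $[\e_0,1]$. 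The packing bound only applies at scales below $R_\circ^{-1}$, which for large $R$ is inside $[\e_0,1]$ in the exponent variable. We then apply the structure lemma, Lemma~\ref{structurelem}, with $u=\al$ and $\kappa=\beta$. This produces admissible intervals $[a_j,b_j]$ on which $f$ is $(1+\e)$-Frostman up to error $\eta$, covering $[0,b_k]$ up to measure $\eta$, and with $f(b_k)=(2\beta-1)b_k-\beta+1+O(\e)$.

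Along these intervals we run the ball inflation of Section~\ref{subsecballinflation}. On each $[a_j,b_j]$ we apply the rigorous form of Ingredient~\ref{lem1}, lossless inflation from radius $R^{-b_j}$ to $R^{-a_j}$, at a cost of $R^{O(\eta+\e)}$. On each gap we use the trivial inflation bound; since the gaps have total length $\le\eta$, they cost at most $R^{2\eta}$. From scale $R^{-b_k}$ down to $R^{-1}$ we use Ingredient~\ref{lem2}, the weighted $L^2$ estimate proved in Section~\ref{secweightedL2}. On $[b_k,1]$ the $u$-Katz-Tao hypothesis holds up to $\de_0$ errors because $f(1)\approx\al$ and $f$ is $2$-Lipschitz with $f(x)\ge \al x$; more precisely, $f(x)\ge f(1)+\al(x-1)$ follows after applying Lemma~\ref{partitionlem} if necessary. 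Chaining these estimates and repeating the computation \eqref{finalestimate}–\eqref{tocheck} gives
\[
D_\gd\le\max\Bigl\{\tfrac{\beta-\al}{2\beta-1},\ \tfrac{\beta-\al}{(\al+1)(2\beta-\al-1)}\Bigr\}+O(\e).
\]
Finally we check the algebra. The condition $\frac{\beta-\al}{2\beta-1}<\al-1$ is equivalent to \eqref{cond1}. The condition $\frac{\beta-\al}{(\al+1)(2\beta-\al-1)}<\al-1$ expands to $P(\al,\beta)<0$, which is \eqref{cond2}. Choosing $\e,\eta,\de$ small as in \eqref{choiceofpar} and summing over $j$ then gives the required finite energy, hence $|\De_x(E_1)|>0$ for some $x\in E_2$.

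The main obstacle is the rigorous good-bad framework. The good part must be defined compatibly across all inflation scales, so that Ingredient~\ref{lem1} is lossless at every intermediate radius. The bad part must still be controlled by radial projection estimates. We must also prove Ingredient~\ref{lem2} with the $|X|_{R^{-1}}$ gain. For the latter, the first thing we will try is a wave-packet incidence count: bound the number of $R^{-1}$-balls of $X$ meeting each $R^{-1/2}$-tube inside an $r$-ball using the Katz-Tao property, then interpolate between a Kakeya-type $L^2$ bound and the trivial bound. This interpolation is where the exponent $\frac{u}{u+1}$ should come from.
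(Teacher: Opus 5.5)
Your outline follows the paper's architecture (uniform pieces via Lemma~\ref{decomplemma} and Lemma~\ref{partitionlem}, the structure lemma, trivial inflation on the gaps, inflation on the $[a_j,b_j]$, the weighted estimate below $r_k$), and your algebra is correct: $\frac{\beta-\al}{2\beta-1}<\al-1$ is \eqref{cond1}, and $\frac{\beta-\al}{(\al+1)(2\beta-\al-1)}<\al-1$ is \eqref{cond2}. However, Ingredients~\ref{lem1} and~\ref{lem2} are informal heuristics, not available results. Together with the bad-part bound they are the actual proof, and you leave them open.

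\textbf{The good--bad decomposition and the bad part.} The decomposition cannot be set up before discretizing $\nu$. The scales $\rho_j=R^{-a_j}$, $r_j=R^{-b_j}$ come from the branching function of each uniform piece $\nu^*$, so the good part depends on the piece and on the $R^{-1}$-cube containing $y$. An $(r_j/\rho_j)$-cap is bad for an $r_j$-cube $Q_j$ when a nearby $r_j\times\rho_j$ tube in that direction has $\nu^*_{\tilde Q_j}$-mass above $R^{T^{-1/3}}(\rho_j/r_j)^{10\eta}\,r_j/\rho_j$, where $\tilde Q_j$ is the parent $\rho_j$-cube.

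Orponen's theorem alone does not make the discarded part small. It only controls the $\mu$-mass of long $(r_j/\rho_j)\times 1$ tubes through $y$. To reach the short, locally heavy tubes you also need the energy count $\sum_T\nu^*_{\tilde Q}(T)^2\lesssim_\e(\rho_j/r_j)^{\eta}$, taken over $r_j\times\rho_j$ tubes in all $\sim\rho_j/r_j$ directions, followed by Cauchy--Schwarz. This is Step~2 of Proposition~\ref{prop-good-bad}, and it is where the $(1+\e)$-Frostman property on $[a_j,b_j]$ is actually used. Lemma~\ref{ballinflation2} itself uses only admissibility (local orthogonality via \eqref{increspeed}), uniformity, and the bad-cap threshold. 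So your plan assigns the Frostman hypothesis to the wrong step and leaves the bad part uncontrolled.

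\textbf{The weighted $L^2$ estimate.} Your proposed route to Lemma~\ref{lemwtedL2}, namely $\|Ef\|_{L^2(X)}^2\lessapprox|X|^{\frac{s}{s+1}}\|f\|_2^2$, would not work as described. Counting $X$-balls per wave packet says nothing about interference on $X$ between wave packets of different directions. Converting per-tube counts into an $L^2(X)$ bound amounts to a (multiscale) Mizohata--Takeuchi-type inequality, which is not known in this generality.

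The rigorous decay-based substitute is Schur's test with $|\wh\sigma(z)|\lesssim|z|^{-1/2}$ plus the Katz--Tao condition. It gives only $|X|^{1-\frac{1}{2s}}$, which is worse than $|X|^{\frac{s}{s+1}}$ for $s>1$. In the final computation it turns $\frac{\beta-\al}{(\al+1)(2\beta-\al-1)}$ into the strictly larger $\frac{\beta-\al}{2(2\beta-\al-1)}$, so you would not recover \eqref{cond2}.

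The paper instead runs a broad--narrow argument with $K=R^{\e^2}$:
\begin{itemize}
\item \emph{Broad part.} Bilinear restriction and Cauchy--Schwarz give $|X|^{1/2}\le|X|^{\frac{s}{s+1}}$, using $s\ge1$.
\item \emph{Narrow part.} Induction on scales. Pigeonhole $K\times K^2$ rectangles by their $X$-mass, then randomly sample so that after parabolic rescaling one has a Katz--Tao set at scale $R/K^2$ of size $\approx K^{-(1+s)}|X|$.
\end{itemize}
The exponent $\frac{s}{s+1}$ is exactly the one making $K^{s-1}\cdot K\cdot\bigl(K^{-(1+s)}\bigr)^{\frac{s}{s+1}}=1$, so the induction closes with the $K^{-2\e}$ gain. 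This induction-on-scales step is the idea your proposal lacks.
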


Next, we follow the argument in \cite[Section 2]{guth2020falconer} to perform a wave packet decomposition.
We choose $R_0$ that is much larger than $R_\circ$ in Proposition \ref{reducedprop} (to be determined later). Indeed, we will choose
\begin{equation}\label{R0tbd}
    R_0\ge R_\circ^{CT}, 
\end{equation} 
where $T$ is the parameter we introduced before, $C$ is a large constant.

Let $R_\iota=2^\iota R_0$ ($\iota=1,2,\dots$). For each $\io\ge 1$, cover the unit circle by $R_\io^{-1/2}$-caps $\theta$. For each such $\theta$, find a rectangular block $B_\theta$ with dimensions $\sim R_\io^{1/2}\times R_\io$, with the long direction of each $\tau$ radial. $\{B_\theta\}$ is chosen so that they cover the annulus $R_{\iota-1}\le |\om|\le R_\iota$ (in the frequency space). We choose a partition of unity subordinate to this cover, so that
\begin{equation}\label{def-psi-i-theta}
    1=\psi_0+\sum_{\iota\ge 1}\sum_{\theta: R^{-1/2}_\io\textup{-cap}}\psi_{\iota,\theta}. 
\end{equation} 
% For each $\io$, we define 
% \[ \psi_\io=\sum_\tau \psi_{\io,\tau}, \]
% where the summation is over $\tau$ with dimensions $R_\io^{1/2}\times R_\io$. In other words, $\psi_\tau$ is essentially the bump function on $B_{R_\io}\setminus B_{R_\io/2}$.

Let $\de>0$ be a small number to handle the rapidly decaying tail.
For each $(\iota,\theta)$, cover $[-1,2]^2$ (in the physical space) by tubes $T$ of dimensions $\sim R_\iota^{-1/2+\de}\times 1$ with long axis parallel to the long axis of $B_\theta$. Let $\T_{\iota,\theta}$ be the collection of all these tubes, and let $\eta_T$ be a partition of unity subordinate to this covering, so that for each choice of $\iota$ and $\theta$, $\sum_{T\in \T_{\iota,\theta}}\eta_T=1$ on $B(0,10)$. Denote $\T_\io=\bigcup_\theta \T_{\io,\theta}$.

Define $M_0 f:=(\psi_0 \wh f)^\vee$. 
% For each $\io$, define
% \begin{equation}\label{MT2}
%     M_\io f:= (\psi_\tau \wh f)^\vee.
% \end{equation}
For each $T\in\T_{\iota,\tau}$, define
\begin{equation}\label{MT}
    M_T f:=\eta_T(\psi_{\iota,\theta}\wh f)^\vee.
\end{equation}
Morally speaking, the physical support of $M_T f$ is $T$, while the Fourier support of $M_T f$ is $B_\theta$.

The rest of the paper is devoted to the proof of Proposition \ref{reducedprop}.
Let $\nu$ be the measure as in Proposition \ref{reducedprop}. For $\io\in \{1,2,\dots\}$, apply Lemma \ref{decomplemma} to $\nu$ and scale $R_\io$. Then there exist $N_\io\lesssim (\log R_\io)^{2T}$ measures $\nu^1_\io,\dots,\nu^{N+1}_\io$ such that:
\begin{enumerate}
    \item $\sum_{i=1}^{N_\io+1}\nu_\io^i= \nu$; 
    \item $\nu_\io^{i}([0,1)^2)\ge R_\io^{-\de^2}$ for $1\le i\le N_\io$, and $\nu_\io^{N_\io+1}([0,1)^2)\le R_\io^{-\de^3}$;
    \item For $1\le i\le N_\io$, $\nu^i_\io$ is $(w_\io^i;T)$-uniform for some $w_\io^i\in [0,1]^T$.
\end{enumerate}

Let us just fix a $\nu^i_\io$ ($i\le N_\io$) and denote its branching function by $f_\io^i$. We claim that $f_\io^i$ is $\big(\al-O(\frac{1}{T}+\de^2)\big)$-Frostman. By \eqref{verifybranching}, we have
\[ R_\io^{-\frac{2}{T}-f_\io^i(x)}\le \frac{\nu_\io^i(Q_{R_\io^{-x}})}{\nu_\io^i([0,1)^2)}\le \nu(Q_{R_\io^{-x}})R_\io^{\de^2}\lesssim R_\io^{-x\al}R_\io^{\de^2}, \]
where the last inequality is by \eqref{frostcondition}. We see that
\[ f_\io^i(x)\ge \al x-O(\frac{1}{T}+\de^2)\ge \al x-O(T^{-1}), \]
where the last equality is by \eqref{choiceofpar}.
In other words, $f_\io^i$ is $\al$-Frostman with error $O(T^{-1})$. 

We apply Lemma \ref{partitionlem} with parameters $\ga=O(T^{-1})$, $s=\al$ and $\nu=\nu_\io^i$, to obtain a decomposition
\[ \nu_\io^i=\sum_j \nu_\io^{i,(j)}, \]
where each $\nu_\io^{i,(j)}$ has the branching function $f_\io^{i,(j)}$ that is $\al$-Frostman with error $O(T^{-1})$, and 
\begin{equation}\label{f(1)}
    f_\io^{i,(j)}(1)=\al-C_1 T^{-1}.
\end{equation} 
That $f_\io^{i,(j)}$ is $\al$-Frostman with error $O(T^{-1})$ means
\begin{equation}\label{alphakt}
    f_\io^{i,(j)}(x)\ge \al x- O(T^{-1}).  
\end{equation} 

Since $\supp \nu_\io^{i,(j)}\subset \supp \nu\subset E_2$, by \eqref{packingcondition}, we have
\[ |\supp \nu_\io^{i,(j)} |_{R_\io^{-x}}\le R_\io^{\beta x}, \]
provided $R_\io^x\ge R_0^x \ge R_\circ$.
We choose $R_0$ such that $R_0\ge R_\circ^{CT}$ (recall \eqref{R0tbd}), so we have
\[ f_\io^{i,(j)}(x)\le \beta x  \]
for $x\in [\frac{1}{T},1]$. 

Let us fix a triple $(\io,i,j)$ and hence the measure $\nu_\io^{i,(j)}$. For simplicity, we write $\nu^*=\nu_\io^{i,(j)}$ and $f^*=f_\io^{i,(j)}$. When we deal with a fixed triple $(\io,i,j)$, we use this abbreviation; when we need to distinguish between them, we restore the indices $(\io,i,j)$. This will not cause any confusion. 

We apply Lemma \ref{structurelem} with parameters $u=\al,\kappa=\beta, f=f^*$. To verify the assumption for $f^*$, we just need to properly choose parameters $T^{-1}\ll \eta\ll \e\ll 1$ (see \eqref{choiceofpar}). Now we think of $\al,\beta$ as fixed constants. As a result of Lemma \ref{structurelem}, we obtain $k\le C(\eta)$ and $k$ disjoint intervals $\{[a_j,b_j]\}_{j=1}^k$ such that 
    \begin{enumerate}
        \item For $1\le j\le {k}$, $[a_j,b_j]$ is admissible and
        \begin{equation}\label{Frostmaninab}
            f^*(x)\ge f^*(a_j)+(1+\e)(x-a_j)-\eta(b_j-a_j)\ \ \ \ x\in[a_j,b_j]. 
        \end{equation} 
        \item $|[0,b_k]\setminus \bigcup_{j=1}^k [a_j,b_j]|\le \eta$.
        \item $f^*(b_{k})=(2\beta-1)b_{k}-\beta+1+O(\e)$.
    \end{enumerate}

\section{Multiscale good-bad decomposition}\label{sec6}

We follow the multiscale good-bad decomposition from \cite[Section 3.2]{liu2026lebesgue}.
Let \begin{equation}\label{defrhor}
\rho_j:=R_\io^{-a_j}, r_j=R_\io^{-b_j}
\end{equation} 
for $1\le j\le k$.
Then we have
\[ 1\ge \rho_1>r_1\ge \rho_2>r_2\dots\ge r_k>R_\io^{-1}. \]
Let us reinterpret the properties for $\{a_j,b_j\}$ in terms of $\{\rho_j,r_j\}$.
Since each $[a_j,b_j]$ is admissible, we obtain the key property 
\begin{equation}\label{increspeed}
    R_\io\rho_j\le (R_\io r_{j})^2.
\end{equation} 
By \eqref{f(1)}, \eqref{alphakt} and \eqref{verifybranching}, for $R_\io^{-1}\le r\le 1$ and each $Q_r\in \cD_r$ with $Q_r\cap \supp\nu^*\neq\emptyset$, we have the following $\al$-Katz-Tao condition:
\begin{equation}\label{alkt}
    |\supp\nu^*\cap Q_r|_{R_\io^{-1}}\sim \frac{\nu^*(Q_r)}{\nu^*(Q_{R_\io^{-1}})}\lesssim R_\io^{O(T^{-1})} R_\io^{f^*(1)-f^*(-\log_{R_\io} r)}\lesssim R_\io^{O(T^{-1})}(R_\io r)^{\al}.
\end{equation}

From \eqref{Frostmaninab}, we have the following $(1+\e)$-Frostman condition for $1\le j\le k$:
\begin{equation}\label{nustarfrost}
    \nu^*(Q_r)\lesssim (\frac{\rho_j}{r_j})^\eta (\frac{r}{\rho_j})^{1+\e} \nu^*(Q_{\rho_j}),
\end{equation}  
for all $r\in [r_j,\rho_j]$ and $Q_r\subset Q_{\rho_j}$.

The other two properties are
\begin{equation}\label{tinyrrho}
    \frac{1}{\rho_1}\frac{r_1}{\rho_2}\frac{r_2}{\rho_3}\dots\frac{r_{k-1}}{\rho_k}\le R_\io^\eta,
\end{equation}
and
\begin{equation}\label{rkcoveringnumber}
    |\supp \nu^*|_{r_k}\lesssim r_k^{1-2\beta} R_\io^{-\beta+1+O(\e)}.  
\end{equation} 

\bigskip

Now we are ready to perform the multiscale good-bad decomposition.
For each $1\le j\le k$, we use $\{\theta_j\}$ to denote a collection of $(r_j/\rho_{j})$-caps that form a partition of $S^1$.

For each cube $Q$ with $\nu^*(Q)>0$, define the normalized restriction
\[
\nu_Q^* := \frac{1}{\nu^*(Q)}\,\nu^*|_Q.
\]
This is the conditional measure associated with the already fixed measure $\nu^*$.

\begin{definition}[Bad caps]
Let $j\in \{1,\dots,k\}$. For each $(r_j/\rho_j)$-cap $\theta_j$, let $\T'_{j,\theta_j}$ be a family of grid $r_j\times\rho_j$-tubes covering $\R^2$ in direction $\theta_j$. For each dyadic $r_j$-cube $Q_j$, we say $\theta_j$ \textbf{is bad for} $Q_j$ if there exists $T\in\T'_{j,\theta_j}$ with $10T\cap Q_j\neq\emptyset$ such that
\begin{equation}\label{def-bad-Q-r-j}
\nu^*_{\wt Q_{j}}(T)>R_\io^{T^{-1/3}}(\rho_j/r_j)^{10\eta}\cdot (r_j/\rho_j),
\end{equation}
where $\wt Q_{j}\in\cD_{\rho_{j}}$ is the dyadic $\rho_{j}$-cube containing $Q_j$. For each $Q_j\in\cD_{r_j}$, denote
\begin{equation}
	\label{def-Theta-j-Q-j}
	\Theta_j(Q_j):=\{\theta_j: (r_j/\rho_j)\text{-caps, bad for } Q_j\}.
\end{equation} 
\end{definition}

\begin{definition}[Bad caps at all scales]
    Fix $R_\io^{-1}\le r\le 1$ and $Q_r\in\cD_r$. For each $1\le j\le k$ such that $r\le r_j$, we define
    \begin{equation}
	\label{def-Theta-j-Q-j'}\Theta_j(Q_{r}):=\Theta_j(Q_{j}),
\end{equation}
where $Q_j\in \cD_{r_j}$ is the $r_j$-cube containing $Q_r$. Define
\begin{equation}
    \Theta(Q_r):=\bigcup_{j:r_j\ge r}\Theta_j(Q_r),
\end{equation}
which is the set of bad caps for $Q_r$ at all scales.
\end{definition}

For a tube $T$, we use $\theta(T)\in S^1$ to denote the direction of $T$. 

\begin{definition}[Bad/good tubes]\label{badtubedef}
    Fix $R_\io^{-1}\le r\le 1$ and $Q_r\in\cD_r$. For each $1\le j\le k$ such that $r\le r_j$, we define
    \begin{equation}
	\label{def-T-i-Q-j}\T_{\io,Q_r,j}:=\{T\in\T_\iota: 2T\cap Q_r\neq\emptyset \text{ and } \exists\, \theta_j\in\Theta_j(Q_r) \text{ with } \theta(T)\in\theta_j \}.
\end{equation}
Define
\begin{equation}\label{def-T-i-Q}
    \T_{\iota, Q_r}:=\bigcup_{j: r_j\ge r}\T_{\iota, Q_r,j},
\end{equation}
which is the collection of tubes in $\T_\io$ whose directions lie in some cap in $\Theta(Q_r)$. These are bad tubes.
Also define the good tubes
\begin{equation}\label{defgoodtube}
    \T_{\io,Q_r,good}:=\T_{\io}\setminus \T_{\io,Q_r}.
\end{equation}
\end{definition}

Now we are ready to decompose $\mu$ into good and bad parts. Recall the definitions of the operators $M_0, M_T$ from \eqref{MT}. By \cite[Lemma 3.4]{guth2020falconer},
we have the following decomposition
\[ \mu=M_0\mu+\sum_{\io\ge 1}\sum_{T\in \T_\io}M_T \mu +\mu_\err,\] where $\|\mu_\err\|_{L^1}\lesssim \RD(R_0)\|\mu\|_{L^1}.$

We also denote $\mu_\io=\sum_{T\in\T_\io} M_T\mu$. Then we have
\begin{equation}\label{decomposemu} \mu=M_0\mu+\sum_{\io\ge 1} \mu_\io+\mu_\err. 
\end{equation}

We fix the $\io$ as we did at the end of the last section. 
For each dyadic $r$-cube $Q\in\cD_{r}$, define
\begin{equation}\label{def-good-i-Q}\mu_{good, \io, Q}:= \sum_{T\in \T_{\iota, Q,good}} M_T \mu.\end{equation}
For each $y\in \supp \nu^*$, define
\begin{equation}\label{def-mu-good-y}\mu_{good,\io, y}:=\mu_{good, \iota, Q(y)},\end{equation}
where $Q(y)$ denotes the dyadic $R_\io^{-1}$-cube containing $y$.

\medskip
For any $x\in \R^2$, define $d^x(y):=|x-y|$. For any measure $\mu$, define $d^x_*(\mu)$ as the pushforward measure.
For each $y\in\supp\nu^*$,
$$\begin{aligned}\|d_*^y(\mu_{good,\io, y})-d_*^y(\mu_\io)\|_{L^1(\R)}&=\|\sum_{T\in\T_{\iota, Q(y)}}d_*^y(M_T\mu)\|_{L^1(\R)}\\&\leq \sum_{T\in\T_{\iota, Q(y)}}\|d_*^y(M_T\mu)\|_{L^1(\R)}\\
(\textup{by \cite[Corollary 3.3]{guth2020falconer}})&  \lesssim \sum_{T\in \T_{\io,Q(y)}}\mu(2T)+\RD(R_\io)
   \end{aligned}$$
Since $\supp \mu$ and $\supp \nu$ are $\gtrsim 1$ apart, we see that the tubes in $\T_{\io,Q(y)}$, which are centered at $Q(y)$, are $O(1)$-overlapping on $\supp\mu$. Letting
$$Bad_{\iota}(y) := \bigcup_{T \in \T_{\iota, Q(y)}} 2T, $$ 
we have the inequality
 \begin{equation}
     \|d_*^y(\mu_{good,\io, y})-d_*^y(\mu_\io)\|_{L^1(\R)}\lesssim \mu ( Bad_{\iota}(y)) + \RD(R_\io).
 \end{equation}

Then, with
\begin{equation}\label{def-Bad-i}\begin{aligned}Bad_\iota:=&\{(x,y)\in \supp\mu\times \supp\nu:x\in Bad_\iota(y)\}\\=&\{(x,y)\in \supp\mu\times \supp\nu: \exists \,T\in  \T_{\iota, Q(y)},\, x\in 2T\},\end{aligned}\end{equation}
it follows that
   \begin{equation}\label{badpartest}
   \begin{split}\int \|d_*^y(\mu_{good,\io,y})-d_*^y(\mu_\io)\|_{L^1(\R)}d\nu^*(y)\lesssim &  \int \mu (Bad_\iota(y))d\nu^*(y)+ \RD(R_\io)\\= & \mu\times\nu^* (Bad_\iota)+ \RD(R_\io).
   \end{split}
   \end{equation}
Recall that $\nu^*=\nu_\io^{i,(j)}$ by our convention at the end of the last section.
   
Then we need the following geometric observation on pairs $(x,y)\in Bad_\iota$: by our definition of ``bad" in \eqref{def-bad-Q-r-j}-\eqref{def-T-i-Q} and \eqref{def-Bad-i}, there exist $1\leq j\leq k$ and $T\in\T_\iota$ such that $x,y\in 10T$ and $\theta(T)$ lies in a $(r_j/\rho_j)$-cap $\theta_j$ that is bad for $Q_j(y)\in\cD_{r_j}$ ($Q_j(y)$ is the $r_j$-cube containing $y$). Therefore, if we use
$$T_{r_j\times \rho_j}(y;x)$$
to denote the $r_j\times \rho_j$-tube centered at $y$ parallel to the line $l_{x,y}$ connecting $x,y$, then 
$$100T_{r_j\times \rho_j}(y;x)\supset T_{r_j\times \rho_j},$$
where $T_{r_j\times \rho_j}$ is a tube in direction $\theta_j$ from \eqref{def-bad-Q-r-j}, with $10T_{r_j\times \rho_j}\cap Q_j(y)\neq\emptyset$. Consequently,
$$\nu^*_{\wt Q_{j}}(100T_{r_j\times \rho_j}(y;x))>R_\io^{T^{-1/3}}(\rho_j/r_j)^{10\eta}\cdot (r_j/\rho_j),$$
and therefore the set $Bad_\iota$ is a subset of
\begin{equation}\label{badiota}
    \{(x,y)\in E_1\times E_2: \exists \,j,\,\nu^*_{\wt Q_{j}(y)}(100T_{r_j\times \rho_j}(y;x))>R_\io^{T^{-1/3}}(\rho_j/r_j)^{10\eta}\cdot (r_j/\rho_j)\}.
\end{equation}
Here, recall $E_1,E_2$ are as in Proposition \ref{reducedprop}.

We prove the following estimate for the bad part:
\begin{lemma}\label{badestimatelem}
Let $Bad_\io$ be defined as above. Then
    \begin{equation}\label{measure-Bad-i}\mu\times\nu^*(Bad_\iota)\lesssim  R_\io^{-\delta}\nu^*([0,1)^2).\end{equation}
\end{lemma}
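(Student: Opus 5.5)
We reduce the estimate to one scale index $j$ at a time. By \eqref{badiota}, $Bad_\io\subset\bigcup_{j=1}^k Bad_\io^j$, where $Bad_\io^j$ is the set of pairs $(x,y)\in E_1\times E_2$ with
\[ \nu^*_{\wt Q_j(y)}\big(100T_{r_j\times\rho_j}(y;x)\big)>R_\io^{T^{-1/3}}(\rho_j/r_j)^{10\eta}(r_j/\rho_j). \]
Since $k\le C(\eta)$, it suffices to prove $\mu\times\nu^*(Bad_\io^j)\lesssim R_\io^{-2\de}\nu^*([0,1)^2)$ for each fixed $j$. We decompose $\nu^*=\sum_{\wt Q\in\cD_{\rho_j}}\nu^*(\wt Q)\,\nu^*_{\wt Q}$. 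The condition defining $Bad_\io^j$ involves only the normalized measure $\nu^*_{\wt Q}$ on the cube $\wt Q$ containing $y$. So we reduce to showing, for each $\rho_j$-cube $\wt Q$,
\[ \mu\times\nu^*_{\wt Q}\big(\{(x,y): y\in\wt Q,\ \nu^*_{\wt Q}(100T_{r_j\times\rho_j}(y;x))>\lambda\}\big)\lesssim R_\io^{-2\de}, \qquad \lambda:=R_\io^{T^{-1/3}}(\rho_j/r_j)^{10\eta}(r_j/\rho_j). \]
Summing over $\wt Q$ with weights $\nu^*(\wt Q)$ then gives the claim.

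Fix $\wt Q$ and rescale it by $\rho_j^{-1}$ to the unit square. Let $\sigma$ be the pushforward of $\nu^*_{\wt Q}$, a probability measure. By \eqref{nustarfrost}, $\sigma$ satisfies the Frostman condition $\sigma(B(z,t))\lesssim (\rho_j/r_j)^{\eta}t^{1+\e}$ for $t\in[r_j/\rho_j,1]$. The sets $\{x: (x,y)\text{ bad}\}$ are unions of directions from $y$. Because $\dist(E_1,E_2)\gtrsim1$, the measure $\mu$ of the points $x$ whose direction from $y$ lies in a set $\Theta$ of directions is controlled by the Frostman bound $\mu(B(x,r))\lesssim r^\al$ with $\al>1$. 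Concretely, $\mu$ of a $1\times(r_j/\rho_j)$-tube at distance $\gtrsim1$ is $\lesssim (r_j/\rho_j)^{\al}$, and such tubes through $y$ are boundedly overlapping away from $y$. The quantity to bound is therefore $\int\mu(\text{bad directions at }y)\,d\sigma(y)$. This is exactly the setting of Orponen's radial projection estimate in the form used in \cite[Section 3]{guth2020falconer} and \cite{liu2026lebesgue}: for a $(1+\e)$-Frostman measure $\sigma$ and an $\al$-dimensional measure $\mu$ separated from it, the set of pairs $(x,y)$ with $\sigma$-heavy $r\times1$ tubes through $y$ in direction $x-y$ has small $\mu\times\sigma$ measure. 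More precisely, the tube mass exceeds $K\cdot r$ only on a set of measure $\lesssim K^{-c}$ up to $(\rho_j/r_j)^{O(\eta)}$ losses.

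To set this up at scale $r=r_j/\rho_j$, I would first rescale $\mu$ to a neighbourhood of $\wt Q$. The only directions that matter are those from points of $\wt Q$ to $\supp\mu$, so $\mu$ may be replaced by its radial projection to a circle of radius $\sim1$ around $\wt Q$. This yields a measure on directions with $\al$-dimensional, in particular $(1+\e)$-dimensional, behaviour. The radial projection / incidence bound at scale $r$ then gives
\[ \mu\times\sigma(\text{bad})\lesssim (\rho_j/r_j)^{C\eta}\big(R_\io^{T^{-1/3}}(\rho_j/r_j)^{10\eta}\big)^{-c}. \]
The factor $R_\io^{T^{-1/3}}$ dominates $R_\io^{O(\eta)}$-type losses only if the $(\rho_j/r_j)^{\eta}$ losses are beaten by the $(\rho_j/r_j)^{10\eta}$ gain, which is why the threshold was chosen with $10\eta$. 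The gain $R_\io^{-cT^{-1/3}}\le R_\io^{-2\de}$ then holds by \eqref{choiceofpar}.

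The main obstacle is making the radial projection step rigorous with correct exponents. The Frostman bound \eqref{nustarfrost} only holds on the scale window $[r_j,\rho_j]$ and carries an $\eta$-loss. The theorem must also be applied with a thickened-tube, discretized version, such as the $\de$-discretized Orponen estimate or the Guth--Iosevich--Ou--Wang version \cite[Lemma 3.6]{guth2020falconer}. I would first try to invoke that discretized lemma as a black box after rescaling. I would then check that the exponent $c=c(\al,\e)>0$ is independent of $\eta,T$, so that $cT^{-1/3}\gg\de$ and $c\cdot10\eta\ge C\eta$ can both be arranged.
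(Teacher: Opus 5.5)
\textbf{There is a genuine gap.} Your reduction to a fixed $j$ and the disintegration $\nu^*=\sum_{\wt Q}\nu^*(\wt Q)\,\nu^*_{\wt Q}$ are fine, and the quantity you identify, $\int \mu(\{x:(x,y)\ \text{bad}\})\,d\nu^*(y)$, is the right one. The problem is the step that bounds it separately in each $\rho_j$-cube after rescaling.

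\emph{Two of your inputs are false.} First, the Frostman bound $\mu(B(x,r))\lesssim r^\al$ gives only $\mu(T)\lesssim (r_j/\rho_j)^{\al-1}$ for an $(r_j/\rho_j)\times 1$ tube $T$ (cover it by $\rho_j/r_j$ balls), not $(r_j/\rho_j)^{\al}$. Since $(r_j/\rho_j)^{\al-1}$ is much larger than the average mass $r_j/\rho_j$, a pointwise bound fails; this is exactly why a radial projection theorem is needed. Second, the radial projection of $\mu$ onto a circle is a measure on a one-dimensional set. For a fixed centre it is in general only $(\al-1)$-Frostman, not ``$\al$-dimensional'' or $(1+\e)$-dimensional, so the hypotheses of an Orponen-type theorem are not available in your rescaled picture.

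\emph{More fundamentally, you lose the averaging in $y$.} Orponen's estimate \eqref{orponenprojection} is an averaged statement: $\pi^y_*\mu\in L^p$ on average over centres $y$ drawn from a measure that is $\al$-Frostman at all scales with bounded constant. The normalized $\nu^*_{\wt Q}$ is not of this type: its Frostman constant is of order $\nu^*([0,1)^2)/\nu^*(\wt Q)$, which is huge. Moreover, when $\rho_j\lesssim r_j/\rho_j$ (which admissibility allows), all points of $\wt Q$ see $\supp\mu$ through essentially the same $(r_j/\rho_j)$-arcs of directions, so inside one cube there is no averaging in $y$ at all. The only information on $\mu$ left is its Frostman bound, which does not stop $\mu$ from putting a large share of its mass on the cone over the bad directions of that particular cube. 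So the per-cube bound $\lesssim R_\io^{-2\de}$ you reduce to cannot be obtained from your tools, and there is no reason for it to hold cube by cube. Your worry about needing $c\cdot 10\eta\ge C\eta$ for the Orponen exponent $c$ is a symptom of the same misarrangement.

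\textbf{The missing idea} is that the $\mu$-side must be handled globally, and only then localized to cubes. You apply Orponen's theorem to $\mu$ and the whole normalized $\nu^*$, which is $\al$-Frostman at all scales up to $R_\io^{O(T^{-1})}$ by \eqref{f(1)}, \eqref{alphakt}, \eqref{verifybranching}. The paper does this in two steps:
\begin{enumerate}
\item It removes the set $Bad_{j,\mu}$ of pairs whose $(r_j/\rho_j)\times 1$ tube through $x,y$ has $\mu$-mass $>R_\io^{T^{-1/2}}r_j/\rho_j$. For each $y$ these $x$ lie in $\lesssim R_\io^{-T^{-1/2}}\rho_j/r_j$ tubes through $y$, so H\"older plus Orponen gives $\mu\times\nu^*(Bad_{j,\mu})\lesssim R_\io^{-cT^{-1/2}}\nu^*([0,1)^2)$.
\item Inside each $\rho_j$-cube, it bounds the pairs with a $\nu^*_{\wt Q}$-heavy $r_j\times\rho_j$ tube by Cauchy--Schwarz and the $L^2$ incidence count from \eqref{nustarfrost}. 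This uses that every relevant long tube is now $\mu$-light.
\end{enumerate}

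Your outline can also be repaired more directly. For each $y$, a bush count with \eqref{nustarfrost} shows that the set $D_y$ of bad directions at $y$ has arc length $\lesssim \e^{-1}R_\io^{-T^{-1/3}}(\rho_j/r_j)^{-9\eta}$. Then $\int \pi^y_*\mu(D_y)\,d\nu^*(y)\le\int\|\pi^y_*\mu\|_{L^p}|D_y|^{1/p'}\,d\nu^*(y)$, together with Orponen for $\mu$ and the normalized $\nu^*$, finishes the proof.

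In both versions the $\eta$-losses from \eqref{nustarfrost} are beaten linearly by the $10\eta$ in the threshold before any Orponen exponent enters. Crucially, the integration in $y$ is over all of $\nu^*$, not over a single cube.
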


To prove Lemma \ref{badestimatelem}, we just need to prove the following result.
\begin{proposition}\label{prop-good-bad}
	Let $\mu,\nu^*$ be the measures as above.  Recall that $\rho_j=R_\io^{-a_j},r_j=R_\io^{-b_j}$ from the beginning of this section. Then there exists a subset $Bad\subset [0,1)^4$ such that
	\begin{itemize}
		\item $\mu\times\nu^*(Bad)\lesssim  R_\io^{-\delta}\nu^*([0,1)^2)$,
		\item for all pairs of points $(x,y)\notin Bad$ and all $j\in \{1,\dots,k\}$, the $r_j\times \rho_j$ tube $T_{r_j\times \rho_j}(y;x)$ centered at $y$ parallel to the line $l_{x,y}$ connecting $x,y$ satisfies
		\begin{equation}\label{neglectRiota}
		    \nu^*_{\wt Q_j(y)}(100T_{r_j\times \rho_j}(y;x))< R_\io^{T^{-1/3}}(\rho_j/r_j)^{10\eta}\cdot (r_j/\rho_j),
		\end{equation}
		where the notation is the same as in \eqref{badiota}.
	\end{itemize}
	
\end{proposition}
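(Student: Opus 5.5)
The natural plan is to define $Bad$ as a union over scales, $Bad=\bigcup_{j=1}^k Bad_j$. Here $Bad_j$ is the set of pairs $(x,y)$ for which \eqref{neglectRiota} fails at scale $j$. Since $k\le C(\eta)$ is a constant independent of $R_\io$, it suffices to prove, for each fixed $j$,
\[
\mu\times\nu^*(Bad_j)\lesssim R_\io^{-2\de}\,\nu^*([0,1)^2).
\]
We then decompose $\nu^*=\sum_{\wt Q\in\cD_{\rho_j}}\nu^*(\wt Q)\,\nu^*_{\wt Q}$, so the estimate reduces to a single $\rho_j$-cube $\wt Q$ with a probability measure $\nu^*_{\wt Q}$. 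The target becomes
\[
\mu\times\nu^*_{\wt Q}\bigl(\{(x,y):y\in\wt Q,\ \nu^*_{\wt Q}(100T_{r_j\times\rho_j}(y;x))>R_\io^{T^{-1/3}}(\rho_j/r_j)^{10\eta}(r_j/\rho_j)\}\bigr)\lesssim R_\io^{-2\de}.
\]
Summing over $\wt Q$ with weights $\nu^*(\wt Q)$ then gives the claim.

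Next, rescale $\wt Q$ to the unit square by the factor $\rho_j^{-1}$. This turns the $r_j\times\rho_j$ tubes into $\sigma\times 1$ tubes with $\sigma=r_j/\rho_j$. By \eqref{nustarfrost}, the rescaled measure $\lambda:=\nu^*_{\wt Q}$ is $(1+\e)$-Frostman at scales in $[\sigma,1]$ up to a loss $\sigma^{-\eta}$. The measure $\mu$ lives at distance $\gtrsim1$ from $\wt Q$. Only the direction of $l_{x,y}$ matters, and for $y$ fixed that direction is essentially the radial projection $\pi_y(x)$. The push-forward $(\pi_y)_*\mu$ onto $S^1$ is a measure that is nonconcentrated at scale $\sigma$ for most $y$.

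The main input is Orponen's radial projection theorem \cite{orponen2018dimension} in the quantitative $L^p$ form used in \cite[Section 3]{guth2020falconer} and \cite[Section 3.2]{liu2026lebesgue}. Applied with $\mu$ ($\al>1$ dimensional) and $\lambda$ ($(1+\e)$-Frostman), it gives
\[
\int\!\!\int \mathbf 1\{\lambda(T_\sigma(y;x))>\sigma^{1-10\eta}R_\io^{T^{-1/3}}\}\,d\mu(x)\,d\lambda(y)\lesssim \sigma^{c\e}R_\io^{-cT^{-1/3}}.
\]
The mechanism is as follows. By Chebyshev, the left side is bounded by the $p$-th moment of $y\mapsto\lambda(T_\sigma(y;\cdot))/\sigma$ against $\mu\times\lambda$, divided by $(\sigma^{-10\eta}R_\io^{T^{-1/3}})^{p}$. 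Orponen's estimate bounds that moment by $\sigma^{-O(\eta)}$, because both measures have dimension exceeding $1$ and lie on separated sets. The factor $R_\io^{T^{-1/3}}$ then beats every loss $\sigma^{-O(\eta)}R_\io^{O(T^{-1})}$. If $\sigma$ is close to $1$, the threshold exceeds $1$ and the bad set is empty. Otherwise the gain $R_\io^{-cT^{-1/3}}\le R_\io^{-2\de}$ holds because $\de\ll T^{-1}$ by \eqref{choiceofpar}.

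The main obstacle is this radial projection step. It requires that $\lambda$ be Frostman only on the window $[\sigma,1]$, which is exactly what \eqref{nustarfrost} provides. It also requires that Orponen's bound be uniform, with constants depending only on $\al,\e$, and with the $\sigma^{-\eta}$ loss absorbed by the $(\rho_j/r_j)^{10\eta}$ in the threshold. I would first try to quote the version in \cite{liu2026lebesgue}, which is stated for exactly this multiscale setup. Failing that, I would redo the Chebyshev $+$ $L^p$ radial projection argument at scale $\sigma$ after rescaling, keeping track of $\mu$ only through its $\al$-dimensional energy at scales $\ge\sigma\rho_j$.
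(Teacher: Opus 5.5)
\textbf{There is a genuine gap: the reduction to a single $\rho_j$-cube is false.} You claim that, uniformly in $\wt Q$, the $\mu\times\nu^*_{\wt Q}$-measure of pairs with $y\in\wt Q$ violating \eqref{neglectRiota} is $\lesssim R_\io^{-2\de}$. This is false in general, so no projection theorem can give it.

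Here is an example. Rescale $\wt Q$ to $[0,1]^2$, write $\sigma=r_j/\rho_j$ and $K=R_\io^{T^{-1/3}}\sigma^{-10\eta}$. Let $\nu^*_{\wt Q}$ be uniform on a $\sigma$-thickened $L\times L$ lattice with $L^2=\sigma^{-1-\e}$. This measure is $(1+\e)$-Frostman on $[\sigma,1]$, so \eqref{nustarfrost} holds. Through each lattice point in the middle of the square, the tube in each rational direction $(p,q)$ with $\max(|p|,|q|)\le Q:=c\,\sigma^{-(1-\e)/2}K^{-1}$ ($c$ small) has mass $\gtrsim (LQ)^{-1}>K\sigma$. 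Take $\rho_j\le\sigma/10$, which is compatible with admissibility (e.g.\ $a_j=0.7$, $b_j=0.8$). Spread $\mu$ uniformly over the angular $\sigma/10$-neighbourhoods, seen from $\wt Q$, of these $\sim Q^2$ directions. Then $\mu$ is still $\al$-Frostman, since $Q^2\gtrsim\sigma^{1-\al}$ for fixed $\al<2$ and $T$ large. Yet a fixed positive proportion of $\mu\times\nu^*_{\wt Q}$ consists of bad pairs.

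So the gain must come from averaging the pins $y$ over the global $\nu^*$, which is $\al$-Frostman up to $R_\io^{O(T^{-1})}$. By contrast, $\nu^*_{\wt Q}$ is $\al$-Frostman only with constant $\sim\nu^*([0,1)^2)/\nu^*(\wt Q)$, and after rescaling it is merely $(1+\e)$-dimensional on a window. Orponen's theorem needs exponent $\al>1$ at all scales, with $p=p(\al)$, so it cannot be applied to $(\mu,\nu^*_{\wt Q})$ with uniform constants.

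\textbf{The moment mechanism gives Orponen's theorem the wrong job.} The quantity $\lambda(T_\sigma(y;x))$ measures how $\lambda$ concentrates on tubes through its own points. Orponen's theorem controls $\pi^y_*\mu$ for the separated pair and says nothing about this. Set $g_y(e)=\lambda(T_\sigma(y;e))/\sigma$. Bounding $\int g_y\,d\pi^y_*\mu$ by H\"older would need $\|g_y\|_{L^{p'}}\lesssim\sigma^{-O(\eta)}$ for the large exponent $p'$ dual to Orponen's $p$, and this fails in the example.

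\textbf{How to repair it: each measure must do its own job.} Fix $y$ with $\wt Q=\wt Q_j(y)$, and let $D_y\subset S^1$ be the set of directions of $l_{x,y}$ for which \eqref{neglectRiota} fails. Since $|\{e: y'\in 100T_{r_j\times\rho_j}(y;e)\}|\lesssim\min\{1,r_j/|y-y'|\}$, Chebyshev on $S^1$ together with \eqref{nustarfrost} gives $|D_y|\lesssim\e^{-1}K^{-1}\sigma^{-\eta}=\e^{-1}R_\io^{-T^{-1/3}}\sigma^{9\eta}$, uniformly in $y$. No projection theorem is needed for this step. Then $\mu\times\nu^*(Bad_j)=\int\pi^y_*\mu(D_y)\,d\nu^*(y)\le\sup_y|D_y|^{1/p'}\int\|\pi^y_*\mu\|_{L^p}\,d\nu^*(y)$. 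Orponen's theorem, applied to $\mu$ and $\om=R_\io^{-O(T^{-1})}\nu^*([0,1)^2)^{-1}\nu^*$, bounds the last integral by $R_\io^{O(T^{-1})}\nu^*([0,1)^2)$.

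The paper makes the same separation in a different form. In Step 1 it uses Orponen, with pins from the normalized global $\nu^*$, to discard pairs where $\mu$ is heavy on the $r_j/\rho_j\times 1$ tube through $y$. In Step 2 it handles the remaining pairs cube by cube. There it combines the resulting $L^\infty$ bound on those tubes with a Cauchy--Schwarz count of heavy $r_j\times\rho_j$ tubes based on \eqref{nustarfrost}.
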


From the property of $Bad$, we see that $(x,y)\notin Bad \Rightarrow (x,y)\notin Bad_\io$. Hence, $Bad_\io\subset Bad$, and this gives the proof of \eqref{measure-Bad-i}.

Before proving the proposition, we recall the radial projection theorem of Orponen \cite{orponen2018dimension}. We use the formulation as in \cite[Theorem 3.7]{guth2020falconer}. Denote $\pi^y(x):=\frac{y-x}{|y-x|}$ as the radial projection.
\begin{lemma}[Orponen, \cite{orponen2018dimension}]
     For every $\al>1$, there exists $p=p(\al)>1$ so that the following holds. Suppose $\mu$ and $\om$ are measures on the unit disk with disjoint supports and that for every ball $B(x,r)$, $\mu(B(x,r))\lesssim r^\al$ and $\om(B(x,r))\lesssim r^\al$. Then
     \begin{equation}\label{orponenprojection}
         \int \|\pi^y_* \mu\|_{L^p}^p d\om(y)\lesssim 1.
     \end{equation}
\end{lemma}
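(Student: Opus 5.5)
This lemma is quoted from Orponen \cite{orponen2018dimension} in the form of \cite[Theorem 3.7]{guth2020falconer}, so in the paper it is used as a black box. If I had to reprove it, I would discretize and control high densities of $\pi^y_*\mu$ by an $L^2$ X-ray estimate. The exponents below are not fully checked, and the key bookkeeping is the main obstacle identified in the last paragraph.

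\emph{Step 1: discretization.} Fix a dyadic $\de\in(0,1)$. For $y\in\supp\om$ and a $\de$-arc $I\subset S^1$, the quantity $\pi^y_*\mu(I)$ is comparable to $\mu(T)$, where $T$ is the $\de\times 2$ tube through $y$ in direction $I$. This uses that the supports are disjoint, hence $\gtrsim 1$ apart after localizing. Writing $\pi^y_*\mu$ at scale $\de$ as a step function, $\|\pi^y_*\mu\|_{L^p}^p$ is controlled by
\[ \sum_{\de \textup{ dyadic}}\ \sum_{\lambda\textup{ dyadic}}\lambda^{p-1}\,\mathcal{H}^1\{\theta:\ \mu(T_\de(y,\theta))\sim\lambda\de\}. \]
Integrating in $\om$ and using Fubini, it suffices to bound the ``heavy pair'' measure
\[ \mu\times\om\{(x,y): x,y\in T \textup{ for some }\de\textup{-tube } T \textup{ with } \mu(T)\ge \lambda\de\}\lesssim \lambda^{-1-c}\de^{c} \]
for some $c=c(\al)>0$. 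Summing the resulting geometric series then gives \eqref{orponenprojection} for $p=1+c/2$.

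\emph{Step 2: counting heavy tubes with an $L^2$ X-ray bound.} Let $g_e(t)$ be the density of the projection of $\mu$ onto the line $e^\perp$. By Plancherel in polar coordinates,
\[ \int_{S^1}\!\!\int_\R |g_e(t)|^2\,dt\,de\ \sim\ \int|\wh\mu(\xi)|^2|\xi|^{-1}d\xi\ \sim\ I_1(\mu)<\infty, \]
which is finite because $\al>1$. Discretizing this identity gives $\sum_{T}\mu(T)^2\lesssim 1$ over a $\de$-tube family with $\de$-separated directions. Chebyshev then bounds the number of tubes with $\mu(T)\ge\lambda\de$ by $\lesssim\lambda^{-2}\de^{-2}$. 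The Frostman condition for $\om$ gives $\om(T)\lesssim\de^{\al-1}$ for every $\de$-tube, by covering $T$ with $\de^{-1}$ balls. Hence the heavy pair measure is at most
\[ \sum_{\mu(T)\ge\lambda\de}\mu(T)\om(T)\ \lesssim\ \de^{\al-1}\sum_{\mu(T)\ge\lambda\de}\mu(T)\ \lesssim\ \de^{\al-1}(\lambda\de)^{-1}\sum_T\mu(T)^2\ \lesssim\ \lambda^{-1}\de^{\al-2}. \]

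\emph{Step 3: the main obstacle.} The bound from Step 2 is only weak-type with a loss $\de^{\al-2}$, so it does not sum over $\de$ when $\al<2$. I expect the real work to be recovering a positive power of $\de$ by using the $\al$-dimensionality of both measures symmetrically. My first attempt would run the X-ray bound at the $\al$-adapted Sobolev level. Using $I_{\al'}(\mu)<\infty$ for $1<\al'<\al$ gives
\[ \sum_T\mu(T)^2\lesssim\de^{\al'-1}, \]
which recovers a factor $\de^{\al-1}$ in Step 2. I would combine this with the trivial upper bound $\mu(T)\lesssim\de^{\al-1}$, which restricts the admissible $\lambda$ to $\lambda\lesssim\de^{\al-2}$. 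I would then interpolate between the estimate of Step 2, the full Frostman bound on $\mu(T)$, and the trivial bound $\mu\times\om(\cdot)\le1$, choosing $p-1$ small in terms of $\al-1$.

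If this interpolation still does not close, my fallback is Orponen's original route. That route applies the same Fourier-energy estimate with the roles of $\mu$ and $\om$ interchanged, uses a bootstrapping or induction-on-scales step on the set of directions through $y$ that carry too much mass, and exploits that a Frostman measure with $\al>1$ cannot concentrate on few lines.
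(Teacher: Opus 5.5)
\textbf{There is a genuine gap.} The paper does not prove this lemma; it imports it from Orponen in the form of \cite[Theorem 3.7]{guth2020falconer}. Your proposal does not prove it either, and the problem goes beyond unchecked exponents.

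(i) The reduction in Step 1 is set up wrongly. Summing level sets over \emph{all} dyadic $\de$ overcounts: for a bounded density, every scale contributes $\sim 1$. Also, the target $\lesssim\lambda^{-1-c}\de^{c}$ is false for $\lambda\lesssim 1$. For every $\de$, the tubes through $y$ with $\mu(T)\le c\de$ carry only $O(c)$ of the mass of $\mu$. What you need is a bound $\lesssim\lambda^{-c}$ at a single scale, uniform in $\de$, followed by lower semicontinuity of the $L^p$ norm as $\de\to0$.

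(ii) Step 2 gives nothing. Heavy tubes exist only if $\lambda\de\le\mu(T)\lesssim\de^{\al-1}$, i.e.\ $\lambda\lesssim\de^{\al-2}$. On that whole range, $\lambda^{-1}\de^{\al-2}\gtrsim1$.

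(iii) The improvement proposed in Step 3 is false. For each direction $e$, Cauchy--Schwarz over the $\lesssim\de^{-1}$ parallel tubes covering $B(0,1)$ gives $\sum_{T\parallel e}\mu(T)^2\gtrsim\de$. Summing over directions, $\sum_T\mu(T)^2\gtrsim1$ for every probability measure. Finiteness of $I_{\al'}(\mu)$ controls fine-scale regularity of the projections, not this sum.

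So your interpolation combines bounds that are all trivial exactly where it matters, and the fallback describes a strategy rather than giving an argument.

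\textbf{The missing idea has two parts.} First, do not bound $\om(T)$ by its worst case $\de^{\al-1}$. Treat $\om$ with its own X-ray bound, $\sum_T\om(T)^2\lesssim I_1(\om)\lesssim1$, via Cauchy--Schwarz.

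Second, upgrade the X-ray estimate for $\mu$ from $L^2$ to $L^q$, $q>2$, on the two-dimensional space of lines. Let $X\mu(\theta,t)$ be the density at $t$ of the projection of $\mu$ onto $\theta^\perp$. Since $I_{\al'}(\mu)<\infty$ for $1<\al'<\al$, we have $\mu\in H^{(\al'-2)/2}$. The X-ray transform of a compactly supported distribution gains half a derivative \emph{jointly} in $(\theta,t)$. In $t$ this follows from projection--slice; in $\theta$ it follows from $\partial_\theta Xf=\sum_j\theta_j\,\partial_tX(x_jf)$. Two-dimensional Sobolev embedding then gives $X\mu,X\om\in L^q$ with $q=4/(3-\al')>2$. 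Regularity in $t$ alone is not enough, because the integrability exponents in $\theta$ would not match.

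With this, the argument closes at a fixed scale. Put $h^y_\de(\theta):=\de^{-1}\pi^y_*\mu(I_\de(\theta))$. Since $|x-y|\le2$,
\[ h^y_\de(\theta)\lesssim\de^{-1}\mu(T_{C\de}(\ell_{y,\theta}))\lesssim M_tX\mu(\theta,\langle y,\theta^\perp\rangle), \]
where $M_t$ is the maximal function in $t$. Change variables $(y,\theta)\mapsto\ell_{y,\theta}$, using that the projection of $\om$ onto $\theta^\perp$ has density $X\om(\theta,\cdot)$. This gives
\[ \int\|h^y_\de\|_p^p\,d\om(y)\lesssim\iint(M_tX\mu)^p\,X\om\,dt\,d\theta\le\|M_tX\mu\|_{L^q}^{p}\|X\om\|_{L^q}\lesssim1 \]
for $p=q-1=\frac{1+\al'}{3-\al'}>1$, uniformly in $\de$. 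Fatou then finishes.

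Your $L^2$ X-ray bound is exactly the case $q=2$, which yields only $p=1$. In your heavy-tube language, the fix is Cauchy--Schwarz together with $\sum_T\mu(T)^q\lesssim\de^{q-2}$. This gives a heavy-pair bound $\lesssim\lambda^{-(q-2)/2}$, uniformly in $\de$.
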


\begin{proof}[Proof of Proposition \ref{prop-good-bad}]
Fix $j\in\{1,\dots,k\}$. 

{\bf Step 1.} We first consider pairs $(x,y)\in\supp\mu\times\supp\nu^*$ with
$$\mu(T_{r_j/\rho_j\times 1}(x;y))>R_\io^{T^{-1/2}}\cdot r_j/\rho_j,$$
and denote the set of such pairs by $Bad_{j,\mu}$, which is an open subset of $[0,1)^4$. 

For each $y\in\supp\nu^*$, as $\dist(y, \supp\mu)\sim 1$, one can cover $\supp\mu$ by $\sim \rho_j/r_j$ many $r_j/\rho_j\times 1$ tubes containing $y$ of bounded overlapping. Denote this collection of tubes by $\T_{y, r_j/\rho_j\times 1}$. Then, for each $x$ with $(x,y)\in Bad_{j,\mu}$, there exists $T\in \T_{y, r_j/\rho_j\times 1}$ such that 
$$T_{r_j/\rho_j\times 1}(x;y)\subset 2T.$$
Because of bounded overlapping, there are $\lesssim R_\io^{-T^{-1/2}}\cdot \rho_j/r_j$ many $T\in \T_{y, r_j/\rho_j\times 1}$ with
$$\mu(2T)> R_\io^{T^{-1/2}}\cdot r_j/\rho_j.$$
Consequently, for each $y\in\supp\nu^*$,
$$\{x:(x,y)\in Bad_{j,\mu}\}$$
is contained in a union of $\lesssim R_\io^{-T^{-1/2}}\cdot \rho_j/r_j$ many $r_j/\rho_j\times 1$ tubes containing $y$ and hence has measure $\lesssim R_\io^{-T^{-1/2}}$. Therefore, if we denote $\pi^y(x):=\frac{y-x}{|y-x|}$ as the radial projection, then
$$\mu\times\nu^*(Bad_{j,\mu})=\int \mu(\{x:(x,y)\in Bad_{j,\mu}\})\,d\nu^*(y)= \int \left(\int \pi^y_*\mu\right) d\nu^*(y),$$
and for each $y\in\supp\nu^*$, the support of $\pi^y_*(\mu)$ is contained in $\lesssim R_\io^{-T^{-1/2}}\cdot \rho_j/r_j$ many $r_j/\rho_j$-arcs in $S^1$.

By \eqref{f(1)},\eqref{alphakt} and \eqref{verifybranching}, we see that
\[ \nu^*(B(x,r))\lesssim R_\io^{O(T^{-1})} r^\al \nu^*([0,1)^2). \]
Let $\om=R_\io^{-O(T^{-1})}\Big(\nu^*([0,1)^2)\Big)^{-1}\nu^*$, then $\om(B(x,r))\lesssim r^\al$.
Hence, by Orponen's radial projection estimate \eqref{orponenprojection}, there exists $p>1$ such that
\begin{equation}\label{badjmu}
    \mu\times\nu^*(Bad_{j,\mu})\lesssim R_\io^{O(T^{-1})}\nu^*([0,1)^2) R_\io^{-T^{-1/2}/p'} \cdot \int \|\pi^y_*\mu\|_{L^p}d\om(y)\lesssim R_\io^{-C T^{-1/2}}\nu^*([0,1)^2),
\end{equation}
where the constant $C$ in the exponent only depends on $\al$, and $R_\io^{O(T^{-1})}$ is absorbed into $R_\io^{-CT^{-1/2}}$ since $T\gg 1$.

\medskip
{\bf Step 2.} On $[0,1)^4\backslash Bad_{j,\mu}$, write 
$$\mu\times\nu^*=\sum_{Q\in\cD_{\rho_j}}\nu^*(Q)\cdot\mu\times\nu^*_Q.$$

For each $Q\in\cD_{\rho_j}$ and $y\in Q$, consider pairs $(x,y)\in[0,1)^2\times Q$ with
\begin{equation}\label{bad-r'-r-tube}\nu^*_Q(100T_{r_j\times \rho_j}(y;x))> R_\io^{T^{-1/3}}(\rho_j/r_j)^{10\eta}\cdot (r_j/\rho_j),\end{equation}
and denote the set of these pairs by $Bad_{j,Q}\subset (Bad_{j, \mu})^c$. 

Choose $\sim \rho_j/r_j$ many $r_j/\rho_j$-separated directions $\{e_i\}\subset S^1$. For each selected direction $e_i$, cover $Q$ by $\sim \rho_j/r_j$ many $r_j\times \rho_j$ tubes parallel to $e_i$ of bounded overlapping. Denote the collection of all selected $r_j\times \rho_j$ tubes by $\T_{Q, r_j\times \rho_j}$.

We find another characterization for $Bad_{j,Q}$.
Here comes the key observation in this step: for each $(x,y)\in Bad_{j,Q}$, there exists $T\in \T_{Q, r_j\times \rho_j}$ such that $T_{r_j\times \rho_j}(y;x)\subset 2T$. $T$ is associated to a $r_j/\rho_j\times 1$ tube  $T_{r_j/\rho_j\times 1}$ that has the same central line as $T$.  We denote their relationship by $T_{r_j/\rho_j\times 1}\sim T$. (We remark that different $T\in \T_{Q,r_j\times \rho_j}$ may be associated with the same  $T_{r_j/\rho_j\times 1}$.) 
We see that 
$$100T_{r_j\times \rho_j}(y;x)\subset 200T \text{ and } x\in T_{r_j/\rho_j\times 1}.$$

This observation implies that, if we call $T\in \T_{Q, r_j\times \rho_j}$ \textbf{bad} when $$\nu_Q^*(200T)> R_\io^{T^{-1/3}}(\rho_j/r_j)^{10\eta}\cdot (r_j/\rho_j),$$ then
$$Bad_{j,Q}\subset \bigcup_{\substack{T\in \T_{Q, r_j\times \rho_j}, \,bad\\T_{r_j/\rho_j\times 1}\sim T}}T_{r_j/\rho_j\times 1}\times T.$$
Since $(x,y)\notin Bad_{j, \mu}$, by our construction from Step 1, it follows that
$$\mu(T_{r_j/\rho_j\times 1})< R_\io^{T^{-1/2}}\cdot r_j/\rho_j.$$
Therefore,
$$\mu\times\nu^*_Q(Bad_{j,Q})\leq \sum_{\substack{T\in \T_{Q, r_j\times \rho_j}, \,bad\\T_{r_j/\rho_j\times 1}\sim T}}\mu(T_{r_j/\rho_j\times 1})\nu^*_Q(T)\leq R_\io^{T^{-1/2}}\cdot r_j/\rho_j \sum_{T\in \T_{Q, r_j\times \rho_j}, \,bad}\nu^*_Q(T)$$
$$=R_\io^{T^{-1/2}}\cdot r_j/\rho_j \sum_{\substack{e_i\in S^1 \\r_j/\rho_j\text{-separated}}} \sum_{\substack{T\in \T_{Q, r_j\times \rho_j}\\T\parallel e_i, \,bad}} \nu^*_Q(T).$$
As tubes in $\T_{Q, r_j\times \rho_j}$ have bounded overlap, by \eqref{bad-r'-r-tube}, for each $e_i$, the number of tubes $T$ in the sum is $\lesssim R_\io^{-T^{-1/3}}(\rho_j/r_j)^{10\eta} (\rho_j/r_j)$. So, by Cauchy-Schwarz,
$$\begin{aligned}&\left(\mu\times\nu^*_Q(Bad_{j,Q})\right)^2\\\lesssim &  (R_\io^{T^{-1/2}} r_j/\rho_j)^2\cdot \rho_j/r_j \cdot R_\io^{-T^{-1/3}}(\rho_j/r_j)^{-10\eta} (\rho_j/r_j)\sum_{\substack{e_i\in S^1 \\r_j/\rho_j\text{-separated}}} \sum_{\substack{T\in \T_{Q, r_j\times \rho_j}\\T\parallel e_i, \,bad}} \nu^*_Q(T)^2\\
\leq &R_\io^{2T^{-1/2}-T^{-1/3}}(\frac{\rho_j}{r_j})^{-10\eta} \sum_{T\in \T_{Q, r_j\times \rho_j}} \nu^*_Q(T)^2\\
= &R_\io^{2T^{-1/2}-T^{-1/3}}(\frac{\rho_j}{r_j})^{-10\eta}  \iint \sum_{T\in \T_{Q, r_j\times \rho_j}}\chi_{T}(y)\chi_{T}(y')\, d\nu^*_Q(y)\,d\nu^*_Q(y').
\end{aligned}$$

Fix $y'$ and consider $|y-y'|\sim 2^{-l}$ with $2^{-l}\in[r_j, \rho_j]$. Then the number of $T\in \T_{Q, r_j\times \rho_j}$ containing both $y,y'$ is $\lesssim 2^l\rho_j$. Also, since $\nu^*$ satisfies \eqref{nustarfrost}, it follows that
$$\nu^*_Q(B(y, 2^{-l}))\lesssim (\frac{\rho_j}{r_j})^\eta (\frac{2^{-l}}{\rho_j})^{1+\e}.$$
Together we have
$$\left(\mu\times\nu^*_Q(Bad_{j,Q})\right)^2 \lesssim R_\io^{-\de }(\frac{\rho_j}{r_j})^{-9\eta} \left( (r_j/\rho_j)^{1+\e}+\sum_{l:2^{-l}\in[r_j, \rho_j]} 2^l \rho_j\cdot(2^{l}\rho_j)^{-1-\e}\right).$$
Here, the first term in the bracket comes from $|y-y'|\le r_j$, while the second term comes from $|y-y'|\sim 2^{-l}$.
We see that
\begin{equation}\label{badjQ}
    \mu\times\nu^*_Q(Bad_{j,Q})\lesssim R_\io^{-\de }. 
\end{equation} 

\medskip
{\bf Step 3.} Finally take
$$Bad:=\bigcup_{j=1}^k Bad_{j,\mu}\cup\bigcup_{j=1}^k \bigcup_{Q\in\cD_{\rho_j}}Bad_{j,Q}.$$
From \eqref{badjmu} and \eqref{badjQ},
$$\mu\times\nu^*(Bad)\lesssim \sum_{j=1}^k\left(R_\io^{-CT^{-1/2}}\nu^*([0,1)^2)+\sum_{Q\in\cD_{\rho_j}}\nu^*(Q)R_\io^{-\de}\right)\lesssim R_\io^{-\delta }\nu^*([0,1)^2).$$
Here, we used that $k\le C(\eta)$ is bounded by a constant.
\end{proof}

\section{An estimate for the good part}\label{sec7}

We still work with the $\mu, \nu^*$ as we did in the last section. Recall that $\mu_{good, \io,y}$ is defined in \eqref{def-mu-good-y}. The purpose of this section is to estimate
\[ \int \|d_*^y(\mu_{good,\io,y})\|_{L^2(\R)}^2d\nu^*(y). \]
The main result is stated as follows.

\begin{proposition}\label{prop-our-L2-good}
Let $\mu,\nu^*$ be as in the last section. Then
\begin{equation}
\int \| d^y_*(\mu_{good,\io,y}) \|_{L^2(\R)}^2d\nu^*(y)<R_\io^{-\de}\nu^*([0,1)^2).
\end{equation}
\end{proposition}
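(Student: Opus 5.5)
The plan follows the Guth--Iosevich--Ou--Wang reduction: convert the pushforward $L^2$ norm into a weighted $L^2$ norm of a spherical average, then run the ball inflation from Section~\ref{subsecballinflation}.

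\textbf{Reduction to a weighted $L^2$ estimate.} Using the spherical-average identity (as in \cite[Section 4]{guth2020falconer}), together with $\dist(\supp\mu,\supp\nu^*)\gtrsim 1$ and the fact that $\mu_{good,\io,y}$ has Fourier support in the annulus $|\om|\sim R_\io$, I will bound
\[
\int \|d^y_*(\mu_{good,\io,y})\|_{L^2}^2\,d\nu^*(y)\lesssim R_\io\int\!\!\int_{1}^{2}\big|\wh{\mu_{good,\io,y}}\ast\wh{\si_t}\,\big|^2(y)\,dt\,d\nu^*(y).
\]
To remove the dependence of the good part on $y$, I will work cube by cube: on each $Q\in\cD_{R_\io^{-1}}$ the good part is a fixed function $F_Q$. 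I then locally-constant pass to $\nu^*\approx \nu^*(Q_{R_\io^{-1}})\,|X|^{-1}\chi_X$, where $X=N_{R_\io^{-1}}(\supp\nu^*)$. For each $t$, the function $F=\mu_{good}\ast\wh{\si_t}$ has Fourier support in $S^1_{tR_\io}$, and $\|F\|_{L^2(B(0,1))}^2\lesssim R_\io^{-1}\int|\wh\mu|^2\,d\si_{R_\io}\lesssim R_\io^{-\al+1+O(\e)}$ by the Frostman energy bound for $\mu$. The task is therefore the good-part weighted estimate \eqref{keyest2} with exponent $D_\gd(\al,\beta)$: the final power of $R_\io$ is $D_\gd-(\al-1)$, and it must be negative.

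\textbf{Ball inflation.} With the intervals $[a_j,b_j]$ from Lemma~\ref{structurelem} and the scales $\rho_j,r_j$ from \eqref{defrhor}, I will prove three estimates.

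First, a lossless inflation step from $N_{r_j}(X)$ to $N_{\rho_j}(X)$ for each $j$. This is Ingredient~\ref{lem1}, imported from \cite{liu2026lebesgue}. Within each $\rho_j$-cube, the good wave packets in a cap $\theta_j$ meet only tubes with $\nu^*_{\wt Q_j}(T)\lessapprox r_j/\rho_j$ by \eqref{neglectRiota}. Combined with the $(1+\e)$-Frostman property \eqref{nustarfrost} and admissibility \eqref{increspeed}, this gives a Mizohata--Takeuchi-type bound with loss $R_\io^{O(\eta+T^{-1/3})}$.

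Second, the gaps $[r_j,\rho_{j+1}]$ and $[0,\rho_1]$ cost at most $\rho_{j+1}/r_j$ each, by trivial inclusion with averages. By \eqref{tinyrrho} their total cost is $R_\io^{\eta}$.

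Third, the final step from $X$ to $N_{r_k}(X)$ uses Ingredient~\ref{lem2} with $r=r_k$. This requires $f^*$ to be $\al$-Katz-Tao on $[b_k,1]$, which follows from \eqref{alkt}.

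\textbf{Numerology.} Combining the three estimates, the exponent is $1-b_k+\frac{f^*(b_k)-\al}{\al+1}$ plus errors. Substituting $f^*(b_k)$ from \eqref{rkcoveringnumber} and maximizing over $b_k$, exactly as in the proof of Claim~\ref{claim4}, gives $\max\{\frac{\beta-\al}{2\beta-1},\frac{\beta-\al}{(\al+1)(2\beta-\al-1)}\}$. The conditions \eqref{cond1} and \eqref{cond2} are equivalent to this being $<\al-1$ with a margin. Choosing $\de\ll T^{-1}\ll\eta\ll\e$ absorbs all the losses and yields $R_\io^{-\de}\nu^*([0,1)^2)$.

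\textbf{Main obstacle.} The hardest step is Ingredient~\ref{lem2}, the weighted $L^2$ estimate with the gain factor $(|X|_{R^{-1}}/|X|_r)^{u/(u+1)}$. My first attempt would be to decompose $F$ into $r^{-1}R^{-1}\cdot$-scale wave packets inside each $r$-ball. I would then bound $\|F\|_{L^2(X\cap B_r)}^2$ by interpolating two estimates: the trivial bound (each $R^{-1}$-ball sees a tube of $L^2$ mass), and a Katz--Tao incidence count of $R^{-1}$-balls along $R^{-1}\times r$ tubes, which uses \eqref{alkt}. Optimizing the threshold on the number of balls per tube should produce the exponent $u/(u+1)$.
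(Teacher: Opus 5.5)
Your architecture matches the paper's. Liu's identity and local constancy reduce the problem to $\frac{\nu^*([0,1)^2)}{|X_{R_\io^{-1}}|}\sum_Q\int_Q|F_Q|^2$. This is then inflated by the trivial gap steps (Lemma~\ref{ballinflation1}), the lossless steps on $[r_j,\rho_j]$ (Lemma~\ref{ballinflation2}), and a final weighted step below $r_k$ (Lemma~\ref{ballinflation3}), followed by the energy bound and the numerology of Claim~\ref{claim4}.

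The gap is in the step you flag as the main obstacle: the weighted estimate of Lemma~\ref{lemwtedL2}, namely $\|Ef\|_{L^2(X)}^2\lessapprox |X|^{\frac{s}{s+1}}\|f\|_2^2$ for $X\subset B_R$ a union of unit balls with Katz--Tao $R^{-1}$-dilate, with $s=\al$. The mechanism you sketch cannot produce this.
\begin{itemize}
\item $R^{-1}\times r$ tubes are not wave packets of $F$ inside an $r$-ball. By the uncertainty principle those are $(r/R)^{1/2}\times r$ slabs, so $|F|$ is not essentially constant on your tubes.
\item Counting $R^{-1}$-balls along such tubes cannot see \eqref{alkt} when $\al>1$. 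A straight row of $rR$ balls of radius $R^{-1}$ is Katz--Tao. For it, both the trivial bound and any bound through the maximal number of balls per $R^{-1}\times r$ tube give the factor $rR$. So no interpolation between them reaches the required $(rR)^{\frac{\al}{\al+1}}$; the truth there is $(rR)^{1/2}$.
\item A single-scale $TT^*$/Schur argument using only $|\widehat{\si}(x)|\lesssim|x|^{-1/2}$ and the Katz--Tao condition does better, but still only gives $|X|^{1-\frac{1}{2s}}$. This is strictly worse than $|X|^{\frac{s}{s+1}}$ for $s>1$. Since \eqref{cond2} is exactly the condition coming from the endpoint $f(x_k)=\al x_k$, where this exponent enters, such a weaker bound would shrink the range in Theorem~\ref{maintheorem}.
\end{itemize}

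The missing idea is that extremal configurations live at an intermediate scale. Take $f$ a bump on a $\de$-cap, and let $X$ fill the $\de^{-1}\times\de^{-2}$ rectangle where $|Ef|\sim\de$ with $\de^{-1}$ Katz--Tao blocks of size $\de^{-1}$. Then $\|Ef\|_{L^2(X)}^2/\|f\|_2^2\sim\de^{-s}=|X|^{\frac{s}{s+1}}$, so all scales must be controlled at once.

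The paper does this with a broad--narrow decomposition at $K=R^{\e^2}$:
\begin{itemize}
\item The broad part is at most $K^{O(1)}|X|^{1/2}\|f\|_2^2$, by Cauchy--Schwarz and bilinear restriction.
\item In the narrow part, each $K^{-1}$-cap is parabolically rescaled. The Katz--Tao condition gives $|X\cap Q|\lesssim K(Kr)^s$ for $rK\times rK^2$ rectangles $Q$ in that direction.
\item Pigeonholing and random sampling then give a set whose rescaled image is again Katz--Tao, with measure $\approx K^{-(1+s)}|X|$.
\item Induction on $R$ closes precisely because $s/(s+1)$ is the exponent that makes this rescaling lossless.
\end{itemize}

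Smaller points:
\begin{itemize}
\item The gap steps cost $(r_j/\rho_{j+1})^2$, not $\rho_{j+1}/r_j$.
\item The lossless step does not use \eqref{nustarfrost}; that property is what makes the bad set small in Proposition~\ref{prop-good-bad}. The lossless step instead uses local orthogonality over $r_j/\rho_j$-caps (via \eqref{increspeed}), the dichotomy that $F_{Q,\tau}$ either vanishes on $Q$ or is controlled by $F_{\wt Q,\tau}$ there, and a count of $r_j$-cubes in retained $r_j\times\rho_j$ tubes using the uniformity of $\nu^*$.
\item The energy bound holds only after averaging over $t$, not for each fixed $t$.
\end{itemize}
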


By the observation in \cite{liu20192}, for each $y\in\supp\nu^*$,
$$ \| d^y_*(\mu_{good,\io,y}) \|_{L^2(\R)}^2 = \int_0^\infty | \mu_{good,\io,y} * \sigma_t (y)|^2 t^2 dt,$$
where $\sigma_t=d\si_t$ denotes the normalized arc-length measure on $S^1_t$, the circle of radius $t$. This is
$$\lesssim \int_0^\infty | \mu_{good,\io,y} * \sigma_t (y)|^2 t dt $$
because $\mu_{good,\io,y}$ is essentially supported near $\supp\mu$, away from $y\in\supp\nu^*$ (see Lemma 5.2 in \cite{guth2020falconer}). Then, by the $L^2$-identity proved in \cite{liu20192}, for every $y \in \R^2$, 
$$\int_0^\infty | \mu_{good, \io,y} * \sigma_t (y)|^2 t dt=\int_0^\infty | \mu_{good, \io,y} * \wh{\sigma_r} (y)|^2 r dr.$$
By the definition of $\mu_{good,\io, Q(y)}$, $\mu_{good, \io,y}$ in \eqref{def-good-i-Q}, \eqref{def-mu-good-y}, we see that the Fourier support of $\mu_{good, \io,y}$ essentially lies in $|\xi|\sim R_\io$. We hence have
\begin{equation}\label{fixR}
    \int_0^\infty | \mu_{good,\io, Q(y)} * \wh{\sigma_r} (y)|^2 r dr\lesssim \int_{r\sim R_\io} | \mu_{good,\io, Q(y)} * \wh{\sigma_r} (y)|^2 rdr+\RD(R_\io).
\end{equation}

Notice that the dependence on $y$ in $\mu_{good, \io,y}$ has not yet made any difference because so far everything is pointwise. 

Now we fix $R\sim R_\io$ and consider the integral with respect to $d\nu^*(y)$:
\begin{equation}\label{L2-good-formulation}\int | \mu_{good,\io, Q(y)} * \wh{\sigma_R} (y)|^2 d\nu^*(y)= \int | (\widehat{\mu_{good,\io, Q(y)}} d\sigma_R)^\vee (y)|^2 d\nu^*(y).
\end{equation}

To deal with the dependence on $Q(y)$, a careful analysis on bad caps associated with different $y$ is required. 

Recall \eqref{defrhor} that $\rho_j=R_\io^{-a_j}, r_j=R_\io^{-b_j}$, and 
\[R_\io^{-1}< r_k<\rho_k<\dots< r_1<\rho_1\le 1.\]
For convenience, we introduce the following notation.
\begin{definition}\label{defF}
    Define 
    \[F:=(\wh\mu_\io d\si_R)^\vee= (\sum_{T\in\T_\io}\wh{M_T \mu}\ d\si_R)^\vee.\]
    For each $T\in\T_\io$, define
    \[ F_T:=(\wh{M_T \mu}\ d\si_R)^\vee. \]
    For each $R_\io^{-1/2}$-cap $\theta$, define
    \[ F_\theta:=(\sum_{T\in\T_{\io,\theta}}\wh{M_T\mu}\  d\si_R)^\vee=\sum_{T\in\T_{\io,\theta}}F_T. \]
    For $r\ge R_\io^{-1/2}$ and an $r$-cap $\tau$, define
    \[ F_\tau:=\sum_{\theta\subset \tau}F_\theta. \]
\end{definition}

\begin{lemma}\label{lemrapdec}
    For each $T\in \T_\io$, $F_T 1_{B(0,1)}$ is essentially supported in $T$.
\end{lemma}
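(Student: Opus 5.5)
The plan is to compute $F_T$ directly and read off its decay outside $T$. By Definition \ref{defF}, $F_T=(\wh{M_T\mu}\,d\si_R)^\vee = M_T\mu * (d\si_R)^\vee$. Since $d\si_R$ is arc-length measure on the circle of radius $R\sim R_\io$, the function $\wh{M_T\mu}$ only matters on $S^1_R$. By \eqref{MT}, $M_T\mu=\eta_T(\psi_{\iota,\theta}\wh\mu)^\vee$, so $\wh{M_T\mu}=\wh{\eta_T}*(\psi_{\iota,\theta}\wh\mu)$. This is essentially supported in an $R_\io^{1/2-\de}$-neighbourhood of $B_\theta$, up to $\RD(R_\io)$ tails. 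Here $\wh{\eta_T}$ decays rapidly outside the dual rectangle of $T$, which has dimensions $\sim R_\io^{1/2-\de}\times 1$.

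\textbf{Step 1 (frequency localization).} Write $\wh{M_T\mu}\,d\si_R=\wh{M_T\mu}\,\chi_{\theta'}\,d\si_R+\err$. Here $\chi_{\theta'}$ is a smooth cutoff to the cap $\theta'\subset S^1_R$ of angular width $\sim R_\io^{-1/2+\de}$ around $\theta$. The error has $L^\infty$ (hence $L^1(d\si_R)$) size $\RD(R_\io)$, using the Schwartz decay of $\wh{\eta_T}$ and the boundedness of $\wh\mu$.

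\textbf{Step 2 (physical localization via stationary phase).} Write
\[F_T(x)=\int M_T\mu(z)\,\big(\chi_{\theta'}d\si_R\big)^\vee(x-z)\,dz\]
for the main term. The kernel $(\chi_{\theta'}d\si_R)^\vee$ is a wave packet: on $|w|\lesssim 2$ it is concentrated in a $R_\io^{-1/2+\de}\times 1$ rectangle through the origin in the direction normal to $\theta$, and it is $\RD(R_\io)$ elsewhere in $B(0,2)$. To see this, integrate by parts in the angular variable. The phase $R\,w\cdot\om$ with $\om\in\theta'$ has angular derivative $\gtrsim R|w|\,|\sin\angle(w,\om^\perp)|$. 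This is $\gtrsim R_\io^{1/2+\de/2}$ once $w$ is $R_\io^{-1/2+\de}$-far from the line spanned by $\om_\theta$, while each derivative of $\chi_{\theta'}$ costs only $R_\io^{1/2-\de}$. Since $M_T\mu=\eta_T\cdot(\cdots)$ is supported (up to Schwartz tails of $(\psi_{\iota,\theta}\wh\mu)^\vee$ localized at frequency $\sim R_\io$) in $T$, convolving with this kernel keeps the function essentially inside $CT$ for $x\in B(0,1)$. The tubes have width $R_\io^{-1/2+\de}$, so a constant dilation can be absorbed by the $\de$ room (or by using $R_\io^{\de/2}$ in the kernel estimate).

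\textbf{Step 3 (control of the tails).} I expect the main obstacle to be the bookkeeping of tails, specifically showing $\|F_T\|_{L^\infty(B(0,1)\setminus T)}\lesssim \RD(R_\io)\|\mu\|$. Two sources must be combined: the part of $\eta_T(\cdots)$ outside $T$, which is zero by the support of $\eta_T$ apart from its smooth falloff, and the kernel tail. To handle them I would use the $\de$-margin: prove concentration in $R_\io^{\de/2}T$-width, then note this lies in $T$ up to a harmless constant, interpreting ``essentially supported'' in the usual sense of $\RD(R_\io)$ error.
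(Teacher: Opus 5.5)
Your proposal is correct and follows the paper's own argument. Both localize $\wh{M_T\mu}\,d\si_R$ to an arc of $S^1_R$ around $\theta$, and both write $F_T$ as $M_T\mu$ convolved with the inverse Fourier transform of the localized arc measure. Both then use that this kernel is concentrated, within the relevant $O(1)$-ball, on a tube through the origin parallel to $T$, so $F_T1_{B(0,1)}$ lies in a bounded dilate of $T$ up to $\RD(R_\io)$ errors. The difference is that you supply the smooth cutoff, the non-stationary-phase estimate and the $\de$-margins that the paper leaves implicit.

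There is one small slip. The angular derivative of the phase has size $R|w|\,|\sin\angle(w,\om)|$, not $R|w|\,|\sin\angle(w,\om^\perp)|$. The correct form is the one your subsequent distance-to-the-line bound actually uses, so the argument is unaffected.
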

\begin{proof}
Let $S^1_R$ be the circle of radius $R$.
    Note that $\wh{M_T \mu}$ is essentially supported in a $R_\io^{1/2}\times R_\io$-rectangle. Therefore $\wh{M_T\mu}\ d\si_R$ is essentially supported in an arc of length $R_\io^{1/2}$ in $S_R^1$, denoted by $J$. We have
    \[ F_T\approx (\wh{M_T\mu}\  1_J d\si_R)^\vee=M_T\mu*(1_Jd\si_R)^\vee. \]
    Since inside $B(0,1)$, $M_T\mu$ is essentially supported in $T$ and $(1_J d\si_R)^\vee$ is essentially supported in a $R_\io^{-1/2}\times 1$-rectangle $T_0$, which is the translation of $T$ to the origin, we see that $F_T$ is essentially supported in $T$.
\end{proof}

We also define the good functions which may be compared with Definition \ref{badtubedef}.
\begin{definition}\label{defFQ}
Fix $R_\io^{-1}\le r\le 1$ and $Q_r\in\cD_r$. Define
\begin{equation}
    F_{Q_r}:=\sum_{T\in\T_{\io,Q_r,good}}F_T.
\end{equation}
\end{definition}
\begin{remark}
    {\rm
    We only care about the value of $F_{Q_r}$ inside $Q_r$.
    }
\end{remark}
From the definition, we see that for each $Q\in\cD_{R_\io^{-1}}$ and any $y\in Q$,
\[ (\wh{\mu_{good,\io,y}}d\si_R)^\vee=F_Q.  \]

Now we can rewrite the integrand in \eqref{L2-good-formulation} as
\begin{equation}\label{sumFQ}
    \int |(\wh{\mu_{good,\io,y}}d\si_R)^\vee(y)|^2 d\nu^*(y)= \sum_{Q\in\cD_{R_\io^{-1}}}\int_Q |F_Q(y)|^2 d\nu^*(y).
\end{equation}
Since $F_Q$ is the sum of some wave packets from $\T_\io$, we see that $\wh F_Q$ is essentially supported in $B(0,R_\io)$. By the locally constant property, $F_Q$ is essentially constant in each $Q\in\cD_{R_\io^{-1}}$. Now, for each $r$, we denote
\[ \cD_r(\nu^*):=\{Q_r\in\cD_r: Q_r\cap \supp\nu^*\neq\emptyset\}, \]
\[ X_r:=\bigcup_{Q\in\cD_r(\nu^*)}Q. \]
Therefore, up to a rapidly decaying error, we may bound \eqref{sumFQ} by
\begin{equation}\label{averaged}
    \sum_{Q\in\cD_{R_\io^{-1}}(\nu^*)} \frac{\nu^*(Q)}{|Q|} \int_Q |F_Q(y)|^2dy. 
\end{equation} 
Since $\nu^*(Q)$ are comparable for all $Q\in\cD_{R_\io^{-1}}(\nu^*)$, we may bound the above quantity by
\begin{equation}\label{toestbyinflation}
    \frac{\nu^*([0,1)^2)}{|X_{R_\io^{-1}}|}\sum_{Q\in \cD_{R_\io^{-1}}} \int_Q |F_Q|^2. 
\end{equation} 

Now, we estimate \eqref{toestbyinflation} by a ball inflation process. It consists of the following three lemmas.

\begin{lemma}\label{ballinflation1}
    For $j\in \{1,\dots,k\}$, we have
    \[ \frac{1}{|X_{\rho_j}|}\sum_{\wt Q\in \cD_{\rho_j}(\nu^*)}\int_{\wt Q}|F_{\wt Q}|^2\lesssim (\frac{r_{j-1}}{\rho_j})^2  \frac{1}{|X_{r_{j-1}}|}\sum_{Q\in \cD_{r_{j-1}}(\nu^*)}\int_Q|F_Q|^2. \]
    Here, $r_0=1$.
\end{lemma}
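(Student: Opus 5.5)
This is the trivial inflation step across the gap $[r_{j-1},\rho_j]$, where no Frostman structure is available. The plan is to pass from $\rho_j$-cubes to the larger $r_{j-1}$-cubes containing them, paying only the ratio of the normalizing volumes.

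\textbf{Step 1: compare the good functions.} Fix $\wt Q\in\cD_{\rho_j}(\nu^*)$ and let $Q\in\cD_{r_{j-1}}(\nu^*)$ be the unique cube containing it. The scales satisfy $\rho_j\le r_{j-1}$, and the set $\{i: r_i\ge \rho_j\}$ equals $\{1,\dots,j-1\}$. Moreover $\Theta_i(\wt Q)=\Theta_i(Q)$ for $i\le j-1$, since both are defined through the same $r_i$-cube. Hence $\Theta(\wt Q)=\Theta(Q)$.

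The tube condition $2T\cap \wt Q\neq\emptyset$ is stronger than $2T\cap Q\neq\emptyset$. So on $\wt Q$ the two good families differ only by tubes that miss $2\wt Q$. By Lemma \ref{lemrapdec}, such wave packets are essentially supported in their tubes. Therefore $F_{\wt Q}=F_Q$ on $\wt Q$ up to $\RD(R_\io)$ errors, which are negligible and absorbed as usual.

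\textbf{Step 2: sum over subcubes.} Summing over $\wt Q\subset Q$ gives
\[
\sum_{\wt Q\in\cD_{\rho_j}(\nu^*)}\int_{\wt Q}|F_{\wt Q}|^2\le \sum_{Q\in\cD_{r_{j-1}}(\nu^*)}\int_Q|F_Q|^2 .
\]

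\textbf{Step 3: compare the normalizations.} It remains to bound $|X_{r_{j-1}}|/|X_{\rho_j}|$. Every $Q\in\cD_{r_{j-1}}(\nu^*)$ contains at least one cube of $\cD_{\rho_j}(\nu^*)$. Hence
\[
|X_{\rho_j}|\ge \rho_j^2\,\#\cD_{r_{j-1}}(\nu^*)=(\rho_j/r_{j-1})^2|X_{r_{j-1}}|.
\]
Dividing yields the factor $(r_{j-1}/\rho_j)^2$.

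\textbf{Main obstacle.} The only delicate point is Step 1, namely checking that the bad-cap sets agree and handling wave-packet tails. Everything else is bookkeeping. For $j=1$ we have $r_0=1$: the set $\Theta(\wt Q)$ is empty, and $Q=[0,1)^2$ with $F_Q=F$, so the same argument applies.
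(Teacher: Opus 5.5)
Your proposal is correct and follows the paper's proof: identify $F_{\wt Q}$ with $F_Q$ on $\wt Q$ up to rapidly decaying tails, use that the $\rho_j$-cubes inside each $r_{j-1}$-cube are disjoint, and pay the volume ratio $|X_{r_{j-1}}|/|X_{\rho_j}|$. Your Step 1 spells out the check $\Theta(\wt Q)=\Theta(Q)$, which the paper leaves implicit. Your Step 3 states the volume comparison as the inequality $|X_{r_{j-1}}|/|X_{\rho_j}|\le (r_{j-1}/\rho_j)^2$; the paper writes this as an equality, which need not hold when an $r_{j-1}$-cube contains several cubes of $\cD_{\rho_j}(\nu^*)$.
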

\begin{proof}
This is a trivial estimate.
We note that for $\wt Q\in\cD_{\rho_j}, Q\in \cD_{r_{j-1}}$ with $\wt Q\subset Q$, then $F_{\wt Q}=F_Q$ on $\wt Q$ up to rapidly decaying tail. Noting that the integration domain on the right-hand side contains the integration domain on the left-hand side, hence the factor we lose is $\le \frac{|X_{r_{j-1}}|}{|X_{\rho_j}|}=\frac{|Q|}{|\wt Q|}=(\frac{r_{j-1}}{\rho_j})^2$.
\end{proof}

\begin{lemma}\label{ballinflation2}
    For $j\in \{1,\dots,k\}$, we have
    \[\frac{1}{|X_{r_j}|} \sum_{Q\in \cD_{r_j}(\nu^*)}\int_{Q}|F_{Q}|^2\lesssim R_\io^{O(T^{-1/3})}(\frac{\rho_j}{r_j})^{O(\eta)}  \frac{1}{|X_{\rho_j}|} \sum_{\wt Q\in \cD_{\rho_{j}}(\nu^*)}\int_{\wt Q}|F_{\wt Q}|^2. \]
\end{lemma}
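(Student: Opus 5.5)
The plan is to work one $\rho_j$-cube at a time, exploiting that the good functions $F_Q$, for $Q\subset\wt Q$, differ from $F_{\wt Q}$ only by removing caps that are bad at scale $j$. Fix $\wt Q\in\cD_{\rho_j}(\nu^*)$ and $Q\in\cD_{r_j}(\nu^*)$ with $Q\subset\wt Q$. For $i<j$ we have $r_i\ge\rho_j$, so $Q$ and $\wt Q$ lie in the same $r_i$-cube and $\Theta_i(Q)=\Theta_i(\wt Q)$. Hence $\T_{\io,Q,good}=\T_{\io,\wt Q,good}\setminus\T_{\io,Q,j}$, up to tubes not meeting $2Q$, which contribute only a rapidly decaying tail on $Q$ by Lemma \ref{lemrapdec}. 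Partition $S^1$ into the $(r_j/\rho_j)$-caps $\theta_j$ and write $F_{\wt Q}=\sum_{\theta_j}F_{\wt Q,\theta_j}$, where $F_{\wt Q,\theta_j}$ collects the good wave packets of $F_{\wt Q}$ with direction in $\theta_j$. This gives, on $Q$,
\[ F_Q=\sum_{\theta_j\notin\Theta_j(Q)}F_{\wt Q,\theta_j}+\RD(R_\io). \]

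The second step is to record the two Fourier-analytic facts supplied by admissibility, $R_\io\rho_j\le(R_\io r_j)^2$ (see \eqref{increspeed}). The Fourier support of $F_{\wt Q,\theta_j}$ lies in an arc of $S^1_R$ of length $\sim R_\io r_j/\rho_j$. That arc sits in a rectangle of size $R_\io r_j/\rho_j\times R_\io(r_j/\rho_j)^2$, and distinct arcs are separated by $\gtrsim R_\io r_j/\rho_j\ge r_j^{-1}$.

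\emph{Local orthogonality.} Because the arc separation is at least $r_j^{-1}$, the pieces are almost orthogonal on any $r_j$-cube. With a smooth weight adapted to $Q$, this gives $\int_Q|F_Q|^2\lesssim\sum_{\theta_j\notin\Theta_j(Q)}\int_{w_Q}|F_{\wt Q,\theta_j}|^2$.

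\emph{Locally constant property.} $|F_{\wt Q,\theta_j}|^2$ is essentially constant on rectangles of size $\rho_j/(R_\io r_j)\times\rho_j^2/(R_\io r_j^2)$ with long side in direction $\theta_j$. By admissibility these rectangles are contained in $r_j\times\rho_j$ tubes in direction $\theta_j$. Tile $\wt Q$ by such tubes $\T'_{j,\theta_j}$.

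The third step uses the good condition. For any tube $T\in\T'_{j,\theta_j}$ meeting $10T\cap Q\neq\emptyset$ for some $Q$ with $\theta_j\notin\Theta_j(Q)$, we have $\nu^*_{\wt Q}(T)\le R_\io^{T^{-1/3}}(\rho_j/r_j)^{10\eta}(r_j/\rho_j)$. All $r_j$-cubes of $\supp\nu^*$ carry comparable mass by uniformity, and by \eqref{verifybranching} with error $R^{O(1/T)}$. So the number of cubes $Q\in\cD_{r_j}(\nu^*)$ inside $T$ is at most $R_\io^{O(T^{-1/3})}(\rho_j/r_j)^{10\eta}(r_j/\rho_j)\,\#\{Q\in\cD_{r_j}(\nu^*):Q\subset\wt Q\}$. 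Using local constancy on $T$,
\[ \sum_{Q\subset\wt Q,\ \theta_j\notin\Theta_j(Q)}\int_{Q}|F_{\wt Q,\theta_j}|^2\lesssim R_\io^{O(T^{-1/3})}\Big(\frac{\rho_j}{r_j}\Big)^{10\eta}\frac{|X_{r_j}\cap\wt Q|}{|\wt Q|}\int_{\wt Q}|F_{\wt Q,\theta_j}|^2 . \]
This holds tube by tube: each tube contributes its average times $|T\cap X_{r_j}|\lesssim(\text{loss})\cdot\frac{|T|}{|\wt Q|}|X_{r_j}\cap\wt Q|$.

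The final step is to sum. First sum over $\theta_j$ and use $L^2$-orthogonality of the $F_{\wt Q,\theta_j}$ on $\wt Q$ (again using a weight and rapid-decay tails) to get $\int_{\wt Q}|F_{\wt Q}|^2$. Then sum over $\wt Q$. By uniformity, $|X_{r_j}\cap\wt Q|/|\wt Q|=|X_{r_j}|/|X_{\rho_j}|$, and dividing by $|X_{r_j}|$ gives the lemma. Note that the $(1+\e)$-Frostman property is not needed here; it was spent in Lemma \ref{badestimatelem}.

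The main obstacle is making the locally constant and orthogonality steps rigorous with cutoffs. The function $F_{\wt Q,\theta_j}$ is only "essentially" constant on the $r_j\times\rho_j$ tubes. Moreover, the bad-cap condition is phrased with $10T\cap Q_j\neq\emptyset$ and grid tubes, which must be matched to the weighted tube averages. I would handle this by replacing indicators by Schwartz weights adapted to $T$ and $\wt Q$, absorbing constant dilations into the $R_\io^{T^{-1/3}}$ loss and the tails into $\RD(R_\io)$.
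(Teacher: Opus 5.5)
Your outline has a genuine gap at ``using local constancy on $T$''. The locally constant scale you derived for $F_{\wt Q,\theta_j}$ is the dual of the convex hull of an $(r_j/\rho_j)$-arc of $S^1_R$. That dual is a rectangle $P$ of size $\frac{\rho_j}{Rr_j}\times\frac{\rho_j^2}{Rr_j^2}$.

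Admissibility \eqref{increspeed} only says that $P$ \emph{fits inside} an $r_j\times\rho_j$ tube. Unless $R\rho_j=(Rr_j)^2$ exactly, $P$ is strictly shorter than $\rho_j$, and it can even be shorter than $r_j$. So $|F_{\wt Q,\theta_j}|^2$ is not essentially constant on $T$, and ``each tube contributes its average times $|T\cap X_{r_j}|$'' does not follow.

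If you only know that $|F_{\wt Q,\theta_j}|^2\approx c_P$ on each $P$, with the $c_P$ otherwise uncontrolled, you would need $|P\cap X_{r_j}|\lesssim R_\io^{O(T^{-1/3})}(\rho_j/r_j)^{10\eta}\frac{|P|}{|\wt Q|}|X_{r_j}\cap\wt Q|$ for every $P$. This fails. A short $P$ can lie inside a single $r_j$-cube of $X_{r_j}$, while the non-heaviness of $T$ only controls the number of $r_j$-cubes along its whole length $\rho_j$. Smooth weights and Schwartz tails do not repair this. What is missing is that the energy of $F_{\wt Q,\theta_j}$ is spread evenly along the full length of the tube. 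That fact comes from the curvature of the arc and is invisible at the level of its convex hull.

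The paper gets this from one further decomposition. Split each cap $\theta_j$ into $(R\rho_j)^{-1/2}$-caps $\si$; this is legitimate because $(R\rho_j)^{-1/2}\le r_j/\rho_j$ by \eqref{increspeed}. The corresponding arcs are $(R/\rho_j)^{1/2}\ge r_j^{-1}$ apart, again by \eqref{increspeed}. Write $T'$ for your $r_j\times\rho_j$ tube and $Y=X_{r_j}\cap\wt Q$. Local orthogonality on $r_j$-cubes then gives $\int_{T'\cap Y}|F_{\wt Q,\theta_j}|^2\lesssim\sum_{\si\subset\theta_j}\int_{T'\cap Y}|F_{\wt Q,\si}|^2$.

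Each $|F_{\wt Q,\si}|^2$ is locally constant on tubes $T_\si$ of size $w\times\rho_j$ with $w=(\rho_j/R)^{1/2}\le r_j$, that is, on \emph{full-length} tubes. Such a tube meets each $r_j$-cube in area $\lesssim wr_j$, so $|T_\si\cap Y|\lesssim wr_jN(T')$, where $N(T')$ counts the $r_j$-cubes of $Y$ meeting $T'$. Your counting bound on $N(T')$ then gives $|T_\si\cap Y|\lesssim R_\io^{O(T^{-1/3})}(\rho_j/r_j)^{10\eta}\frac{|T_\si|}{|\wt Q|}|Y|$. This is exactly the density estimate your tube-by-tube step needs.

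The rest of your outline agrees with the paper:
\begin{itemize}
\item the identity $F_Q=\sum_{\theta_j\notin\Theta_j(Q)}F_{\wt Q,\theta_j}$ on $Q$;
\item discarding heavy tubes;
\item the counting;
\item summing over caps and over $\wt Q$ via orthogonality.
\end{itemize}
One minor point: you should count cubes \emph{meeting} $T'$, not only those contained in it. The paper does this by covering $4T'$ with $O(1)$ grid tubes $S$, each of whose dilates $10S$ meets a cube $Q_0$ with $\theta_j\notin\Theta_j(Q_0)$.
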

\begin{proof}
    Fix a $\wt Q\in\cD_{\rho_j}(\nu^*)$. Noting that $|X_{r_j}|/|X_{\rho_j}|\sim |X_{r_j}\cap \wt Q|/|\wt Q|$, it suffices to prove
    \[ \frac{1}{|X_{r_j}\cap\wt Q|}\sum_{Q\in \cD_{r_j}(\nu^*), Q\subset \wt Q}\int_Q|F_Q|^2\lesssim R_\io^{O(T^{-1/3})}(\frac{\rho_j}{r_j})^{O(\eta)}\frac{1}{|\wt Q|}\int_{\wt Q}|F_{\wt Q}|^2. \]

Let $\{\tau\}$ be the same partition into $r_j/\rho_j$-caps used to define $\Theta_j$ in \eqref{def-Theta-j-Q-j}. Let 
\begin{equation}\label{defFQtau}
F_{Q,\tau}:=\sum_{T\in\T_{\io,Q,good},\theta(T)\in\tau}F_T. 
\end{equation}
In particular, $F_{Q,S^1}=F_Q$.
(Recall the definition of $\T_{\io,Q,good}$ in \eqref{defgoodtube}.)

Note that $\wh F_{Q,\tau}$ is supported in the $Rr_j/\rho_j$-cap $R\tau=\{R\xi: \xi\in\tau\}$ in $S^1_R$ (where $S^1_R$ is the circle of radius $R$), and the convex hull of $R\tau$ is a rectangle of dimensions $Rr_j/\rho_j\times R(r_j/\rho_j)^2$. Noting that $\{R\tau\}$ are $Rr_j/\rho_j$-separated and $R\sim R_\io$, then since \eqref{increspeed} implies that $Rr_j/\rho_j\ge r_j^{-1}$, we use local $L^2$-orthogonality to deduce that
\[ \int_Q |F_Q|^2\lesssim \int_Q \sum_{|\tau|=r_j/\rho_j}|F_{Q,\tau}|^2,  \]
up to rapidly decaying tails.

For each $\tau$, let $\T'_\tau$ consist of the tubes in the fixed family $\T'_{j,\tau}$ that meet $\wt Q$.
We have
\begin{equation}\label{51}
\begin{split}
    \sum_{Q\in \cD_{r_j}(\nu^*), Q\subset \wt Q}\int_Q|F_Q|^2&\lesssim \sum_{Q\in \cD_{r_j}(\nu^*), Q\subset \wt Q} \sum_{|\tau|=r_j/\rho_j} \sum_{T'\in\T'_\tau}\int_{T'}|F_{Q,\tau}|^2 1_Q\\
    &=\sum_{|\tau|=r_j/\rho_j} \sum_{T'\in\T'_\tau}\int_{T'}\sum_{\substack{Q\in \cD_{r_j}(\nu^*),\ Q\subset\wt Q\\Q\cap T'\neq\emptyset}}|F_{Q,\tau}|^2 1_Q.
    \end{split}
\end{equation}

Fix a $\tau$ and $T'\in\T'_\tau$. For $Q\in\cD_{r_j}(\nu^*)$ with $Q\subset\wt Q$ and $Q\cap T'\neq \emptyset$,
we explore the relationship between $F_{Q,\tau}$ and $F_{\wt Q,\tau}$.  Recalling Definition \ref{badtubedef}, we see that
\begin{equation}\label{TQTQtilde}
    \bigg(\T_{\io,\wt Q}\cap \{T\in\T_\io: 2T\cap Q\neq\emptyset\}\bigg) \bigcup \T_{\io,Q,j} = \T_{\io, Q}.  
\end{equation} 
We introduce a convenient notation. For a collection of tubes $\T$ and a cap $\tau\subset S^1$, we denote
\[ \T[\tau]:=\{T\in\T:\theta(T)\in\tau\}. \]
In other words, $\T[\tau]$ is the subcollection of tubes whose directions lie in $\tau$.

Restricting \eqref{TQTQtilde} to direction $\tau$, we have
\begin{equation}\label{TQtau}
    \bigg( \T_{\io,\wt Q}\cap \{T\in\T_\io[\tau]: 2T\cap Q\neq\emptyset\}\bigg) \bigcup  \T_{\io,Q,j}[\tau] = \T_{\io, Q}[\tau].  
\end{equation}
We discuss two cases.

Case 1: $\nu_{\wt Q}^*(T')>R_\io^{T^{-1/3}}(\frac{\rho_j}{r_j})^{10\eta}(r_j/\rho_j)$. Fix any $Q\in\cD_{r_j}(\nu^*)$ with $Q \cap T'\neq\emptyset$. By \eqref{def-bad-Q-r-j}, we see that the $r_j/\rho_j$-cap $\tau$ (corresponding to the direction of $T'$) is \textbf{bad for} $Q$. Hence $ \T_{\io,Q}[\tau]\supset \T_{\io,Q,j}[\tau]$ contains all tubes in $\T_\io[\tau]$ whose $2$-dilations intersect $Q$. This implies $\{T\in\T_{\io,Q,good}[\tau]:2T\cap Q\neq\emptyset\}=\emptyset$. By \eqref{defFQtau} and Lemma \ref{lemrapdec}, $|F_{Q,\tau}|1_Q=\RD(R_\io)$.

Case 2: $\nu_{\wt Q}^*(T')\le R_\io^{T^{-1/3}}(\frac{\rho_j}{r_j})^{10\eta}(r_j/\rho_j)$. For each $Q$ meeting $T'$, if $\tau\in\Theta_j(Q)$, then $|F_{Q,\tau}|1_Q=\RD(R_\io)$ as above. Otherwise, $\T_{\io,Q,j}[\tau]=\emptyset$, and \eqref{TQtau} gives
\[ \T_{\io,\wt Q}\cap \{T\in\T_\io[\tau]: 2T\cap Q\neq\emptyset\} = \T_{\io, Q}[\tau].  \]
It further implies
\[ |F_{Q,\tau}|1_Q\lesssim |F_{\wt Q,\tau}|+\RD(R_\io). \]

Call $T'\in\T'_\tau$ retained if it meets a cube $Q\in\cD_{r_j}(\nu^*)$ with $Q\subset\wt Q$ and $\tau\notin\Theta_j(Q)$, and put $Y:=X_{r_j}\cap\wt Q$. By the two cases above, \eqref{51} is bounded by (up to negligible error)
\begin{equation}\label{52}
\begin{split}
    &\sum_{|\tau|=r_j/\rho_j} \sum_{\substack{T'\in\T'_\tau\\T'\textup{ retained}}}\int_{T'}\sum_{\substack{Q\in\cD_{r_j}(\nu^*),\ Q\subset\wt Q\\Q\cap T'\neq\emptyset,\ \tau\notin\Theta_j(Q)}}|F_{\wt Q,\tau}|^2 1_Q\\
    \le&\sum_{|\tau|=r_j/\rho_j} \sum_{\substack{T'\in\T'_\tau\\T'\textup{ retained}}}\int_{T'\cap Y}|F_{\wt Q,\tau}|^2.
    \end{split}
\end{equation}

Note that the integration domain $T'\cap Y$ is morally a collection of $r_j$-cubes. We  partition $\tau=\bigsqcup \si$ of $\tau$ into $(R\rho_j)^{-1/2}$-caps $\{\si\}$. This is allowable since $(R\rho_j)^{-1/2}\le r_j/\rho_j$ by \eqref{increspeed}. We obtain
\[ F_{\wt Q,\tau}=\sum_{\si\subset\tau} F_{\wt Q,\si}. \]
By the local $L^2$-orthogonality, we have
\begin{equation}\label{T'L2}
    \int_{T'\cap Y}|F_{\wt Q,\tau}|^2\lesssim \int_{T'\cap Y}\sum_{\si\subset\tau}|F_{\wt Q,\si}|^2. 
\end{equation}
Similar to the discussion in the paragraph below \eqref{defFQtau}, we have that $\supp \wh F_{\wt Q,\si}$ is essentially contained in a $R(R\rho_j)^{-1/2}\times R(R\rho_j)^{-1}$-rectangle. By locally constant property, $|F_{\wt Q,\si}|$ is essentially constant on rectangles of dimensions $R^{-1/2}\rho_j^{1/2}\times \rho_j$ whose long direction lies in $\si$. Noting that $R^{-1/2}\rho_j^{1/2}\le r_j$ by \eqref{increspeed}, we can tile $T'$ by $R^{-1/2}\rho_j^{1/2}\times \rho_j$-tubes $T_\si$ whose direction is $\si$, denoted by $T'=\bigcup T_\si$. We therefore have
\begin{equation}\label{Fsi}
    |F_{\wt Q,\si}|^2\sim \sum_{T_\si} c_{T_\si} 1_{T_\si} 
\end{equation} 
up to rapid decaying error inside $T'$.

If $T'$ is retained, choose such a cube $Q_0$. Cover $4T'$ by $O(1)$ neighboring grid tubes $S$. Since $T'\subset10S$, each $10S$ meets $Q_0$, so $\tau\notin\Theta_j(Q_0)$ implies
\[
\nu^*_{\wt Q}(4T')\lesssim R_\io^{T^{-1/3}}(\rho_j/r_j)^{10\eta}(r_j/\rho_j).
\]
Every $r_j$-cube meeting $T'$ lies in $4T'$. Thus, by \eqref{verifybranching},
\[
\begin{aligned}
N(T')&:=\#\{Q\in\cD_{r_j}(\nu^*):Q\subset\wt Q,\ Q\cap T'\neq\emptyset\}\\
&\lesssim R_\io^{O(T^{-1/3})}(\rho_j/r_j)^{10\eta}\frac{r_j}{\rho_j}\frac{|Y|}{r_j^2}.
\end{aligned}
\]
Writing $w=R^{-1/2}\rho_j^{1/2}$, we have $|T_\si\cap Q|\lesssim wr_j$ for each $r_j$-cube $Q$. Consequently,
\[
|T_\si\cap T'\cap Y|\lesssim wr_jN(T')
\lesssim R_\io^{O(T^{-1/3})}(\rho_j/r_j)^{10\eta}\frac{|T_\si|}{|\wt Q|}|Y|.
\]
Combining with \eqref{Fsi} and plugging into \eqref{T'L2}, we see that
\[  \int_{T'\cap Y}|F_{\wt Q,\tau}|^2\lesssim R_\io^{O(T^{-1/3})}(\frac{\rho_j}{r_j})^{10\eta}\frac{|\wt Q\cap X_{r_j}|}{|\wt Q|}\int_{T'}\sum_{\si\subset\tau}|F_{\wt Q,\si}|^2. \]
This implies 
\begin{align*}
    \eqref{52}&\lesssim R_\io^{O(T^{-1/3})}(\frac{\rho_j}{r_j})^{10\eta}\frac{|\wt Q\cap X_{r_j}|}{|\wt Q|}\sum_{|\tau|=r_j/\rho_j}\int_{\wt Q}|F_{\wt Q,\tau}|^2\\
    &\lesssim R_\io^{O(T^{-1/3})}(\frac{\rho_j}{r_j})^{10\eta}\frac{|\wt Q\cap X_{r_j}|}{|\wt Q|}\int_{\wt Q}|F_{\wt Q}|^2. 
\end{align*} 
In the last inequality, we used that $F_{\wt Q,\tau}$ are orthogonal on $\wt Q$. We have shown that they are orthogonal on $Q$, so they are also orthogonal on bigger cube $\wt Q$.

This finishes the proof.
\end{proof}

\begin{lemma}\label{ballinflation3}
    Recalling \eqref{alkt}, assume 
    \begin{equation}\label{ktnu}
        |\supp\nu^*\cap Q_r|_{R_\io^{-1}}\lesssim R_\io^{O(T^{-1})}(R_\io r)^\al 
    \end{equation} 
    for all $R_\io^{-1}\le r\le r_k$ and $Q_r\in\cD_r(\nu^*)$. Then
    \begin{equation}
        \sum_{Q\in \cD_{R_\io^{-1}}(\nu^*)}\int_Q |F_Q|^2\lesssim R^{O(T^{-1})}(\frac{|\supp\nu^*|_{R_\io^{-1}}}{|\supp\nu^*|_{r_k}})^{\frac{\al}{\al+1}}\frac{1}{R_\io r_k}\sum_{\wt Q\in \cD_{r_k}(\nu^*)}\int_{\wt Q} |F_{\wt Q}|^2.
    \end{equation}
\end{lemma}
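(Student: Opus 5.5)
The plan is to pass from the $r_k$-scale to the $R_\io^{-1}$-scale inside each $\wt Q\in\cD_{r_k}(\nu^*)$ separately. Beyond scale $r_k$ no new bad caps are removed, so $F_Q=F_{\wt Q}$ on $Q$ up to $\RD(R_\io)$ whenever $Q\subset\wt Q$. Fix $\wt Q$ and write $G:=F_{\wt Q}$. We have $\supp\wh G\subset S^1_R$ with $R\sim R_\io$. Put $Y:=X_{R_\io^{-1}}\cap\wt Q$, a union of $M:=|\supp\nu^*|_{R_\io^{-1}}/|\supp\nu^*|_{r_k}$ cubes of side $R_\io^{-1}$ (by uniformity). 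The claim then reduces to the local weighted estimate
\[
\int_Y|G|^2\lessapprox M^{\frac{\al}{\al+1}}\frac{1}{R_\io r_k}\int_{\wt Q}|G|^2 .
\]
Summing over $\wt Q$ gives the lemma. The hypothesis \eqref{ktnu} says that $Y$ is $\al$-Katz--Tao: each $r$-ball with $R_\io^{-1}\le r\le r_k$ contains $\lessapprox (R_\io r)^\al$ of its cubes.

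I will prove the local estimate by wave packet decomposition at scale $R$ inside $\wt Q$. Decompose $G=\sum_\theta G_\theta$ over $R^{-1/2}$-caps $\theta$. Each $|G_\theta|$ is locally constant on $R^{-1/2}\times 1$ tubes, and inside $\wt Q$ these are $R^{-1/2}\times r_k$ slabs. By $L^2$ orthogonality on $R^{-1}$-balls, $\int_Y|G|^2\lesssim\sum_\theta\int_Y|G_\theta|^2$, and I will interpolate two bounds.

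The first bound is trivial: $\int_Y|G_\theta|^2\le\int_{\wt Q}|G_\theta|^2$. Its factor is $1$, and it is favorable when $M$ is large relative to $R_\io r_k$.

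The second bound is a Kakeya/tube count. A slab of width $R^{-1/2}$ and length $r_k$ meets $Y$ in at most $\min\{M,\ (R_\io^{1/2})^{\al}\cdots\}$ cubes. More precisely, I cover the slab by $R^{-1/2}$-balls; each contains $\lessapprox R_\io^{\al/2}$ cubes of $Y$. The locally constant property then gives $\int_{S\cap Y}|G_\theta|^2\lesssim \frac{\#(S\cap Y)R^{-2}}{|S|}\int_S|G_\theta|^2$, so everything hinges on a good bound for $\#(S\cap Y)$. The exponent $\frac{\al}{\al+1}$ suggests optimizing an intermediate scale $\lambda\in[R_\io^{-1},r_k]$. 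For slabs of width $\lambda$, the count is at most $\min\{M,\ (r_k/\lambda)(R_\io\lambda)^\al\}$. The second term follows from covering by $\lambda$-balls and using \eqref{ktnu}.

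Combining this with a bilinear/Córdoba-type $L^2$ estimate at width $\lambda$ should give a bound of the form
\[
\frac{1}{R_\io r_k}\min\Big\{M\cdot\frac{1}{R_\io\lambda}\cdot(\dots),\ (R_\io\lambda)^{\al}\Big\}.
\]
Balancing $(R_\io\lambda)^{\al+1}\approx M$ yields the factor $M^{\al/(\al+1)}/(R_\io r_k)$. Concretely, I decompose $G$ into caps of size $(R_\io\lambda)^{-1}\cdot$(appropriate) so that each piece is locally constant on $\lambda\times r_k$ tubes. Then I bound $\int_Y|G_\tau|^2$ by $\max_{T}\frac{|T\cap Y|}{|T|}\int|G_\tau|^2$, using orthogonality among the pieces.

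The main obstacle is this middle step: choosing the intermediate decomposition so that the $1/(R_\io r_k)$ gain (fraction of a long tube occupied) and the Katz--Tao count combine with exactly the exponent $\frac{\al}{\al+1}$. First I will try the two-regime argument: directions transverse versus tubes of width $\lambda$ with $\lambda=R_\io^{-1}M^{1/(\al+1)}$. I will verify that $\lambda\in[R_\io^{-1},r_k]$ follows from $M\le (R_\io r_k)^\al$. The error factors $R_\io^{O(T^{-1})}$ come directly from \eqref{ktnu} and \eqref{verifybranching}.
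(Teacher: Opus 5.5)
Your reduction matches the paper's. You fix $\wt Q$, use that no $r_j$ lies in $[R_\io^{-1},r_k)$ so that $F_Q=F_{\wt Q}$ on $Q$, and reduce to a local estimate for $G=F_{\wt Q}$ on the $M$ cubes of $Y$. After rescaling by $R$, that local estimate is essentially the weighted bound \eqref{weightedL2Ef} of Lemma \ref{lemwtedL2}. The gap is that you do not actually prove it.

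The one concrete step you give, ``by $L^2$ orthogonality on $R^{-1}$-balls, $\int_Y|G|^2\lesssim\sum_\theta\int_Y|G_\theta|^2$'', is false. An $R^{-1/2}$-cap corresponds to an arc of length $R^{1/2}$ on $S^1_R$, so the $G_\theta$ are orthogonal only on $R^{-1/2}$-balls. For $G=(d\si_R)^\vee$ you have $|G|^2\sim 1$ on the $R^{-1}$-cube at the origin, but $\sum_\theta|G_\theta|^2\sim R^{-1/2}$ there. Take $Y$ to be that single cube, so $M=1$ and $\int_{\wt Q}|G|^2\sim r_k/R$. This example saturates the lemma, and your slab-counting bound would beat it by a factor $R^{1/2}$. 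So orthogonality plus tube/Katz--Tao counting cannot see constructive interference at points of $Y$. Your proposed balancing $(R_\io\lambda)^{\al+1}\approx M$ has no inequality behind it; the ``$(\dots)$'' is exactly the missing content.

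The idea you need is the broad--narrow argument with induction on scales used in the proof of Lemma \ref{lemwtedL2}. Decompose at angular scale $K^{-1}$ with $K=R^{\e^2}$.
\begin{itemize}
\item For the broad part, Cauchy--Schwarz and the bilinear ($L^4$) restriction estimate for transverse arcs give $K^{O(1)}|X|^{1/2}\|f\|_2^2$. This suffices since $\frac12\le\frac{\al}{\al+1}$.
\item For the narrow part, pigeonhole the $K\times K^2$ rectangles according to the density of $X$ in them, then random-sample. After parabolic rescaling, the set is again Katz--Tao at scale $R/K^2$, with measure $\lesssim K^{-(1+\al)}|X|$.
\item The exponent $\frac{\al}{\al+1}$ is precisely the one for which the resulting gain $K^{-\al}$ cancels the rescaling factors, so the induction closes.
\end{itemize}
Thus the Katz--Tao hypothesis \eqref{ktnu} enters through a multiscale induction, not through a single-scale optimization. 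A bilinear estimate at a polynomial angular scale $\lambda^{-1}$ would cost a polynomial power of $\lambda$. That is why $K$ is taken to be $R^{\e^2}$ and the narrow contribution is recovered by induction instead.
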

\begin{proof}
    We fix a $\wt Q\in \cD_{r_k}(\nu^*)$. It suffices to prove
    \begin{equation}
        \sum_{Q\in \cD_{R_\io^{-1}}(\nu^*),Q\subset \wt Q}\int_Q |F_Q|^2\lesssim R^{O(T^{-1})}(\frac{|\supp\nu^*|_{R_\io^{-1}}}{|\supp\nu^*|_{r_k}})^{\frac{\al}{\al+1}}\frac{1}{R_\io r_k}\int_{\wt Q} |F_{\wt Q}|^2.
    \end{equation}
Recalling \eqref{def-T-i-Q} and noting that there is no $r_j$ in $[R_\io^{-1},r_k)$, we have
\[ \T_{\io,Q}=\T_{\io,\wt Q}\cap \{T:2T\cap Q\neq \emptyset\}. \]
This implies that
\[ |F_Q| 1_Q\lesssim |F_{\wt Q}| 1_{Q} \]
up to rapid decaying tails.
Therefore, it suffices to prove
\begin{equation}\label{rescaling}
    \int_{{X_{R_\io^{-1}}}\cap \wt Q} |F_{\wt Q}|^2\lesssim R^{O(T^{-1})}(\frac{|\supp\nu^*|_{R_\io^{-1}}}{|\supp\nu^*|_{r_k}})^{\frac{\al}{\al+1}}\frac{1}{R_\io r_k}\int_{\wt Q} |F_{\wt Q}|^2.
\end{equation}

Next, we rescale.
In the physical space, we apply the $R$-dilation followed by a translation, so that $\wt Q$ becomes $\wt B=Q_{Rr_k}$ and $\{Q\in\cD_{R^{-1}}(\nu^*):Q\subset \wt Q\}$ becomes a set of unit balls $\{B\}$ lying in $\wt B$. We denote
\[ G=F_{\wt Q}(R^{-1}x). \]
Then $G$ has Fourier transform supported in $S^1$. Under the rescaling, \eqref{rescaling} becomes
\begin{equation}
    \int_{\bigcup B}|G|^2\lesssim R^{O(T^{-1})}(\frac{|\supp\nu^*|_{R_\io^{-1}}}{|\supp\nu^*|_{r_k}})^{\frac{\al}{\al+1}}\frac{1}{R_\io r_k}\int_{\wt B} |G|^2.
\end{equation}
Since $\frac{|\supp\nu^*|_{R_\io^{-1}}}{|\supp\nu^*|_{r_k}}=\#\{Q\in\cD_{R_\io^{-1}}(\nu^*):Q\subset \wt Q\}=\#\{B\}\sim |\bigcup B|$, the above inequality is equivalent to
\begin{equation}\label{desiredL2}
    \int_{\bigcup B}|G|^2\lesssim R^{O(T^{-1})}|\bigcup B|^{\frac{\al}{\al+1}}\frac{1}{R_\io r_k}\int_{\wt B} |G|^2.
\end{equation}
This is a weighted $L^2$ estimate.
We are going to apply Lemma \ref{lemwtedL2} whose proof is postponed to Section \ref{secweightedL2}. By \eqref{ktnu}, we may partition $\bigcup B$ into $\lesssim R_\io^{O(T^{-1})}$ subsets, so that without loss of generality we may assume
\[ |\bigcup B \cap Q_r|_{1}\lesssim {r^\al} \]
for all $1\le r\le R_\io r_k$.

This means that after the $(R_\io r_k)^{-1}$-dilation, $\bigcup B$ becomes a Katz-Tao $((R_\io r_k)^{-1},\al)$-set in the sense of Definition \ref{defclassicalKT}.
Applying \eqref{weightedL2Ef} in Lemma \ref{lemwtedL2} with $X=\bigcup B, R=R_\io r_k, s=\al, \e=T^{-1}, Ef=G$, we obtain the desired estimate \eqref{desiredL2}.

This finishes the proof.
\end{proof}

\bigskip

We finish the proof of Proposition \ref{prop-our-L2-good} by combining Lemma \ref{ballinflation1}, \ref{ballinflation2}, \ref{ballinflation3} together.

\begin{proof}[Proof of Proposition \ref{prop-our-L2-good}]
By \eqref{fixR}, it suffices to show that
\[ \int_{R\sim R_\io} RdR\int|\mu_{good,\io,y}*\wh\si_R(y)|^2  d\nu^*(y)\lesssim R_\io^{-\de^4}\nu^*([0,1)^2). \]
By \eqref{sumFQ}, \eqref{averaged}, \eqref{toestbyinflation}, it suffices to show
\begin{equation}\label{toshowthis}
    \int_{R\sim R_\io} RdR \bigg(\frac{1}{|X_{R_\io^{-1}}|}\sum_{Q\in\cD_{R_\io^{-1}}(\nu^*)}\int|F_Q(y)|^2 dy\bigg) \lesssim R_\io^{-O(T^{-1})}. 
\end{equation} 
We remark that $F_Q$ depends on $R$.

Combining Lemma \ref{ballinflation1}, \ref{ballinflation2}, \ref{ballinflation3}, we see the left-hand side of \eqref{toshowthis} is bounded by
\[ (\frac{1}{\rho_1}\frac{r_1}{\rho_2}\dots\frac{r_{k-1}}{\rho_k})^2 R_\io^{kO(T^{-1/3})}(\prod_{j=1}^k\frac{\rho_j}{r_j})^{O(\eta)}\frac{|X_{r_k}|}{|X_{R_\io^{-1}}|}(\frac{|\supp\nu^*|_{R_\io^{-1}}}{|\supp\nu^*|_{r_k}})^{\frac{\al}{\al+1}}\frac{1}{R_\io r_k}\int_{R\sim R_\io} RdR \bigg(\int |F(y)|^2dy \bigg). \]
Here, $F=(\wh \mu_\io d\si_R)^\vee$ is given in Definition \ref{defF}.

By \eqref{tinyrrho}, $(\frac{1}{\rho_1}\frac{r_1}{\rho_2}\dots\frac{r_{k-1}}{\rho_k})^2\lesssim R_\io^{O(\eta)}$ is negligible. Since $k\le C(\eta)$, we can choose $T^{-1}$ sufficiently small so that $kO(T^{-1/3})\le \e$, and hence $R_\io^{kO(T^{-1/3})}\le R_\io^\e$.

Next, we estimate 
\[\int_{R\sim R_\io} RdR \bigg(\int |F(y)|^2dy \bigg)=\int_{R\sim R_\io} \int |(\wh \mu_\io d\si_R)^\vee(y)|^2RdydR.\]
Note that $d\si_R\sim \frac{1}{R}1_{S^1_R}$ is the normalized measure. The above integral is
\[ \sim \int_{R\sim R_\io}\int_{S^1_R} |\wh \mu_\io(\om)|^2 d\om R^{-1}dR\sim R_\io^{-1}\int_{|\xi|\sim R_\io} |\wh \mu_\io(\xi)|^2.   \]
Recall that the $s$-dimensional energy of the measure $\mu$ (in $\R^2$) is defined by
\[ I_s(\mu):=\int\int |x-y|^{-s}d\mu(x)d\mu(y)=c_s \int|\wh \mu(\xi)|^2|\xi|^{-2+s}. \]
Since $\mu$ satisfies the ball condition \eqref{frostcondition}, we may choose $\al'<\al$ slightly smaller, so that \eqref{cond1} and \eqref{cond2} hold for $(\al',\beta)$, and $I_{\al'}(\mu)<\infty$. For simplicity, we relabel $\al'=\al$. As a result,
\[ R_\io^{-1}\int_{|\xi|\sim R_\io} |\wh \mu_\io(\xi)|^2\lesssim R_\io^{1-\al}. \]

Combining the above discussion, the left-hand side of \eqref{toshowthis} is bounded by
\begin{equation}\label{finalterm}
    R_\io^{O(\e)}\frac{|X_{r_k}|}{|X_{R_\io^{-1}}|}(\frac{|\supp\nu^*|_{R_\io^{-1}}}{|\supp\nu^*|_{r_k}})^{\frac{\al}{\al+1}}\frac{1}{R_\io r_k} R_\io^{1-\al}. 
\end{equation} 
Now, we do the same calculation as we did to estimate \eqref{finalestimate}. We remark that the $u, \kappa$ in \eqref{finalestimate} become $\al,\beta$ in \eqref{finalterm}.

We see that \eqref{finalterm} is bounded by
\[ R_\io^{O(\e)} R_\io^{1-\al} R_\io^{\max\{\frac{\beta-\al}{2\beta-1},\frac{\beta-\al}{(\al+1)(2\beta-\al-1)}\}}. \]
Since $\al,\beta$ satisfy \eqref{cond1} and \eqref{cond2}, when $\e$ is sufficiently small, the above quantity is bounded by $R_\io^{-\de}$.

This finishes the proof of Proposition \ref{prop-our-L2-good}.
\end{proof}

\section{Proof of Proposition \ref{reducedprop}} \label{sec8}  
We prove Proposition \ref{reducedprop} in this section. This will imply Theorem \ref{maintheorem}.
Recall that at the end of Section \ref{sec5}, we denoted $\nu_\io^{i,(j)}$ by $\nu^*$ for convenience. Now we will recover the omitted indices. By \eqref{decomposemu}, we have

\begin{equation}\label{finalest}
    \begin{split}
        1&=\int\bigg( \int_{\De_y(E_1)}d_*^y(\mu) \bigg)d\nu (y)\\
&=\int\bigg( \int_{\De_y(E_1)}d_*^y(M_0\mu) \bigg)d\nu (y)+\sum_{\io\ge 1}\int\bigg( \int_{\De_y(E_1)}d_*^y(\mu_\io) \bigg)d\nu (y)+\RD(R_0)\\
&\le \bigg|\int\bigg( \int_{\De_y(E_1)}d_*^y(M_0\mu) \bigg)d\nu (y)\bigg|+\sum_{\io\ge 1}\bigg|\int\bigg( \int_{\De_y(E_1)}d_*^y(\mu_\io) \bigg)d\nu (y)\bigg|+\RD(R_0). 
    \end{split}
\end{equation}

We study the second term on the right-hand side.
Fix $\io\ge 1$. Recall that
\[ \nu=\sum_{i=1}^{N_\io}\sum_j\nu_\io^{i,(j)}+\nu_\io^{N_\io+1}. \]
Here, $\nu_\io^{i,(j)}$ were abbreviated as $\nu^*$ in the last few sections, and $\nu_\io^{N_\io+1}([0,1)^2)\le R_\io^{-\de^3}$ (see the paragraph below \eqref{MT}). 
We have
\begin{align*}
    \bigg|\int\bigg( \int_{\De_y(E_1)}d_*^y(\mu_\io) \bigg)d\nu (y)\bigg| &\le\bigg|\int\bigg( \int_{\De_y(E_1)}d_*^y(\mu_\io)-d_*^y(\mu_{good,\io,y}) \bigg)d\sum_{i=1}^{N_\io}\sum_j\nu_\io^{i,(j)}(y)\bigg| \\
    &+\bigg|\int\bigg( \int_{\De_y(E_1)}d_*^y(\mu_{good,\io,y}) \bigg)d\sum_{i=1}^{N_\io}\sum_j\nu_\io^{i,(j)}(y)\bigg|\\
    &+\bigg|\int\bigg( \int_{\De_y(E_1)}d_*^y(\mu_\io) \bigg)d\nu_\io^{N_\io+1} (y)\bigg|\\
    &=:I+II+III.
\end{align*} 

\begin{align*}
    I&\le \sum_{i=1}^{N_\io}\sum_j\bigg|\int\bigg( \int_{\De_y(E_1)}d_*^y(\mu_\io)-d_*^y(\mu_{good,\io,y}) \bigg)d\nu_\io^{i,(j)} (y)\bigg|\\
    &\le \sum_{i=1}^{N_\io}\sum_j \int \|d_*^y(\mu_{good,\io,y})-d_*^y(\mu_\io))\|_{L^1(\R)}d\nu_\io^{i,(j)}(y)\\
    (\textup{by~}\eqref{badpartest},\eqref{measure-Bad-i})&\lesssim \sum_{i=1}^{N_\io}\sum_j R_\io^{-\de}\nu_\io^{i,(j)}([0,1)^2)\\
    &\le R_\io^{-\de}.
\end{align*}

\begin{align*}
    II&\le \sum_{i=1}^{N_\io}\sum_j\bigg|\int\bigg( \int_{\De_y(E_1)}d_*^y(\mu_{good,\io,y}) \bigg)d\nu_\io^{i,(j)} (y)\bigg|\\
    &\le \sum_{i=1}^{N_\io}\sum_j\bigg|\int  |\De_y(E_1)|^{1/2}\|d_*^y(\mu_{good,\io,y})\|_{L^2} d\nu_\io^{i,(j)} (y)\bigg|\\
    &\le \sum_{i=1}^{N_\io}\sum_j\bigg(\int  |\De_y(E_1)| d\nu_\io^{i,(j)} (y)\bigg)^{1/2} \bigg(\int \|d_*^y(\mu_{good,\io,y})\|_{L^2}^2 d\nu_\io^{i,(j)} (y)\bigg)^{1/2}\\
    &\le \bigg(\sum_{i=1}^{N_\io}\sum_j\int  |\De_y(E_1)| d\nu_\io^{i,(j)} (y)\bigg)^{1/2} \bigg(\sum_{i=1}^{N_\io}\sum_j\int \|d_*^y(\mu_{good,\io,y})\|_{L^2}^2 d\nu_\io^{i,(j)} (y)\bigg)^{1/2}\\
  (\textup{by~Proposition~}\ref{prop-our-L2-good})  &\le \sup_{y\in E_2}|\De_y(E_1)|^{1/2}\bigg(\sum_{i=1}^{N_\io}\sum_jR_\io^{-\de}\nu_\io^{i,(j)} ([0,1)^2)\bigg)^{1/2}\\
  &\le {R_\io^{-\de/2}}\sup_{y\in E_2}|\De_y(E_1)|^{1/2}.
\end{align*}

\begin{align*}
    III\le \|\mu_\io\|_{L^1} \|\nu_\io^{N_\io+1}\|_{L^1}\lesssim \|\mu\|_{L^1} R_\io^{-\de^3}\le R_\io^{-\de^3}.
\end{align*}

We still need to bound the first term on the right-hand side of \eqref{finalest}.
\begin{align*}
    \bigg|\int\bigg( \int_{\De_y(E_1)}d_*^y(M_0\mu) \bigg)d\nu (y)\bigg|&\le \int |\De_y(E_1)|^{1/2}\|d_*^y(M_0\mu)\|_{L^2}d\nu(y)\\ 
    &\lesssim \sup_{y\in E_2}|\De_y(E_1)|^{1/2} \bigg(\int \|d_*^y(M_0\mu)\|_{L^2}^2d\nu(y)\bigg)^{1/2}\\
    &\lesssim R_0^{O(1)}\sup_{y\in E_2}|\De_y(E_1)|^{1/2}. 
\end{align*} 
The last inequality follows the same argument as in the proof of Proposition \ref{prop-our-L2-good}, but we only need a crude bound.

Now, plugging everything into \eqref{finalest}, we obtain
\begin{equation}
    1\le R_0^{O(1)}\sup_{y\in E_2}|\De_y(E_1)|^{1/2}+\sum_{\io\ge 1}(R_\io^{-\de}+{R_\io^{-\de/2}}\sup_{y\in E_2}|\De_y(E_1)|^{1/2}+R_\io^{-\de^3})+\RD(R_0).
\end{equation}

By choosing $R_0$ large enough, we see that
\[ \sup_{y\in E_2}|\De_y(E_1)|^{1/2}>0. \]
This finishes the proof.

\section{Weighted \texorpdfstring{$L^2$}{} estimate}\label{secweightedL2}

Let $\si_S$ be the surface measure on the unit circle $S$, and we define the extension operator $E:S\to\R^2$ as
\begin{equation}
\label{extension}
    Ef(x)=\int e^{ix\cdot\xi}f(\xi)d\si_S(\xi).
\end{equation}
Consider also its ``broad" variant:

\begin{definition}
\label{def-broad-function-intro}
Let $K\geq 1$ and $A\in\N$.
Let $\Si=\{\si\}$ be a set of 2-overlapping $K^{-1}$-arcs that covers $S$. 
Let $f_\si=f\Id_\si$.
For any $f:S\to\C$ and  $x\in \R^2$, define 
\begin{equation}
    \BR_A Ef(x):=\max_{\substack{\Si'\subset\Si,\\ \#\Si'=A}}\min_{\si\in\Si'}|Ef_\si(x)|.
\end{equation}
\end{definition}

We also recall the classical definition of a Katz-Tao $(R^{-1},s)$-set.
\begin{definition}\label{defclassicalKT}
    Let $R>1$, $s>0$. Let $X\subset B(0,1)$ be a disjoint union of $R^{-1}$-balls. We say $X$ is a Katz-Tao $(R^{-1},s,C)$-set if
    \[ |X\cap B(x,r)|\le CR^{-2} (rR)^s  \]
    for any $B(x,r)$. When $C$ is a fixed large constant, we simply say $X$ is a Katz-Tao $(R^{-1},s)$-set. 
\end{definition}

\begin{lemma}\label{lemwtedL2}
Let $X \subset B_R$ be a union of unit balls such that the $R^{-1}$-dilate of $X$ is a Katz-Tao $(R^{-1},s)$-set. 
Then for all $\e>0$,
\begin{equation}
\label{desired-esti}
    \|Ef\|_{L^2(X)}^2 \leq C_\e R^\e |X|^{\frac{s}{s+1}} \|f\|_2^2.
\end{equation}
Consequently, we obtain
\begin{equation}\label{weightedL2Ef}
    \|Ef\|_{L^2(X)}^2 \leq C_\e R^\e |X|^{\frac{s}{s+1}}R^{-1} \|Ef\|_{L^2(B(0,2R))}^2.
\end{equation}
\end{lemma}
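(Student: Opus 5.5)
We aim to prove \eqref{desired-esti}: $\|Ef\|_{L^2(X)}^2\lessapprox |X|^{\frac{s}{s+1}}\|f\|_2^2$ for $X\subset B_R$ a union of unit balls that is Katz-Tao of dimension $s$ after rescaling. We interpolate between two trivial bounds at an intermediate scale $r$. The second statement \eqref{weightedL2Ef} will then follow from the first by the standard identity $\|Ef\|_{L^2(B(0,2R))}^2\sim R\|f\|_2^2$ (Plancherel on the circle, since a $2R$-ball sees each frequency on $S$ with weight $\sim R$).

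\textbf{Wave packet decomposition at a scale $r\in[1,R]$.} Partition $S$ into $r^{-1}$-arcs $\tau$ and write $Ef=\sum_\tau Ef_\tau$. On each $r$-ball $B_r$, the pieces $Ef_\tau$ are locally orthogonal, since their frequency supports are $r^{-1}$-separated. Each $|Ef_\tau|$ is essentially constant on $r\times r^2$ planks $P$ in direction $\tau$, with $\sum_P\int_{P}|Ef_\tau|^2\lesssim r^2\|f_\tau\|_2^2$; this is the $L^2$ mass carried through an $r^2$-long region.

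\textbf{Two competing estimates.} First, a trivial bound: $X$ meets $\le |X|$ unit balls, and $|Ef|\le\|f\|_1\lesssim\|f\|_2$, which is too crude. We instead do the following. Within a plank $P$ of dimensions $r\times r^2$, the Katz-Tao condition gives $|X\cap P|\lesssim r^{s}\cdot\#\{r\text{-balls}\}\le r^s\cdot r=r^{1+s}$ (covering $P$ by $r$ balls of radius $r$, or by one ball of radius $r^2$ giving $r^{2s}$). Using the locally constant property on planks,
\[\int_{X\cap P}|Ef_\tau|^2\lesssim \frac{|X\cap P|}{|P|}\int_P|Ef_\tau|^2 .\]
Then $\|Ef\|_{L^2(X)}^2\lesssim\sum_\tau\sum_P \frac{|X\cap P|}{r^3}\int_P|Ef_\tau|^2$, which by local orthogonality is controlled by $\min(r^{1+s},|X|)\,r^{-3}\cdot r^2\|f\|_2^2$. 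More carefully, one must choose $r$ so that $X$ is handled at scale $r^2$; the balancing I expect is $r^{s+1}\approx|X|$, giving $|X|^{s/(s+1)}$. The bookkeeping: planks of length $r^2$ versus the ambient $R$ requires iterating or choosing planks of length $L$ and width $L^{1/2}$, with $|X\cap P|\lesssim L^{1/2}\cdot L^{s/2}$ crude, or better $|X\cap P|\le\min(|X|, L^{(1+s)/2})$. The factor $\int_P|Ef_\tau|^2\lesssim L\|f_\tau\|_2^2$ per plank column, while the locally constant density gives $|X\cap P|/|P|$, $|P|=L^{3/2}$. Summing, we get $\|Ef\|^2_{L^2(X)}\lesssim \max_P |X\cap P|\,L^{-1/2}\|f\|_2^2$, plus the trivial alternative $\|Ef\|_{L^2(X)}^2\le |X|\|f\|_2^2$ (the case $L=1$). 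Choose $L$ with $L^{(1+s)/2}\sim|X|$. Then $\max|X\cap P|\le |X|$ and $|X|L^{-1/2}=|X|^{1-\frac1{1+s}}=|X|^{\frac{s}{s+1}}$, as desired.

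\textbf{The main obstacle} is making the locally constant step rigorous: for each $P$ we must bound $\int_{X\cap P}|Ef_\tau|^2$ by $\frac{|X\cap P|}{|P|}\int_{P}|Ef_\tau|^2$. This holds for the maximal/averaged version via a weight adapted to $P$, but $|Ef_\tau|^2$ is only essentially constant on dual planks, and summing over $\tau$ uses orthogonality on $X$, which is not available. I would handle orthogonality by bounding pointwise $|Ef|^2\lesssim$ (number of arcs)$\sum_\tau|Ef_\tau|^2$ only when needed, but preferably by working with $\sum_\tau|Ef_\tau|^2$ after applying local $L^2$ orthogonality on each unit ball. The unit balls are of scale $1$, finer than $L^{-1/2}$-separated frequencies, so Córdoba-type orthogonality fails. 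If that fails, I would instead use a bilinear/broad--narrow split via $\BR_A$ (as set up in Definition \ref{def-broad-function-intro}). The narrow part is handled by induction on scales through parabolic rescaling. The broad part is handled by the bilinear transversal estimate $\int_X|Ef_1Ef_2|\lesssim$ (transversal tubes counting with Katz-Tao), losing only $R^\e$.
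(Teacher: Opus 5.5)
There is a genuine gap. Your main single-scale argument fails at exactly the step you flagged, and that step is not a technicality: it carries all of the content. The bound $\|Ef\|_{L^2(X)}^2\lesssim\sum_\tau\|Ef_\tau\|_{L^2(X)}^2$ over $L^{-1/2}$-arcs $\tau$ is false on unions of unit balls. If you grant it, you do not even need the planks. Indeed, $\|Ef_\tau\|_\infty^2\le\|f_\tau\|_1^2\lesssim L^{-1/2}\|f_\tau\|_2^2$ would give $\|Ef\|_{L^2(X)}^2\lesssim|X|L^{-1/2}\|f\|_2^2$ for every $L\le R$, hence $|X|R^{-1/2}\|f\|_2^2$. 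That is false for $X=B(0,1)$ and $f\equiv1$, where $\|Ef\|_{L^2(X)}^2\sim1$.

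Correspondingly, nothing in your computation produces the choice $L^{(1+s)/2}\sim|X|$. It is read off from the extremal example: a single $L^{-1/2}$-arc wave packet whose $L^{1/2}\times L$ plank is filled by $X$ at maximal Katz--Tao density. The Katz--Tao hypothesis never enters your final bound, since you only use $\max_P|X\cap P|\le|X|$. Without orthogonality, Cauchy--Schwarz over the $\sim L^{1/2}$ arcs loses exactly $L^{1/2}$ and returns the trivial bound $|X|\|f\|_2^2$. Some genuinely transversal input is needed; even a $TT^*$ Schur test with $|\widehat{\si_S}(z)|\lesssim(1+|z|)^{-1/2}$ only gives $|X|^{1-\frac{1}{2s}}$, which is worse than $|X|^{\frac{s}{s+1}}$ for $s>1$. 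What replaces the missing orthogonality is the pointwise broad--narrow inequality $|Ef|^2\lesssim_A\sum_\sigma|Ef_\sigma|^2+K^{O(1)}|\BR_A Ef|^2$, which needs no orthogonality at all.

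Your fallback (broad--narrow via $\BR_A$, narrow part by parabolic rescaling and induction, broad part bilinear) is the paper's architecture, but as written it omits the step that makes the induction close. After the rescaling $\mathcal L_\sigma$, the image of $X$ is neither a union of unit balls nor Katz--Tao, so the induction hypothesis cannot be applied directly. The paper proceeds in four steps, with $K=R^{\e^2}$:
\begin{enumerate}
\item Cover $X$ by $K\times K^2$ rectangles $B_\sigma$ in direction $\sigma$. Pigeonhole to those with $|X\cap B_\sigma|\sim m$ and comparable $\|Ef_\sigma\|_{L^2(B_\sigma)}$.
\item Use local constancy of $|Ef_\sigma|$ on each $B_\sigma$ to replace $X$ by the union of these rectangles, at cost $mK^{-3}$.
\item The Katz--Tao condition gives $\#\{B_\sigma\subset Q_r\}\lesssim K^{1+s}r^sm^{-1}$ for $rK\times rK^2$ rectangles $Q_r$. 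Keeping a random fraction $\approx mK^{-(1+s)}$ of the rectangles then yields a set $\tilde X_\sigma'$ whose image $\mathcal L_\sigma(\tilde X_\sigma')$ satisfies the hypothesis at scale $R/K^2$ and has measure $\lesssim K^{-(1+s)}|X|$.
\item This gives $\int_X|Ef_\sigma|^2\lessapprox K^{s-1}\int_{\mathcal L_\sigma(\tilde X_\sigma')}|Eg|^2$. The induction hypothesis at scale $R/K^2$ returns $K^{1-s}K^{-2\e}C_\e R^\e|X|^{\frac{s}{s+1}}\|f_\sigma\|_2^2$. The exact cancellation of $K^{s-1}$ against $K^{1-s}$ is where the exponent $\frac{s}{s+1}$ is forced.
\end{enumerate}
In the broad part no Katz--Tao tube counting is needed. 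Cauchy--Schwarz and the bilinear $L^2$ estimate give $K^{O(1)}|X|^{1/2}\|f\|_2^2$. This is at most $K^{O(1)}|X|^{\frac{s}{s+1}}\|f\|_2^2$ because $|X|\gtrsim1$ and $s\ge1$, which holds in the application where $s=\al>1$; your proposal does not register this restriction. Your deduction of \eqref{weightedL2Ef} from \eqref{desired-esti} via $\|Ef\|_{L^2(B(0,2R))}^2\gtrsim R\|f\|_2^2$ is fine.
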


\begin{proof}
The proof relies on the standard broad-narrow reduction and the bilinear restriction theorem.
See \cite{Wu-weighted-L2} for a similar argument.
For a given $\e$ in \eqref{desired-esti}, pick $K=R^{\e^2}$ and $A=8$ in Definition \ref{def-broad-function-intro}.
Clearly, we have
\begin{equation}
\label{broad-narrow}
    \|Ef\|_{L^2(X)}^2 \lesssim  \sum_{\sigma} \int_X |Ef_{\sigma}|^2 + K^{O(1)} \int_X |\text{Br}_8 Ef|^2.
\end{equation}
For a $K^{-1}$-interval $\si\subset[-1,1]$ centered at $\xi_\si$, let $\mathcal{L}_\si$ be the parabolic rescaling
\begin{equation}
\label{parabolic-rescaling}
    \mathcal{L}_\si(x_1,x_2)=(K^{-1}(x_1+2x_2\xi_\si), K^{-2}x_2).
\end{equation}

We distinguish two cases.

\textbf{Case 1.} 
Suppose the first term in \eqref{broad-narrow} dominates. 
Fix $\sigma$. Let $\mathcal{B}_\sigma$ be a family of finite-overlapping $K \times K^2$-rectangles oriented with direction $\sigma$ that forms a cover of $X$. 
By dyadic pigeonholing, there exist $m \in [1,K^3]$ and a set $\mathcal{B}_{\sigma,m} \subset \mathcal{B}_\sigma$ such that the following is true:
\begin{enumerate}
    \item For all $B_\sigma \in \mathcal{B}_{\sigma,m}$, $|X \cap B_\sigma| \sim m$.
    \item We have, since $|Ef_\sigma|$ is essentially constant on each $B_\sigma \in \mathcal{B}_{\sigma,m}$,
\[
    \int_X |Ef_\sigma|^2 \approx \int_{X \cap (\bigcup_{\mathcal{B}_{\sigma,s}}B)} |Ef_\sigma|^2 \lesssim mK^{-3} \int_{\bigcup_{\mathcal{B}_{\sigma,m}}B} |Ef_\sigma|^2.
\]
\end{enumerate}

Thus, either we can easily conclude the lemma, or by dyadic pigeonholing, there exists a union of $K \times K^2$-rectangles $\tilde{X}_\sigma \subset \bigcup_{\mathcal{B}_{\sigma,m}}$ so that
\begin{enumerate}
    \item $\|Ef_\sigma\|_{L^2(B_\sigma)}$  are about the same for all $K \times K^2$-rectangles $B_\sigma \subset \tilde{X}_\sigma$.
    \item We have
\end{enumerate}
\begin{equation}
\label{Ef-sigma}
    \int_X |Ef_\sigma|^2 \lessapprox mK^{-3} \int_{\tilde{X}_\sigma} |Ef_\sigma|^2.
\end{equation}
For any $r \in [1, R/K^2]$, let $Q_r$ be an $rK \times rK^2$-rectangle oriented with direction $\sigma$. Since the $R^{-1}$-dilate of $X$ is a Katz-Tao $(R^{-1},s)$-set, we know that
\[
    |X \cap Q_r| = \sum_{B_r \subset Q_r} |X \cap B_r| \lesssim K (Kr)^s,
\]
where $\{B_r\}$ are finite-overlapping $rK$-balls. 
As a result,
\begin{equation} 
\label{upperbound-ball}
    \# \{ B_\sigma \in \mathcal{B}_{\sigma,m} : B_\sigma \subset Q_r \} \lesssim K^{1+s}r^sm^{-1}.
\end{equation}

By random sampling, there exists a union of $K \times K^2$-rectangles $\tilde{X}_\sigma' \subset \tilde{X}_\sigma$ such that
\begin{enumerate}
    \item $|\tilde{X}_\sigma'| \approx m K^{-(1+s)} |\tilde{X}_\sigma|$.
    \item Recall \eqref{parabolic-rescaling}.
    Inside a unit ball, the $(R/K^2)^{-1}$-dilate of $\mathcal{L}_\sigma(\tilde{X}_\sigma')$ is a Katz-Tao $((R/K^2)^{-1},s)$-set.
\end{enumerate}
Let $g(\xi) = f\big(K^{-1}(\xi - \xi_\sigma)\big)$, where $\xi_\sigma$ is the center of $\sigma$. 
Put the above information back into \eqref{Ef-sigma} so that
\begin{equation}
\label{para-res}
    \int_X |Ef_\sigma|^2 \approx K^{s-2} \int_{\tilde{X}_\sigma'} |Ef_\sigma|^2 = K^{s-1} \int_{\mathcal{L}_\sigma(\tilde{X}_\sigma')} |Eg|^2.
\end{equation}

Since $ |\mathcal{L}_\sigma(\tilde{X}_\sigma')|=K^{-(1+s)}|X|$, apply \eqref{desired-esti} at scale $R/K^2$ to get 
\begin{align}
\label{after-induction}
    \int_{\mathcal{L}_\sigma(\tilde{X}_\sigma')} |Eg|^2&\le C_\varepsilon (R/K^2)^\varepsilon |\mathcal{L}_\sigma(\tilde{X}_\sigma')|^{\frac{s}{s+1}}\|g\|_2^2 \\ \nonumber
    &= K^{-2\varepsilon} K^{1-s}\cdot  C_\varepsilon R^\varepsilon|X|^{\frac{s}{s+1}} \|f_\sigma\|_2^2.
\end{align}
Put \eqref{after-induction} and \eqref{para-res} back to \eqref{broad-narrow} and sum over $\sigma$ to obtain 
\begin{equation*}
    \|Ef\|_{L^2(X)}^2\lessapprox K^{-2\varepsilon} C_\varepsilon R^\varepsilon {|X|^{\frac{s}{s+1}}} \sum_\sigma \|f_\sigma\|_2^2\le C_\varepsilon R^\varepsilon {|X|^{\frac{s}{s+1}}} \|f\|_2^2.
\end{equation*}

\textbf{Case 2.} Suppose the second term in \eqref{broad-narrow} dominates.
Then, by Definition \ref{def-broad-function-intro}, 
\begin{equation}
\nonumber
    \|Ef\|_{L^2(X)}^2\lesssim K^{O(1)} \|\mathrm{Br}_8 Ef\|_{L^2(X)}^2\lesssim R^{O(\varepsilon^2)} \sum_{\substack{\sigma_1, \sigma_2\in\Sigma,\\
    \dist(\sigma_1, \sigma_2)\geq K^{-1}}}\int_X|Ef_{\sigma_1}Ef_{\sigma_2}|.
\end{equation}
Via Cauchy-Schwarz, we have
\begin{equation}
\nonumber
    \sum_{\substack{\sigma_1, \sigma_2\in\Sigma,\\
    \dist(\sigma_1, \sigma_2)\geq K^{-1}}}\int_X|Ef_{\sigma_1}Ef_{\sigma_2}|\lesssim |X|^{1/2}\sum_{\substack{\sigma_1, \sigma_2\in\Sigma,\\
    \dist(\sigma_1, \sigma_2)\geq K^{-1}}}\Big(\int_X|Ef_{\sigma_1}Ef_{\sigma_2}|^2\Big)^{1/2},
\end{equation}
which, by using the bilinear restriction theorem, is bounded above by
\begin{equation}
\nonumber
    K^{O(1)}|X|^{1/2}\sum_{\sigma_1, \sigma_2\in\Sigma}\|f_{\sigma_1}\|_2\|f_{\sigma_2}\|_2\lesssim K^{O(1)}|X|^{1/2}\|f\|_2^2\lesssim K^{O(1)}|X|^{{\frac{s}{s+1}}}\|f\|_2^2.
\end{equation}
Since $K=R^{\e^2}$, the above calculations show that
\[
    \|Ef\|_{L^2(X)}^2\lesssim R^{O(\e^2)}|X|^{{\frac{s}{s+1}}}\|f\|_2^2.  \qedhere
\]

\end{proof}

%***************************************

\newpage

\bibliographystyle{abbrv}
\bibliography{refs}
%***************************************

\end{document}